\definecolor{linkred}{rgb}{0.6,0,0}
\definecolor{linkblue}{rgb}{0,0,0.6}
\newcounter{mainTheoremCounter}
\newtheoremstyle{mainTheorem}{}{}{\addtolength{\leftskip}{0em}\itshape}{}{\bfseries}{}{.5em}{\thmname{#1} {#2}}
\theoremstyle{mainTheorem}
\newtheorem{mainTheorem}[mainTheoremCounter]{Theorem}
\newcounter{mainDefinitionCounter}
\newtheoremstyle{mainDefinition}{}{}{}{}{\bfseries}{}{.5em}{\thmname{#1} {#2}}
\theoremstyle{mainDefinition}
\newtheorem{mainDefinition}[mainDefinitionCounter]{Definition}
\newcounter{mainRemarkCounter}
\newtheoremstyle{mainRemark}{}{}{}{}{\bfseries}{}{.5em}{\thmname{#1} {#2}}
\theoremstyle{mainRemark}
\newtheorem{mainRemark}[mainRemarkCounter]{Remark}
\newcounter{appendixTheoremCounter}
\newtheoremstyle{appendixTheorem}{}{}{\addtolength{\leftskip}{0em}\itshape}{}{\bfseries}{}{.5em}{\thmname{#1} {#2}}
\theoremstyle{appendixTheorem}
\theoremstyle{plain}  %plain=italics
\newtheorem{theorem}{Theorem}[subsection]
\newtheorem{lemma}[theorem]{Lemma}
\newtheorem{corollary}[theorem]{Corollary}
\theoremstyle{definition} %definition=italics
\newtheorem{definition}[theorem]{Definition}
\newtheorem{re}[theorem]{}
\newtheorem{remark}[theorem]{Remark}
\newcommand{\SmNeg}{\mbox{\scriptsize-}}
\newcommand{\C}{\mathbb{C}}
\newcommand{\bbL}{\mathbb{bbL}}
\renewcommand{\P}{\mathbb{P}}
\newcommand{\calC}{\mathcal{C}}
\newcommand{\calE}{\mathcal{E}}
\newcommand{\calF}{\mathcal{F}}
\newcommand{\calG}{\mathcal{G}}
\newcommand{\calH}{\mathcal{H}}
\newcommand{\calR}{\mathcal{R}}
\newcommand{\calT}{\mathcal{T}}
\newcommand{\calX}{\mathcal{X}}
\newcommand{\calY}{\mathcal{Y}}
\newcommand{\frakM}{\mathfrak{M}}
\renewcommand{\O}{\mathcal{O}}
\newcommand{\Spec}{\mathrm{Spec}}
\newcommand{\Sym}{\mathrm{Sym}}
\newcommand{\Supp}{\mathrm{Supp}}
\renewcommand{\div}{\mathrm{div}}
\newcommand{\M}{\mathcal{M}}
\newcommand{\Mbar}{\overline{\mathcal{M}}}
\renewcommand{\L}{\mathcal{L}}
\newcommand{\smallfamily}[3]{\begin{array}{c}\xymatrix@=1em{{#1}\ar[d]^{#3} \\ {#2}}\end{array}}
\begin{document}
\setlength{\parindent}{0cm}

\title{The Moduli Space of Stable Maps with Divisible Ramification}

\author{Oliver Leigh}
\address{Oliver Leigh, School of Mathematics and Statistics, The University of Melbourne, Victoria, 3010, Australia.}
\address{Oliver Leigh, Department of Mathematics, The University of British Columbia, Vancouver, BC, V6T 1Z2, Canada.}
\curraddr{}
\email{oleigh@math.ubc.ca}
\thanks{}

\subjclass[2010]{14D23, 14H10, 14N35}

\keywords{}
\date{}
\dedicatory{}

\begin{abstract}
We develop a theory for stable maps to curves with divisible ramification. For a fixed integer $r>0$, we show that the condition of every ramification locus being divisible by $r$ is equivalent to the existence of an $r$th root of a canonical section. We consider this condition in regards to both absolute and relative stable maps and construct natural moduli spaces in these situations. We construct an analogue of the Fantechi-Pandharipande branch morphism and when the domain curves are genus zero we construct a virtual fundamental class. This theory is anticipated to have applications to $r$-spin Hurwitz theory. In particular it is expected to provide a proof of the $r$-spin ELSV formula [\hyperlink{SSZ_abstract}{SSZ'15, Conj. 1.4}] when used with virtual localisation.  
\end{abstract}

\maketitle

\section*{Introduction}

Consider a smooth curve $X$ and the moduli space parameterising degree $d$ maps $f:C\rightarrow X$  where $C$ is a smooth curve of genus $g$. This space is denoted by $\M_{g}(X,d)$ and point $[f]\in \M_{g}(X,d)$ has an associated exact sequence
\begin{align}
0\longrightarrow  f^*\Omega_{X}\otimes \Omega_{C}^\vee \overset{\delta^\vee}{\longrightarrow} \O_{C} \longrightarrow \O_{R_f} \longrightarrow 0 \label{ram_exact_seq}
\end{align}
where $R_f$ is the ramification divisor. If $[f]$ is a generic point then $R_f$ is the union of disjoint points on $C$. In other words, $f$ has simple ramification everywhere. \\

As an alternatively, we consider a space $\M^{_{1/r}}_g(X, d)$ where a generic point $[f]$ gives a ramification divisor of the form  $R_f = r\cdot p_1 + \cdots + r \cdot p_m$ for disjoint points $p_1, \ldots ,p_m\in C$. Specifically, we define $\M^{_{1/r}}_g(X, d)$ as the  following sub-moduli space of $\M_{g}(X,d)$:
\[
\M^{_{1/r}}_g(X, d) = \Big\{~ \big[f:C\rightarrow X \big] \in \M_g(X, d)~\Big|~  \mbox{$R_f = r\cdot D$ for some $D\in\mathrm{Div}(C)$} ~\Big\} /\sim .
\]

In this article we construct a natural compactification of $\M^{_{1/r}}_g(X, d)$. We develop the enumerative geometry of this space by constructing a virtual fundamental class in the case $g=0$ and by constructing a branch morphism. \\

The above construction of the ramification divisor relies on the domain curve $C$ being smooth. This means that $\Omega_{C}$ is locally free and that $df:f^*\Omega_X\rightarrow \Omega_C$ is injective.  If $C$ is allowed to be singular either of these may be false and we no longer have a straightforward definition of ramification. This leads us to rephrase the moduli problem using $r$th roots of $\delta$ which is defined in (\ref{ram_exact_seq}).  One can show that $\M^{_{1/r}}_g(X, d)$ is naturally isomorphic to:
\[
\left\{  \big[f\hspace{-0.1em}:\hspace{-0.1em}C\hspace{-0.1em}\rightarrow\hspace{-0.15em} X \big] \in \M_g(X, d)  \left| 
\begin{array}{l} 
\mbox{There is a line bundle $L$ on $C$, $\sigma \in H^0(L)$ and} \\ 
\mbox{an isom. $L^{\otimes r}\overset{e}{\rightarrow}\omega_C\otimes f^*\omega_X$ with $e(\sigma^r) = \delta$.}
\end{array}\hspace{-0.3em}\right\}\right. /\sim .
\]

We now have the moduli problem in a form which can be naturally compactified. First we note that for nodal domain curves there is a natural morphism $\Omega_C \rightarrow \omega_C$. This is combined with the differential map  $df: f^*\Omega_{X} \rightarrow \Omega_C$ to obtain a morphism which we denote by
\begin{align}
\delta: \O_C \longrightarrow \omega_C \otimes f^* \omega^\vee_{X}.  \label{delta_defn_1}
\end{align}\\\vspace{-0.5cm}

\begin{mainDefinition}\label{mainModuliDefinition} Denote by $\Mbar^{_{1/r}}_g(X, d)$ the moduli stack parameterising morphisms $f: C\rightarrow X$ where
				\begin{enumerate}
					\item $C$ is a genus $g$ $r$-prestable curve (a stack such that the coarse space $\overline{C}$ is a prestable curve, where points mapping nodes of $\overline{C}$ are balanced $r$-orbifold points, and ${C}^{\mathrm{sm}} \cong \overline{C}^{\mathrm{sm}}$);
					\item $f$ is a morphism such that the induced morphism $\overline{f}:\overline{C}\rightarrow X$ on the course space is a stable map;
					\item there exists a line bundle $L$ on $C$, an isomorphism $ e: L^{\otimes r} \overset{\sim}{\rightarrow} \omega_C \otimes f^* \omega^\vee_{X}$, and a morphism $\sigma: \O_C \rightarrow L$ such that $e(\sigma^r) = \delta$, where $\delta$ is defined in (\ref{delta_defn_1}).\\
				\end{enumerate}
					
\end{mainDefinition}

\begin{mainRemark}\label{main_relative_remark}
Throughout the paper we will also be considering the same moduli problem in the context of stable maps relative to a point $x\in X$ and a partition $\mu$ of $d>0$. The moduli space of relative stable maps $\Mbar_g(X,\mu)$ generically parameterises maps where the pre-image of $x$ is smooth and locally has monodromy given by $\mu$. We will leave the specifics of this moduli problem until section \ref{stable_and_relative_overview_section}, however all of the following results will hold when $\Mbar^{_{1/r}}_g(X, d)$ is replaced by $\Mbar^{_{1/r}}_g(X, \mu)$, and $2g-2-d(2g_X-2)$ is replaced by $2g-2+l(\mu)+|\mu|(1-2g_X)$.\\
\end{mainRemark}

\begin{mainRemark}
The $r$-prestable curves in definition \ref{mainModuliDefinition} arise naturally when taking $r$th roots of line bundles on nodal curves \cite{AbraJarvis,Chiodo_StableTwisted}. We review this in section \ref{r-prestable_curves_section}.  \\
\end{mainRemark}

\begin{mainTheorem}\label{construction_and_proper_theorem}
$\Mbar^{_{1/r}}_g(X, d)$ is a proper DM stack. It is non-empty only when $r$ divides $2g-2-d(2g_X-2)$. The natural forgetful map
\[
\chi:\Mbar^{_{1/r}}_g(X, d) \longrightarrow \Mbar^{}_g(X, d)
\]
is both flat and of relative dimension $0$ onto its image. It is an immersion when restricted to $\M^{_{1/r}}_g(X, d)$.\\
\end{mainTheorem}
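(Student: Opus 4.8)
The plan is to build $\Mbar^{_{1/r}}_g(X,d)$ by a sequence of relative constructions over $\Mbar_g(X,d)$, each step of which is well-understood, and then read off the stated properties. First I would pass from $\Mbar_g(X,d)$ to the stack of $r$-prestable maps: over each stable map $[\,\overline{f}:\overline{C}\to X\,]$ one attaches a balanced $r$-orbifold structure at every node of $\overline{C}$, which is the standard construction of Abramovich--Jarvis / Chiodo recalled in section~\ref{r-prestable_curves_section}. This is a proper, quasi-finite (in fact the coarse map is an isomorphism on the smooth locus and a gerbe-like modification over the node strata) morphism, and it produces the universal $r$-prestable curve $\pi:\calC\to\mathfrak{S}$ together with the line bundle $N:=\omega_{\calC/\mathfrak{S}}\otimes f^*\omega_X^\vee$ and the canonical section $\delta:\O_\calC\to N$ of (\ref{delta_defn_1}). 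Next I would form the stack of $r$th roots of the pair $(N,\delta)$: this is exactly the target of the $r$th-root-with-section construction — on an $r$-prestable curve, taking an $r$th root of a line bundle is representable, finite and flat (Chiodo), and imposing $e(\sigma^r)=\delta$ cuts out a closed substack of the total space of the universal root bundle. I would take $\Mbar^{_{1/r}}_g(X,d)$ to be this stack, check it satisfies Definition~\ref{mainModuliDefinition} by a standard descent/universal-property argument, and deduce it is a DM stack because it is finite (hence representable and unramified-ish) over the DM stack $\mathfrak{S}$, which is itself proper over $\Mbar_g(X,d)$.

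For properness I would use the valuative criterion. Given a DVR $R$ with fraction field $K$ and a family over $\Spec K$, properness of $\Mbar_g(X,d)$ gives (after finite base change) a limiting stable map; the $r$-prestable enhancement extends by the properness of $\mathfrak{S}\to\Mbar_g(X,d)$; and the root bundle together with its section extends uniquely after a further finite base change by the properness of the stack of $r$th roots (this is where the orbifold structure at the nodes is essential — it is precisely what makes the limit of an $r$th root again an $r$th root rather than forcing a base change that destroys the family). Uniqueness of the limit follows from separatedness at each of these stages. The quasi-compactness and finite-type claims are inherited from $\Mbar_g(X,d)$ through the finite-type morphisms above, giving that $\Mbar^{_{1/r}}_g(X,d)$ is a proper DM stack.

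For the statement about $\chi$: flatness and relative dimension $0$ onto the image follow because $\chi$ factors as $\Mbar^{_{1/r}}_g(X,d)\to\mathfrak{S}\to\Mbar_g(X,d)$, where the first map is finite and flat (root stacks with a section condition — I would need to check that the section condition $e(\sigma^r)=\delta$ cuts things out flatly, which reduces to a local computation with the equation $x^r=\delta$ in the total space of a line bundle) and the second is an isomorphism onto its image away from the boundary and otherwise a flat, relative-dimension-$0$ modification of the nodal locus. The non-emptiness/divisibility statement is a degree count: a line bundle $L$ with $L^{\otimes r}\cong\omega_C\otimes f^*\omega_X^\vee$ exists only if $r\mid\deg(\omega_C\otimes f^*\omega_X^\vee)=2g-2-d(2g_X-2)$, and conversely on a general smooth curve such roots exist. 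Finally, that $\chi$ restricted to $\M^{_{1/r}}_g(X,d)$ is an immersion: on the open locus of smooth domains with $df$ injective, the exact sequence (\ref{ram_exact_seq}) shows $\delta$ is injective, so an $r$th root $(L,\sigma)$ is determined up to unique isomorphism by the divisor $\tfrac{1}{r}R_f$ (the section $\sigma$ vanishes on $\tfrac1r R_f$ and $L\cong\O_C(\tfrac1r R_f)$), hence $\chi$ is a monomorphism there; combined with the identification of $\M^{_{1/r}}_g(X,d)$ with a locally closed subset of $\Mbar_g(X,d)$ from the introduction, this gives an immersion.

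\emph{Expected main obstacle.} The delicate point is the extension of the root-bundle-with-section across a node in the valuative criterion: one must show that allowing balanced $r$-orbifold points at the nodes is exactly enough to make the limit of $(L,\sigma,e)$ exist (after only a finite, controlled base change) and again satisfy $e(\sigma^r)=\delta$, including that the limiting $\sigma$ does not identically vanish on a component. This is the heart of why the $r$-prestable curves of Definition~\ref{mainModuliDefinition} appear, and handling the interaction between the vanishing order of $\delta$ along boundary divisors and the orbifold structure is where the real work lies; everything else is assembling known representability, flatness and properness results for root stacks over $\Mbar_g(X,d)$.
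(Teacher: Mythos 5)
Your overall architecture — pass to $r$-prestable maps, invoke Chiodo's finite \'{e}tale root stack for $(L,e)$, and then impose $e(\sigma^r)=\delta$ as a closed condition inside the total space of sections of the universal root — is the same as the paper's (there the three stages are $\M^r$, $\M^{[r]}$, and the fibre product of $\bm{i'}:\M^{[r]}\to\mathsf{Tot}\,\bm{\pi}_*\L^r$ with the power map $\bm{\tau}$). Where you diverge is properness: you propose a valuative-criterion argument and correctly flag the extension of $(L,\sigma,e)$ across a degenerating node as the hard step. The paper avoids this entirely by a structural argument: it shows the power map $\bm{\tau}:\mathsf{Tot}\,\bm{\pi}_*\L\to\mathsf{Tot}\,\bm{\pi}_*\L^r$ factors as a finite $\mathbb{Z}_r$-quotient followed by a closed immersion (closedness of the image being checked via the graph of $\bm{\tau}$ and the fact that abelian cones are affine, hence closed, over their base), so $\bm{\tau}$ is proper and $\M^{[1/r]}$ is proper by base change from the proper $\M^{[r]}$. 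Your route can be made to work — the extension of $(L,e)$ is exactly Chiodo's properness, and $\sigma$ is then determined up to $\mu_r$ on each component by $\delta$ — but the paper's factorization lemma packages all of this with no limit analysis, and it is also what later identifies $\M^{(1/r)}$ as a closed substack of $\M^{[r]}$ and $\M^{1/r}$ as a closed substack of $\M^r$.

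There is, however, a genuine gap in your treatment of flatness. You assert that the first map in the factorization $\Mbar^{_{1/r}}_g(X,d)\to\M^r\to\M$ is ``finite and flat,'' with the section condition $e(\sigma^r)=\delta$ to be checked to ``cut things out flatly.'' It does not: the locus of $\M^r$ over which an $r$th root of $\delta$ exists is a \emph{proper closed substack} (this is the whole content of Theorem~\ref{br_theorem}.\ref{br_theorem_moduli_points} — only maps all of whose special loci have order divisible by $r$ survive), so $\M^{1/r}\to\M^r$ is a closed immersion, not a flat cover, and no local computation with $x^r=\delta$ will make it flat over all of $\M^r$. Consequently the composite $\chi$ is not flat onto $\M$, and your argument proves the wrong statement. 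The flatness claimed in the theorem is only \emph{onto the image}: one observes that $\M^r\to\M$ is flat, surjective of degree $1$ and relative dimension $0$, that the image $Z=\chi(\M^{1/r})\subset\M$ is closed, and that $\M^{1/r}$ is the full preimage of $Z$ under $\M^r\to\M$ (the $r$-prestable enhancement of a given stable map is unique), so $\M^{1/r}\to Z$ is the base change of the flat map $\M^r\to\M$ along $Z\hookrightarrow\M$. You should replace your flatness paragraph with this base-change argument. The remaining points — DM-ness via finiteness over $\M^r$, the divisibility of $2g-2-d(2g_X-2)$ by a degree count, and the immersion on the smooth locus because an injective $\delta$ determines $(L,\sigma)$ up to the finite $\mu_r$-ambiguity via the divisor $\tfrac{1}{r}R_f$ — are fine and consistent with the paper.
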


The image of $\chi$ has an explicit point-theoretic description.  Let $f:C\rightarrow X$ be a stable map and consider the locus in $C$ where $f$ is not \'{e}tale. Following  \cite{Vakil_Enum, GraVakil} a connected component of this locus is called a \textit{special locus}. A special locus will be one of the following: 
\begin{enumerate}[label=(\alph*)]
\item A smooth point of $C$ where $f$ is locally of the form $z\mapsto z^{a+1}$ with $a\in\mathbb{N}$.
\item A node of $C$ such that on each branch $f$ is locally like $z\mapsto z^{a_i}$ with $a_i\in\mathbb{N}$.
\item A genus $g'$ component $B$ of $C$ where $f|_{B}$ is constant and on the branches of $C$ meeting $B$ the map  $f$ is locally of the form $z\mapsto z^{a_i}$ with $a_i\in\mathbb{N}$.
\end{enumerate}

Note that a slightly different definition is used for the relative case (see remark \ref{relative_special_loci}). Now, following  \cite{Vakil_Enum, GraVakil} again, we define a \textit{ramification order} (or sometimes simply \textit{order}) for each type of special locus by:
\begin{enumerate}[label=(\alph*)]
\item $a$.  \hspace{1.25cm} (b) $a_1+a_2$. \hspace{1.25cm} (c) $2g'-2 + \sum(a_i+1)$.
\end{enumerate}
This gives us an extended concept of ramification. There is also an extended concept of branching constructed in \cite{FantechiPand} which agrees with the ramification order assigned to special loci. Specifically there is a well defined morphism of stacks which agrees with the classical definition of branching on the smooth locus: 
\[
\begin{array}{cccc}
br : &\M_g(X,d) &\longrightarrow &\Sym^{2g-2-d(2g_X-2)} X. 
\end{array}
\]

\begin{figure}
  \centering
      \includegraphics[width=0.7\textwidth]{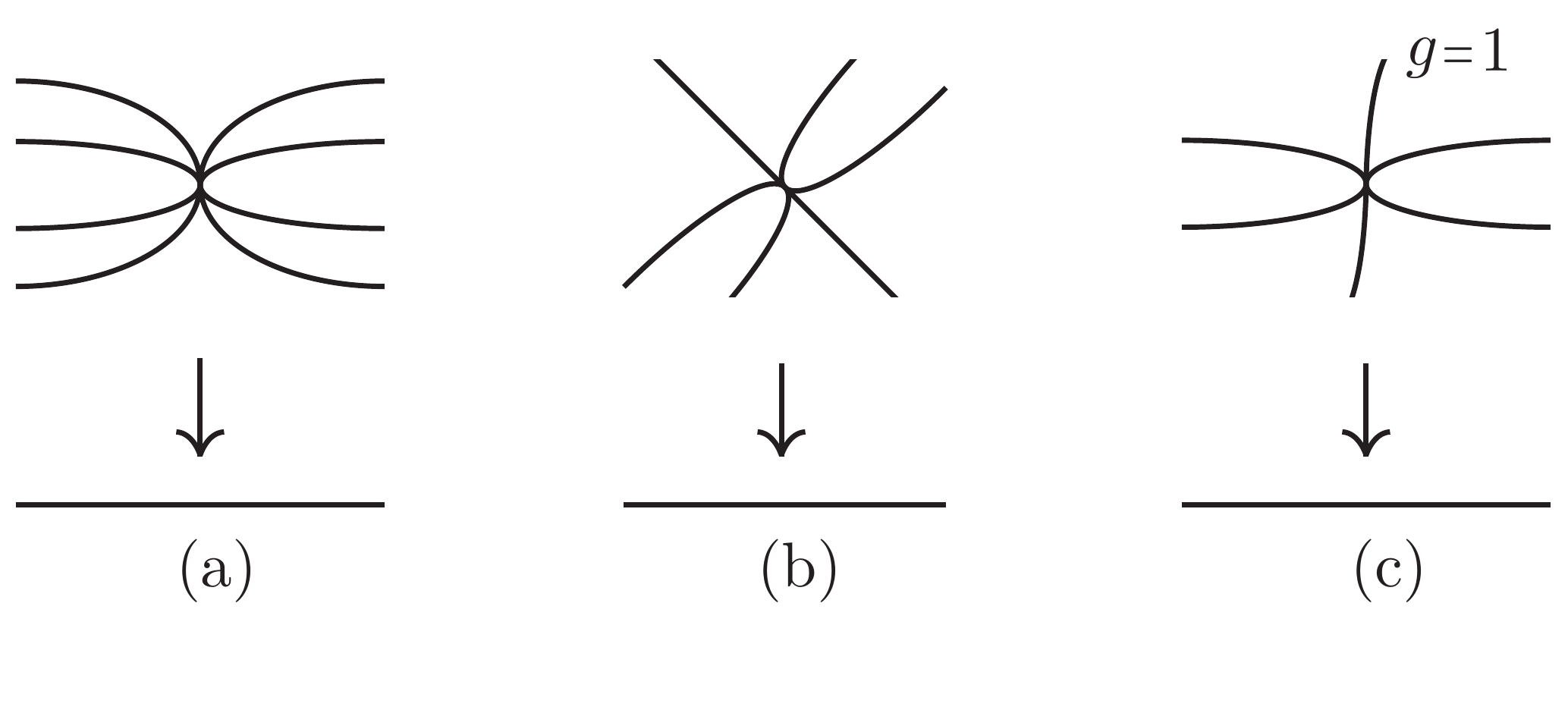}\vspace{-1.25em}
  \caption{Loci with ramification order 3. (a) A smooth point where the map is locally like $z\mapsto z^{3+1}$. (b) A node where the map is locally like $z\mapsto z^{2}$ on one branch and $z\mapsto z$ on the other. (c) A genus one component meeting its complement at a node, where the map is constant on the sub-curve and locally like $z\mapsto z^{2}$ on the complement. }
\end{figure}

\begin{mainTheorem}\label{br_theorem}
The objects is $\Mbar^{_{1/r}}_g(X, d)$ have the following ramification and branching properties:
\begin{enumerate}
\item\label{br_theorem_moduli_points}
The closed points in the image of $\tau:\Mbar^{_{1/r}}_g(X, d) \longrightarrow \Mbar^{}_g(X, d)$ are the closed points of  $\Mbar^{}_g(X, d)$ with the property: 
\begin{itemize}
\item[]\hspace{-1.5em} ``Every special locus of the associated map has order divisible by $r$''.\\\vspace{-0.25cm}
\end{itemize}
\item \label{br_theorem_branch_morphism}
There is a morphism of stacks
\[
\mathtt{br} : \Mbar^{_{1/r}}_g(X,d) \longrightarrow \Sym^{\frac{1}{r}(2g-2-d(2g_X-2))} X 
\]
that commutes with the branch morphism of \cite{FantechiPand} via the diagram
\[
\xymatrix@R=1.5em{
 \Mbar^{_{1/r}}_g(X,d) \ar[r]^{\mathtt{br}\hspace{2.1em}}\ar[d]_{\chi} &\Sym^{\frac{1}{r}(2g-2-d(2g_X-2))} X \ar[d]^{\Delta}\\
 \Mbar_g(X,d) \ar[r]^{br\hspace{2.1em}} & \Sym^{2g-2-d(2g_X-2)} X  
}
\]
where $\Delta$ is defined by $\sum_i x_i \mapsto \sum_i rx_i$.
\\\vspace{-0.25cm}
\end{enumerate}
\end{mainTheorem}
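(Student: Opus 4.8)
The plan is to prove part~(1) by a local analysis at each special locus, and part~(2) by comparing the construction of $\mathtt{br}$ with the Fantechi--Pandharipande construction at the level of the universal curve.

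For part~(1), I would argue as follows. By Theorem~\ref{construction_and_proper_theorem}, a closed point $[f:C\rightarrow X]$ of $\Mbar_g(X,d)$ lies in the image of $\chi$ precisely when the section $\delta:\O_{\overline C}\to\omega_{\overline C}\otimes \overline f^*\omega_X^\vee$ admits an $r$th root on the orbifold curve $C$ obtained by adjoining balanced $r$-orbifold structure at the nodes; equivalently, the effective Cartier divisor $\div(\delta)$ on $C$ is divisible by $r$ in $\mathrm{Div}(C)$. The key point is to compute the multiplicity of $\div(\delta)$ along each special locus and check that it equals the ramification order assigned in the list (a)--(c), after accounting for the orbifold structure at nodes. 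At a smooth point of type (a), $df$ is locally $z\mapsto z^a\,dz$ up to units, so $\mathrm{ord}(\delta)=a$. At a node of type (b), one works on each branch, where $\delta$ is built from $df$ twisted by the comparison $\Omega_C\to\omega_C$; the orbifold chart at a balanced $r$-point contributes so that the total order along the node is $a_1+a_2$ — here I would invoke the review of $r$-prestable curves and the local description of $\omega_C$ in section~\ref{r-prestable_curves_section}. At a contracted genus $g'$ component of type (c), $\overline f^*\omega_X$ is trivial on $B$ and $\delta|_B$ is a section of $\omega_{\overline C}|_B$; since $\deg(\omega_{\overline C}|_B)=2g'-2+\#\{\text{nodes on }B\}$ and the branch orders contribute $\sum a_i$, the total order is $2g'-2+\sum(a_i+1)$. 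Divisibility of $\div(\delta)$ by $r$ on all of $C$ is then equivalent to each of these local orders being divisible by $r$, which is exactly the stated condition. The main obstacle here is the careful bookkeeping at nodes: one must track how the $r$-orbifold structure interacts with the morphism $\Omega_C\to\omega_C$ and confirm that an $r$th root exists globally iff it exists in a neighbourhood of each special locus — this is a local-to-global statement that I expect follows from the fact that away from special loci $\delta$ is a nowhere-vanishing section, hence trivially an $r$th power.

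For part~(2), the morphism $br$ of \cite{FantechiPand} is constructed from the universal ramification/branch data: over $\Mbar_g(X,d)$ one has the universal curve $\pi:\mathcal C\to\Mbar_g(X,d)$ with universal map $F:\mathcal C\to X$, and $br$ is obtained by pushing forward the divisor $\div(\delta_{\mathrm{univ}})\subset\mathcal C$ (suitably interpreted via a norm/Fitting-ideal construction) along $F$ and then to the symmetric product. On $\Mbar^{_{1/r}}_g(X,d)$, the defining data give, Zariski-locally on the universal $r$-prestable curve $\mathcal C^{_{1/r}}$, a section $\sigma_{\mathrm{univ}}$ with $\sigma_{\mathrm{univ}}^{\,r}=\delta_{\mathrm{univ}}$, so the divisor $\div(\sigma_{\mathrm{univ}})$ is a canonically defined effective divisor with $r\cdot\div(\sigma_{\mathrm{univ}})=\div(\delta_{\mathrm{univ}})$ (the $r$th root of the line bundle $L$ is unique up to the $\mu_r$-gerbe structure, but $\div(\sigma)$ is well-defined independent of that choice). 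Pushing $\div(\sigma_{\mathrm{univ}})$ forward along $F$ and to $\Sym^{\frac1r(2g-2-d(2g_X-2))}X$ defines $\mathtt{br}$; that it is a morphism of stacks (rather than just a set map) follows because the construction is functorial in families, using properness of $\chi$ and the fact that $F$ restricted to $\div(\sigma_{\mathrm{univ}})$ is finite of the expected degree $\tfrac1r(2g-2-d(2g_X-2))$ onto its image — here one checks the degree by the Riemann--Hurwitz count $\deg\div(\delta)=2g-2-d(2g_X-2)$ together with $\div(\delta)=r\cdot\div(\sigma)$. Commutativity of the square is then immediate: multiplying $\div(\sigma_{\mathrm{univ}})$ by $r$ recovers $\div(\delta_{\mathrm{univ}})$ on the coarse curve, which is exactly the cycle whose pushforward defines $br\circ\chi$, and this is precisely the statement $\Delta\circ\mathtt{br}=br\circ\chi$. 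The only subtlety is matching conventions between the "divisor of $\delta$" picture and the branch morphism of \cite{FantechiPand}, which is defined via a determinant-of-cohomology / Fitting ideal; I would address this by recalling that on the locus of stable maps with reduced ramification the two agree tautologically, and both sides extend uniquely by properness and separatedness of the symmetric product.
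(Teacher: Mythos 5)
Your overall strategy for part (1) — a local analysis of $\delta$ at each type of special locus — matches the paper's, but both parts of your proposal lean on an object that does not exist on the boundary: an effective Cartier divisor $\div(\delta)$ (or $\div(\sigma)$) on the universal curve. On a contracted component $B$ the differential $df$ vanishes identically, so $\delta|_B=0$ and hence $\sigma|_B=0$; the zero locus of $\sigma$ then contains a whole component and is not a divisor on $C$. This is precisely why Fantechi--Pandharipande (and the paper) do not push forward a divisor from the curve, but instead apply Mumford's determinant construction to the derived pushforward $RF_*[\O_C\overset{\sigma}{\to}L]$ on $X\times S$, which is a perfect \emph{torsion} complex even when $\sigma$ vanishes on components. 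The actual content of part (2) in the paper is the identity $\div\big(RF_*[\sigma^r]\big)=r\cdot\div\big(RF_*[\sigma]\big)$, proved by induction using a distinguished triangle for the composition $\sigma^{n}\circ\sigma$ together with a nontrivial lemma that tensoring the complex by $L$ does not change $\div$ (which itself needs auxiliary regular sections). Effectivity and the degree of $\mathtt{br}$ are then \emph{deduced} from the known effectivity of $\div(RF_*[\delta])$, not computed by Riemann--Hurwitz on the curve. Your fallback — agree on the reduced-ramification locus and extend by properness — is also unavailable: the paper points out that the smooth-domain locus $\M^{_{1/r}}_g(X,d)$ can be empty while $\Mbar^{_{1/r}}_g(X,d)$ is not, so there is no dense locus on which the two constructions tautologically coincide.

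For part (1) the same issue resurfaces at type (c) loci: divisibility there is not the order of vanishing of a divisor but the statement that $\deg\big(L|_B^{\otimes r}\big)=\deg\omega_B(\textstyle\sum q_i)$, corrected by the node multiplicities of $L$ at the orbifold points, is divisible by $r$; your computation lands on the right number but for the wrong reason. Your local-to-global step is also asserted rather than proved: knowing $\delta$ is an $r$th power near each special locus and nowhere-vanishing elsewhere does not by itself produce a global line bundle $L$ with $L^{\otimes r}\cong\omega_C\otimes f^*\omega_X^\vee$ and a global section $\sigma$ with $e(\sigma^r)=\delta$. The paper handles this with an explicit tensor decomposition $\delta=\delta_{\mathrm{sm}}\otimes\delta_{\mathrm{n}}\otimes\delta_{\mathrm{cn}}$ and constructs a root of each factor separately. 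To repair your argument you would need to (i) replace divisors on $C$ by the $\div(RF_*[\,\cdot\,])$ construction throughout and prove the multiplicativity $\div(RF_*[\sigma^r])=r\,\div(RF_*[\sigma])$, and (ii) give an actual glueing construction for the converse direction of part (1).
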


Just like for regular stable maps $\Mbar_g(X, d)$, the smooth-domain locus $\M^{_{1/r}}_g(X, d)$ can be empty while $\Mbar^{_{1/r}}_g(X, d)$ is non-empty. For explicit examples consider degree one maps to $\P^1$ with $g>0$. \\

The properties of $\Mbar^{_{1/r}}_g(X, d)$ can be quite different to those of $\Mbar_g(X, d)$. For example, if we consider genus zero domains we have that $\Mbar_0(X, d)$ is smooth, but in general $\Mbar^{_{1/r}}_0(X, d)$ is not. An explicit example of this is $\Mbar^{_{1/3}}_0(\P^1, 4)$, which is not smooth  as it contains components of dimensions $2$ and $3$. However, we do have the existence of a virtual fundamental class for $g=0$. \\

\begin{mainTheorem}\label{pot_theorem}
$\Mbar^{_{1/r}}_0(\P^1, d)$ has a natural perfect obstruction theory giving a virtual fundamental class of dimension $\frac{1}{r}(2d-2) =\frac{1}{r}\mathrm{virdim}(\Mbar_0(\P^1, d))$. \\
\end{mainTheorem}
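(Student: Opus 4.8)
The plan is to construct the perfect obstruction theory relative to a smooth base, so that absoluteness of the class follows formally. Let me think about what the "tautological" family looks like. We have the universal $r$-prestable curve $\pi : \mathcal{C} \to \Mbar^{_{1/r}}_0(\P^1, d)$ with universal map $f : \mathcal{C} \to \P^1$, a line bundle $L$ on $\mathcal{C}$, an isomorphism $e : L^{\otimes r} \xrightarrow{\sim} \omega_{\mathcal{C}} \otimes f^*\omega_{\P^1}^\vee$, and a section $\sigma : \O_\mathcal{C} \to L$ with $e(\sigma^r) = \delta$. The natural thing is to factor $\chi$ (or rather $\tau$, the map to $\Mbar_0(\P^1,d)$) through the moduli stack $\mathfrak{M}^{r}$ of $r$-prestable genus-zero curves together with an $r$th root line bundle of a fixed target line bundle. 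Over $\Mbar_0(\P^1, d)$ — which is smooth — the line bundle $\omega_{\mathcal{C}} \otimes f^*\omega_{\P^1}^\vee$ is a well-defined family, and the stack of its $r$th roots is étale (this is the Abramovich–Jarvis/Chiodo theory reviewed in the paper). So after passing to this root stack $\widetilde{\M}$, which is smooth of the same dimension as $\Mbar_0(\P^1,d)$, our space $\Mbar^{_{1/r}}_0(\P^1,d)$ sits inside $\widetilde{\M}$ as the locus where the canonical section $\delta$ of $L^{\otimes r}$ admits an $r$th root in $H^0(L)$.

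First I would make precise the deformation theory of "$\delta$ admits an $r$th root". The section $\sigma$ is a point of the relative $H^0$; the condition $\sigma^r = \delta$ (under $e$) is a closed condition, so $\Mbar^{_{1/r}}_0 \to \widetilde{\M}$ is represented, via the universal section, by the zero locus of a section of a bundle-like object — more precisely it is the fibre product of $\widetilde{\M}$ with the "$r$th power" map $\pi_* L \to \pi_* L^{\otimes r}$ over the section $\delta$. The issue is that $\pi_* L$ and $\pi_* L^{\otimes r}$ are not vector bundles (the genus-zero fibres can have $h^1 \ne 0$ on unstable components, and the $r$-orbifold structure complicates pushforward), so I would instead work with the perfect complexes $R\pi_* L$ and $R\pi_* L^{\otimes r}$ on $\widetilde{\M}$. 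The key point, special to $g = 0$, is that on each component of a genus-zero $r$-prestable curve the relevant line bundles have vanishing $h^1$ once one accounts for the orbifold structure — or at worst $R\pi_*$ is concentrated in degree $0$ after a suitable twist — so that $R\pi_* L$ is (quasi-isomorphic to) a vector bundle, or at least has Tor-amplitude in $[0,0]$. I would verify the numerology: $\deg L = \tfrac1r(2g-2 - d(2g_X - 2)) = \tfrac1r(2d-2)$ on the whole curve when $g = g_X = 0$, and $h^0(L) - h^1(L) = \deg L + 1$ on a genus-zero curve, giving $\mathrm{rk}\, \pi_* L = \tfrac1r(2d-2) + 1$ and $\mathrm{rk}\, \pi_* L^{\otimes r} = (2d-2)+1$; the expected dimension count $\dim \widetilde{\M} + \mathrm{rk}(\pi_* L) - \mathrm{rk}(\pi_* L^{\otimes r}) = \dim \Mbar_0(\P^1,d) - \tfrac{r-1}{r}(2d-2) = \tfrac1r(2d-2)$ then matches the claimed virtual dimension, which is a strong consistency check that the construction is the right one.

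The perfect obstruction theory then comes out as follows. Relative to $\widetilde{\M}$, the cotangent complex of $\Mbar^{_{1/r}}_0$ is governed by the derivative of the $r$th power map $s \mapsto s^{r}$, i.e.\ multiplication by $r \sigma^{r-1}$, as a map of complexes $R\pi_* L \to R\pi_* L^{\otimes r}$; the relative POT is the two-term complex $[\,R\pi_* L \xrightarrow{r\sigma^{r-1}} R\pi_* L^{\otimes r}\,]$ (suitably shifted, living in degrees $[-1,0]$), which is perfect because $g=0$ makes both terms perfect of amplitude $[0,0]$. Then I would compose with the (trivial, since $\widetilde{\M}$ is smooth) obstruction theory of $\widetilde{\M}$, or rather use the standard triangle relating relative and absolute cotangent complexes, to upgrade this to an absolute perfect obstruction theory $E^\bullet \to L_{\Mbar^{_{1/r}}_0}$. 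Behrend–Fantechi then produces the virtual class, of dimension $\mathrm{rk}\, E^\bullet = \dim\widetilde{\M} - \big(\mathrm{rk}\, R\pi_*L^{\otimes r} - \mathrm{rk}\, R\pi_* L\big) = \tfrac1r(2d-2)$.

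The main obstacle — and the step I would spend the most care on — is establishing that $R\pi_* L$ and $R\pi_* L^{\otimes r}$ are honestly perfect complexes of Tor-amplitude $[0,0]$ (i.e.\ vector bundles, locally) on $\widetilde{\M}$, despite the presence of contracted genus-zero components and $r$-orbifold points where $L$ need not be pulled back from the coarse curve. On a nodal genus-zero curve a line bundle can fail to have $h^1 = 0$ (e.g.\ negative degree on some component), so one genuinely needs to use the structure of $L$ here: the defining property $e(\sigma^r) = \delta$ constrains $L$ to be, roughly, an $r$th root of $\omega_{\mathcal{C}}^{\log}$-type bundle, and on the contracted components $f$ is constant so $L^{\otimes r} \cong \omega$ restricted there, whose $r$th roots on a genus-zero orbicurve do have controllable cohomology. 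I would handle this by a local analysis at the nodes using the orbifold normalisation sequence (as in Chiodo's work on $r$th roots), reducing to components where the degree is pinned down, and then invoke cohomology-and-base-change; if full amplitude $[0,0]$ fails on the nose, the fallback is to present $R\pi_*$ globally by a two-term complex of vector bundles $[A \to B]$ and build the POT from the induced map on these presentations, which still yields a perfect obstruction theory of the correct rank. A secondary technical point is checking that the obstruction theory map is genuinely a morphism to the cotangent complex (not just that the complex has the right rank) — this is the usual verification that the deformation-obstruction spaces of the moduli problem are computed by $H^0, H^1$ of the two-term complex above, which I would do by unwinding the deformation theory of the quadruple $(C, f, L, \sigma)$ over $\widetilde{\M}$ and identifying it with deformations/obstructions of the section equation $\sigma^r = \delta$.
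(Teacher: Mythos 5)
Your architecture matches the paper's almost exactly: work relative to the stack of $r$-stable maps equipped with an $r$th root of the ramification bundle (smooth in genus zero, being \'etale over $\Mbar_0(\P^1,d)$ after passing to $r$-prestable curves), realise $\Mbar^{_{1/r}}_0(\P^1,d)$ as the fibre product of the $r$th power map on (total spaces of) pushforwards of $\L$ and $\L^r$ with the section defined by $\delta$, read off the relative obstruction theory from the derivative $r\sigma^{r-1}$ of the power map, and descend to an absolute theory via the cone construction using smoothness of the base. The paper does precisely this, using the Chang--Li ``maps with fields'' formalism for the abelian cones and a functorial diagram of cotangent-complex triangles to verify the obstruction-theory axioms; your ``secondary technical point'' is genuine work but is handled there by quoting \cite{ChangLi}. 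Your dimension count is also the paper's.

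Where you genuinely diverge is the perfectness argument, and this is where your proposal has a gap. The paper never tries to show $R\pi_*\L$ and $R\pi_*\L^{r}$ are bundles. It Serre-dualises: the relative obstruction complex is $R\rho_*$ of $[\L^{-r}\otimes\omega_{\rho}\to\L^{-1}\otimes\omega_{\rho}]$, and the defining isomorphism $\L^{r}\cong\omega_{\rho}\otimes f^*\omega_X^{\vee}$ rewrites this as $[f^*\omega_X\xrightarrow{\sigma^{r-1}}f^*\omega_X\otimes \L^{r-1}]$. The crucial input is the fibrewise vanishing $H^0(C_z,\ker\sigma^{r-1})=0$: the section $\sigma$ vanishes only on components contracted by $f$, where $f^*\omega_X$ is trivial, so a kernel section is a constant on each contracted subcurve vanishing at its attaching nodes, hence zero. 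This makes the relative theory perfect in amplitude $[-1,0]$ in \emph{every} genus; $g=0$ enters only to make the base smooth so that the absolute theory stays in amplitude $[-1,0]$. Your primary route instead requires $h^1(L_z)=h^1(L^r_z)=0$ on all genus-zero fibres; this is plausible (every component of the tree carries positive degree, using stability of the map for contracted components), but it is a separate, genus-zero-specific verification that you only sketch. More seriously, your fallback is not sound as stated: presenting each of $R\pi_*\L$ and $R\pi_*\L^{r}$ by a two-term complex of bundles and taking the induced map yields a complex of the correct rank but a priori of amplitude $[-2,0]$; without the surjectivity of $H^1(L_z)\to H^1(L^r_z)$ (equivalently, the paper's dual vanishing) it is not an obstruction theory, and ``correct rank'' cannot substitute for that cohomological input.
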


The moduli space $\Mbar^{_{1/r}}_g(\P^1, \mu)$ has expected applications to $r$-spin Hurwitz theory. For example, in genus $0$ using both theorems \ref{br_theorem} and \ref{pot_theorem} we have the following natural intersection 
\begin{align}
\int_{[\Mbar^{_{1/r}}_0(\P^1, \mu)]^{\mathrm{vir}}} \mathtt{br}^* H^{\frac{1}{r}(l(\mu)+|\mu|-2)} \label{hurwitz_integral}
\end{align}
where $H$ is the hyperplane class in $\Sym^{\frac{1}{r}(l(\mu)+|\mu|-2)} \P^1 \cong \P^{\frac{1}{r}(l(\mu)+|\mu|-2)}$. This is a direct analogue of the characterisation of simple Hurwitz numbers given in \cite[Prop. 2]{FantechiPand}. This was the first step towards a proof via virtual localisation of the ELSV formula. After applying the virtual localisation techniques of \cite{GraberPand}, (\ref{hurwitz_integral}) is expected to be related to the $r$-ELSV formula of \cite{SSZ_rELSV,BKLPS}. \\

In the case where $r = l(\mu) +|\mu|-2$ the space $\Mbar^{_{1/r}}_0(\P^1, \mu)$ has virtual dimension $1$. These spaces are characterised by having exactly one free special locus of order $r$. In this situation the intersections given in (\ref{hurwitz_integral}) are expected to have a direct relation to the completion coefficients and one-point invariants of \cite{OkounkovPand_Comp}:
\[
\int_{[\Mbar_{0,1}(\P^1, \mu)]^{\mathrm{vir}}} \psi_1^r ev_1^*[pt].
\]\\

The paper is structured as follows: 
\begin{itemize}[leftmargin=0cm]
\item[] \textbf{Section \ref{background_section}:} Review the necessary theory of stable maps, $r$-prestable curves and line bundles on twisted curves required for the construction of $\Mbar^{_{1/r}}_g(X, d)$ and its relative version.
\item[] \textbf{Section \ref{construction_section}:} Extend the theory of roots of line bundles to the space of stable maps, construct the moduli space $\Mbar^{_{1/r}}_g(X, d)$ and  then prove theorem \ref{construction_and_proper_theorem}.
\item[] \textbf{Section \ref{br_section}:} Consider properties of $\Mbar^{_{1/r}}_g(X, d)$ related to branching and ramification while proving theorem \ref{br_theorem}. 
\item[] \textbf{Section \ref{pot_section}:} Consider the cotangent complex of $\Mbar^{_{1/r}}_g(X, d)$ and related properties while proving theorem \ref{pot_theorem}. \\
\end{itemize}

\textbf{Conventions} All stacks and schemes and schemes are over $\C$. By \textit{local picture} we will mean the following. Let $f:X\rightarrow Y$ and $g:U\rightarrow V$ be morphisms of stacks.  The \textit{local picture} of $f$ at $x\in X$  is the same as the local picture of $g$ at $u \in U$ if:
\begin{itemize}
\item There is an isomorphism between the strict henselization $f^{\mathrm{sh}}:X^{\mathrm{sh}}\rightarrow Y^{\mathrm{sh}}$ of $f$ at $x$ and the strict henselization $g^{\mathrm{sh}}:U^{\mathrm{sh}}\rightarrow V^{\mathrm{sh}}$ of $g$ at $u$.
\end{itemize}

Throughout the paper we will consider both absolute and relative stable maps. The theory will be similar so we introduce the following simplifying notation.\\

\hypertarget{notation_main}{\textbf{Notation:}} 
\begin{itemize}
\item  $\M$ is either $\Mbar_{g}(X,d)$ or $\Mbar_{g}(X,\mu)$ for $g \geq 0$, $d>0$ and $\mu$ a partition of $d$.
\item $\calC\rightarrow \M$ be the associated universal curve. 
\item  If $\M$ is $\Mbar_{g}(X,d)$ (resp. $\Mbar_{g}(X,\mu)$) then $\frakM$ is $\frakM_g$ (resp. $\frakM_{g,l(\mu)}$).
\item The expected number of order $r$ special loci in the generic case is denoted by $m$. When $\M=\Mbar_g(X,d)$  we have $m = \frac{1}{r} (2g-2-d(2g_X-2))$ and when $\M=\Mbar_g(X,\mu)$ we have $m = \frac{1}{r}  (2g-2+l(\mu)+|\mu|(1-2g_X))$.
\item Whenever we introduce a new related space, the notation will carry through to the original spaces. For $\Mbar_{g}(X,\mu)$ the two key spaces are:
\begin{itemize}
\item $\M^{[r]} = \Mbar^{\,[r]}_{g}(X,\mu)$ and $\calC^{[r]} = \overline{\calC}^{\,[r]}_{g}(X,\mu)$ defined in \ref{def_stable_maps_with_roots_of_the_ramification_bundle}.\\
\item $\M^{[\frac{1}{r}]} = \Mbar^{_{[1/r]}}_{g}(X,\mu)$ and $\calC^{[\frac{1}{r}]} = \overline{\calC}^{_{[1/r]}}_{g}(X,\mu)$ are defined in \ref{main_moduli_spaces_definitions}.\ref{full_space_part}.\\
\end{itemize}
And the associated spaces are:
\begin{itemize}
\item $\M^{r} = \Mbar^{\,r}_{g}(X,\mu)$ and $\calC^{r} = \overline{\calC}^{\,r}_{g}(X,\mu)$ defined in \ref{moduli_with_roots_of_ramification_section}.\\
\item $\M^{{\frac{1}{r}, \calE}} = \Mbar^{_{\frac{1}{r}, \calE}}_{g}(X,\mu)$ and $\calC^{{\frac{1}{r}, \calE}} = \overline{\calC}^{_{\frac{1}{r}, \calE}}_{g}(X,\mu)$ are defined in \ref{roots_of bundle_moduli_definition}.\\
\item $\M^{\frac{1}{r}} = \Mbar^{_{1/r}}_{g}(X,\mu)$ and $\calC^{1/r} = \overline{\calC}^{_{1/r}}_{g}(X,\mu)$ are defined in \ref{main_moduli_spaces_definitions}.\ref{reduced_space_part}.\\
\end{itemize}
\end{itemize}

\textbf{Acknowledgements:} I would like to thank Jim Bryan and Paul Norbury for their knowledge, wisdom and support. I would also like to thank Kai Behrend, Emily Clader, Barbara Fantechi, Felix Janda, Martijn Kool, Georg Oberdieck, J\o rgen Rennemo, and Dustin Ross for very helpful conversations. \\

Parts of this paper were completed during visits to the Max Planck Institute for Mathematics, Ludwig Maximilian University of Munich, the Bernoulli Center, and the Henri Poincaré Institute. I would like to thank them for providing stimulating and welcoming work environments. 

\section{Review of Stable Maps and \texorpdfstring{$r$}{r}-Stable Curves}
\label{background_section}

\subsection{Stable Maps and Relative Stable Maps}\label{stable_and_relative_overview_section}

For the rest of this article we will set $X$ to be a non-singular curve. Recall that a stable map $f : C \rightarrow X$ is a degree $d$ morphism from a genus $g$ prestable curve to $X$ which has no infinitesimal automorphisms. We denote by $\Mbar_g(X,d)$ the moduli stack of these objects. Specifically this is the groupoid containing the objects: 
\[
\xi = \big(~
\pi: C \rightarrow S
,~~f : C \rightarrow X
~\big)
\]
where $\pi$ is a proper flat morphism and for each geometric point $p\in S$ we have $f_p: C_p \rightarrow X$ is a degree $d$ genus $g$ stable map to $X$. A morphism $\xi_1\rightarrow \xi_2$  in $\Mbar_g(X,d)$ between objects $\xi_i = (\pi_i:C_i\rightarrow S_i, f_i:C_i\rightarrow X)$ is a commutative diagram where the left square is cartesian:
\[
\xymatrix@=1.5em{
 S_1\ar[d] &C_1 \ar[l]_{\pi_1}\ar[d]\ar[r]^{f_1}&\ar@{=}[d]X\\
S_2  &\ar[l]_{\pi_2} C_2\ar[r]^{f_2}&X
}
\]\\

Let $x$ be a geometric point of $X$ and $\mu$ a partition of $d>0$. As we mentioned in remark \ref{main_relative_remark}, we will also be considering the moduli problem in the case of stable maps relative to $(x,\mu)$. We use the algebro-geometric definition of this moduli space and its obstruction theory provided in \cite{li1,li2}.\\

The goal of relative stable maps is to parameterise maps where the pre-image of $x$ lies in the smooth locus of $C$ and where the map has monodromy given by $\mu$ locally above $x$. However, this condition will not give a compact space. The solution provided in \cite{li1} is to allow the target to degenerate in a controlled manner by allowing $X$ sprout a chain of $\P^1$'s.\\

Specifically we can define the $n$th degeneration $X[i]$ inductively from $X[0]:=X$ by:
\begin{itemize}
\item $X[i+1]$ is given by the union $X[i] \cup \P^1$ meeting at a node $n_{i+1}$.
\item The node $n_{1}$ is at $x \in X$. For $i>0$ the node $n_{i+1}$ is in the $i$th component of $X[i+1]$, i.e. the node is not in $X[i-1] \subset X[i+1]$.
\end{itemize}
Then a \textit{degenerated target} is a pair $(T, t)$ where $T=X[i]$ for some $i\geq 0$ and $t$ is a geometric point in the smooth locus of $i$th component of $T$.  \\

A genus $g$ stable map to $X$ relative to $(\mu, x)$ is given by 
\[
\Big(~ h:C \longrightarrow T,~ p: T \longrightarrow X,~ q_1,\ldots,q_{l(\mu)}    ~\Big)
\]
where $(C, q_i)$ is a $l(\mu)$-marked prestable curve, $h$ is a genus $g$ stable map sending $q_i$ to $t$ and $p$ is a morphism sending $t$ to $x$ such that:
\begin{enumerate}
\item There is an equality of divisors on $C$ given by $h^{-1}(t) = \sum \mu_i q_i$.
\item We have $p|_{X}$ is an isomorphism and $p|_{T\setminus X}:T\setminus X\rightarrow \{x\}$ is constant. 
\item The pre-image of each node $n$ of $T$ is a union of nodes of $C$. At any such node $n'$ of $C$, the two branches of $n'$ map to the two branches of $n$, and their orders of branching are the same.
\item The data has finitely many automorphisms (Recall, an automorphism is a a pair of isomorphisms $a:C\rightarrow C$ and $b:T\rightarrow T$ taking $q_i$ to $q_i$ and $t$ to $t$ such that $h\circ a = b\circ h$ and $p = p\circ b$). 
\end{enumerate}

We denote by $\Mbar_g(X,\mu)$ the moduli stack of genus $g$ stable maps relative to $(\mu,x)$. This is the groupoid containing the objects: 

\[
\xi  = \Big(~
\begin{array}{c}\xymatrix@=1em{{C}\ar[d]_{\pi} \\ {S} \ar@/_/[u]_{q_i}}\end{array}
,~~ \begin{array}{c}\xymatrix@=1em{{T}\ar[d]_{\pi'} \\ {S} \ar@/_/[u]_{t}}\end{array}
,~~h: C \rightarrow T
,~~ p :T \rightarrow X
~\Big)
\]

where $\pi$ and $\pi'$ are flat proper morphisms, $h$ is a morphism over $S$ and for each geometric point $z\in S$ we have $\xi_z$ is a genus $g$ stable map relative to $(\mu,x)$. Furthermore, we require that in a neighbourhood of a node of $C_z$ mapping to a singularity of $T_z$ we can choose \'{e}tale-local coordinates on $S$, $C$ and $T$ with charts of the form $\Spec\,R$, $\Spec\,R[u,v]/(uv-a)$ and $\Spec\,R[x,y]/(xy-b)$ respectively such that the map is of the form $x \mapsto \alpha u^k$ and $y \mapsto \alpha v^k$ with $\alpha$ and $\beta$ units. A morphism $\xi_1 \rightarrow \xi_2$ in $\Mbar_g(X,\mu)$ between two appropriately label objects is a pair of cartesian diagrams
\[
\begin{array}{c}
\xymatrix@=1.5em{
 C_1\ar[d]_{\pi_1}\ar[r]^{a'} &  C_2\ar[d]^{\pi_2}\\
S_1 \ar[r]^{a} & S_2
}\end{array}
\hspace{1.5cm}
\begin{array}{c}
\xymatrix@=1.5em{
T_1\ar[d]_{\pi'_1}\ar[r]^{b'} &  T_2\ar[d]^{\pi'_2}\\
S_1 \ar[r]^{b} & S_2
}\end{array}
\]
that are compatible with the other data (i.e. we have $a'\circ q_{1,i} = q_{2,i} \circ a$, $b'\circ t_{1} = t_{2} \circ b$, $b'\circ h_1 = h_2\circ a'$ and $p_1 = p_2\circ b'$).

\subsection{The Canonical Ramification Section}\label{canonical_ram_subsection}

As we saw in the introduction, for a moduli point $[f]\in \Mbar_{g}(X,d)$ we have two natural morphisms
\begin{align}
f^*\omega_{X} \longrightarrow \Omega_{C} 
\hspace{1cm}\mbox{and}\hspace{1cm}
\Omega_{C} \longrightarrow \omega{c} \label{ramification_section_original_morphisms}
\end{align}
which we can combine into a single morphism $\delta: \O_C \rightarrow \omega_C\otimes f^* \omega_X^\vee$. This morphism reflects the ramification properties of $f$ which we will see in section \ref{br_section}. Hence, we will call the bundle $\omega_C\otimes f^* \omega_X^\vee$ the ramification bundle of $f$. \\

Considering the universal curve $\bm{\pi}: \overline{\calC}_g(X,d) \rightarrow \Mbar_g(X,d)$. The above construction still hold for the universal stable map $\bm{f}:\overline{\calC}_g(X,d) \rightarrow X$. We then have a universal section
\begin{align}\label{canonical_ram_section_def}
\bm{\delta} : \O_{\overline{\calC}_g(X,d)} \longrightarrow \overline{\calR} 
\end{align}
where we have denoted the universal ramification bundle $\overline{\calR} := \omega_{\bm{\pi}} \otimes \bm{f}^* \omega_X^\vee$.\\

For above case of stable maps we are interested in a subspace where a generic point $[f]$ corresponds to a map $f$ with a ramification divisor of the form  $R_f = r\cdot z_1 + \cdots + r \cdot z_m$ for disjoint points $z_1, \ldots ,z_m$. However, the key concept of relative stable maps is that the ramification above a fixed point is determined by a given divisor. The ramification is allowed to be free elsewhere.\\

Hence for the relative case we will be interested in a subspace of $\Mbar_g(X,\mu)$ where a generic $[f]$ corresponds to a map $f$ with a ramification divisor of the form:
 \[
 R_f = D_\mu  +  r\cdot z_1 + \cdots + r \cdot z_m
 \]
where $D_\mu = \sum(\mu_i-1)q_i$ is the ramification divisor supported at the points $q_i$ mapping to $x\in X$. So, we are interested in taking $r$th roots of a section of the bundle $\omega_C\otimes f^* \omega_X^\vee \otimes\O_C (-D_\mu) \cong \omega_C^{\log}\otimes f^* (\omega^{\log}_X)^\vee $. The situation is slightly more complicated because of the possibility of a degenerated target. So we consider a general genus $g$ stable map to $X$ relative to $(\mu, x)$ over $S$:
\[
\xi  = \Big(~
\begin{array}{c}\xymatrix@=1em{{C}\ar[d]_{\pi} \\ {S} \ar@/_/[u]_{q_i}}\end{array}
,~~ \begin{array}{c}\xymatrix@=1em{{T}\ar[d]_{\pi'} \\ {S} \ar@/_/[u]_{t}}\end{array}
,~~h: C \rightarrow T
,~~ p :T \rightarrow X
~\Big).
\]
Now, letting $q=q_1+\cdots+q_{l(\mu)} $, we have three line bundles which we are interested in:
\[
\omega_{C/S}^{\log} = \omega_{C/S}(q ),
\hspace{0.75cm}
\omega_{T/S}^{\log} = \omega_{T/S}( t)
\hspace{0.5cm}\mbox{and}\hspace{0.5cm}
\omega_{X}^{\log} = \omega_{X}(x)
\]
and we make choices of morphisms defining the divisors $q$, $t$ and $x$ respectively:
\begin{align}
D_q:\O_C(-q )\rightarrow \O_C,
\hspace{0.75cm}
D_t:\O_T(-t )\rightarrow \O_T
\hspace{0.5cm}\mbox{and}\hspace{0.5cm}
D_x:\O_X(-x )\rightarrow \O_X. \label{divisor_choices}
\end{align}
Now there is a unique choice of isomorphism $p^*\omega_{X}^{\log} \overset{\sim}{\rightarrow} \omega_{T/S}^{\log}$ such that the following diagram commutes
\[
\xymatrix{ 
 p^*\omega_{X} \ar[r]^{p^*D_x}\ar[d] & p^*\omega_{X}^{\log}  \ar[d]^\cong\\
 \omega_{T/S}  \ar[r]^{D_t} & \omega_{T/S}^{\log}
}
\]
where the left vertical morphism is the natural morphism coming from (\ref{ramification_section_original_morphisms}) applied to $p: T\rightarrow X$.\\

After using the isomorphism $p^*\omega_{X}^{\log} \overset{\sim}{\rightarrow} \omega_{T/S}^{\log}$ we are interested in a canonical morphism $h^* \omega_{T/S}^{\log} \longrightarrow \omega_{C/S}^{\log}$. The construction used in (\ref{canonical_ram_section_def}) breaks down here because there we used the fact that $\Omega_X \cong \omega_X$ is a locally free. In general, $\Omega_{T/S} \ncong \omega_{T/S}$ and $\Omega_{T/S}$ is not locally free, because of the nodes on the degenerated target. However, the admissibility condition allows us to  define a morphism $h^* \omega_{T/S} \longrightarrow \omega_{C/S}$ directly.\\

Away from the nodes of $T$ we can simply define the morphism in the usual way. Locally at the nodes we have that $S = \Spec\,R$, $T=\Spec\,R[x,y]/(xy-\xi)$ and $C=\Spec\,R[u,v]/(uv-\zeta)$ with the map $h$ defined by
\[\begin{array}{cccc}
H:&R[x,y]/(xy-\xi)& \longrightarrow & R[u,v]/(uv-\zeta) \\
&x & \longmapsto & \alpha u^a \\
&y & \longmapsto & \beta v^a 
\end{array}\]
for $\alpha$ and $\beta$ units and with $H(\xi)=\alpha\beta\zeta^a$. Also, locally we have that $\omega_{T/X}$  and $\omega_{C/X}$ are generated by
\[
\frac{dx\wedge dy}{(xy - \xi)} 
\hspace{1cm}\mbox{and}\hspace{1cm}
\frac{du\wedge dv}{(uv - \zeta)} 
\]
respectively. Hence, we have a natural isomorphism locally defined by:
\[
\frac{d(H(x))\wedge d(H(y))}{\big(H(x)H(y)  - \Phi(\xi)\big)} = \frac{d(\alpha u^a)\wedge d(\beta v^b)}{\big(\alpha\beta u^a v^a  - \alpha \beta \xi^a\big)} = \frac{du\wedge dv}{(uv - \zeta)}. 
\]
Hence we have the following lemma.

\begin{lemma}\label{lemma_ramification_morphism_degerated_target}
Let $T^{\mathrm{sm}}$ be the smooth locus of $T$ relative to $S$ and $B=h^{-1}(T^{\mathrm{sm}})$. There is a  canonical morphism $\widetilde{\delta}:h^*\omega_{T/S} \longrightarrow  \omega_{C/S}$ such that:
\begin{enumerate}
\item The restriction $\widetilde{\delta}|_{B}$ to is the usual morphism $(h|_B)^*\omega_{T^{\mathrm{sm}}/S} \rightarrow \omega_{C/S}$,
\item $\widetilde{\delta}$ is locally an isomorphism at the nodes of $T$. 
\end{enumerate}
\end{lemma}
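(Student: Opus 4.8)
The plan is to construct $\widetilde{\delta}$ by gluing two local descriptions along an open cover of $C$, so the main task is to check that the two descriptions agree on overlaps. Concretely, cover $C$ by the open set $B = h^{-1}(T^{\mathrm{sm}})$ together with \'{e}tale-local neighbourhoods of the nodes of $C$ that map to nodes of $T$. On $B$ we have the honest pullback-of-differentials morphism $(h|_B)^* \Omega_{T^{\mathrm{sm}}/S} \to \Omega_{C/S}$; since $T^{\mathrm{sm}}/S$ is smooth this is a morphism $(h|_B)^*\omega_{T^{\mathrm{sm}}/S} \to \omega_{C/S}$, giving a candidate for $\widetilde{\delta}|_B$. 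On a neighbourhood of a node we have the explicit local computation carried out just before the lemma statement: in the \'{e}tale coordinates $\Spec R[x,y]/(xy-\xi)$ and $\Spec R[u,v]/(uv-\zeta)$ with $H(x) = \alpha u^a$, $H(y) = \beta v^a$, $H(\xi) = \alpha\beta\zeta^a$, the generator $\tfrac{dx\wedge dy}{xy-\xi}$ of $h^*\omega_{T/S}$ pulls back to
\[
\frac{d(\alpha u^a)\wedge d(\beta v^a)}{\alpha\beta u^a v^a - \alpha\beta\zeta^a} = \frac{du\wedge dv}{uv-\zeta},
\]
which is precisely the chosen generator of $\omega_{C/S}$; this defines a local isomorphism $h^*\omega_{T/S} \to \omega_{C/S}$ on that chart, and in particular establishes part (2).

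Next I would check that these local pieces glue. The overlap of a node-neighbourhood with $B$ is the locus where $u \neq 0$ (equivalently $v = \zeta/u$ is a unit times something, i.e.\ away from the node of $C$); there $T$ is already smooth near the image, and one verifies that the formula $\tfrac{dx\wedge dy}{xy-\xi} \mapsto \tfrac{du\wedge dv}{uv-\zeta}$ is exactly what the naive differential pullback gives, because on this locus $x = \alpha u^a$ is an \'{e}tale-times-monomial coordinate and the identity $\frac{d(\alpha u^a)\wedge d(\beta v^a)}{\alpha\beta(u^a v^a - \zeta^a)} = \frac{du\wedge dv}{uv-\zeta}$ is a computation with invertible functions (after eliminating $v$, or symmetrically $u$). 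Since $\omega_{T/S}$ and $\omega_{C/S}$ restricted to $B$ are line bundles and the two morphisms agree on a dense open subset of each overlap chart where the sheaves are locally free, they agree on the whole overlap. Finally, the independence of the construction from the choice of \'{e}tale coordinates follows because any two such choices differ by an automorphism of $R[x,y]/(xy-\xi)$ fixing the node, which acts compatibly on the canonical generator $\tfrac{dx\wedge dy}{xy-\xi}$ (up to a unit, which is absorbed), and similarly downstairs; hence the glued morphism is well defined as a morphism of sheaves on $C$, giving $\widetilde{\delta}$ with property (1) by construction.

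The step I expect to be the main obstacle is the gluing/compatibility check on the overlaps: one must be careful that the "usual morphism" $(h|_B)^*\omega_{T^{\mathrm{sm}}/S} \to \omega_{C/S}$ really is computed by the same monomial formula near (but not at) the nodes, since there $h$ is ramified of order $a$ and the differential $d(\alpha u^a) = a\alpha u^{a-1} du$ vanishes along $u = 0$ — the point being that this vanishing is exactly matched by the pole-order bookkeeping built into the dualising sheaves $\tfrac{dx\wedge dy}{xy-\xi}$ versus $\tfrac{du\wedge dv}{uv-\zeta}$, so the morphism of \emph{dualising} sheaves (as opposed to K\"ahler differentials) is an isomorphism there. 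Making this precise is most cleanly done by writing $v = \zeta/u$, $y = \xi/x$ on the locus $u \neq 0$, reducing both sides to rank-one free modules generated by $\tfrac{dx}{x}$ and $\tfrac{du}{u}$ respectively, and observing $\tfrac{dx}{x} = \tfrac{d(\alpha u^a)}{\alpha u^a} = a\tfrac{du}{u} + \tfrac{d\alpha}{\alpha}$, which is a unit multiple of $\tfrac{du}{u}$ plus a regular term — so on this overlap the map is the evident one and agrees with the naive pullback. Everything else is routine localisation and the elementary observation that a morphism of line bundles is determined by its restriction to any dense open.
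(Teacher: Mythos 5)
Your construction is essentially the paper's own argument: the paper defines $\widetilde{\delta}$ by the usual pullback away from the nodes of $T$ and by the explicit generator computation $\tfrac{dx\wedge dy}{xy-\xi}\mapsto\tfrac{du\wedge dv}{uv-\zeta}$ at the nodes, and your overlap/gluing check (reducing to $\tfrac{dx}{x}\mapsto a\tfrac{du}{u}+\tfrac{d\alpha}{\alpha}$ on the punctured branches) is precisely the compatibility the paper leaves implicit. The one caveat, inherited from the paper's displayed formula, is a normalisation: carried out carefully the pullback of the generator is a unit multiple of $a\cdot\tfrac{du\wedge dv}{uv-\zeta}$ rather than $\tfrac{du\wedge dv}{uv-\zeta}$ on the nose (consistent with your own branch computation), which is still an isomorphism in characteristic zero, so both the lemma and your argument stand.
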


Now, we restrict the morphism $h$ to the smooth locus of $C$ over $S$ and denote these by $h^{\mathrm{sm}}$
and $C^{\mathrm{sm}}$ respectively. The morphism from lemma \ref{lemma_ramification_morphism_degerated_target} restricted $C^{\mathrm{sm}}$ is injective and is the divisor sequence for the ramification divisor. Both the divisors $q_1+\cdots +q_{l(\mu)}$ and $h^{-1}(t)$ are in $C^{\mathrm{sm}}$. Now using the choices from (\ref{divisor_choices}) it is straightforward to show that there is now a unique map $\widetilde{\delta}^{\,\log}$ making the following diagram commute
\[
\xymatrix{ 
 h^*\omega_{T/S} \ar[r]^{h^*D_t}\ar[d]^{\widetilde{\delta}} & h^*\omega_{T/S}^{\log}  \ar[d]^{\widetilde{\delta}^{\,\log}}\\
 \omega_{C/S}  \ar[r]^{D_q} & \omega_{C/S}^{\log} 
}
\]
Now, using the isomorphism $p^*\omega_{X}^{\log} \overset{\sim}{\rightarrow} \omega_{T/S}^{\log}$ we have the canonical morphism which we desire:
\begin{align}\label{canonical_ram_section_def_log}
\delta^{\log} : \O_{C/S} \longrightarrow \omega_{C/S}^{\log}  \otimes f^*(\omega_{T/S}^{\log})^\vee. 
\end{align}

The above construction immediately leads itself to a universal construction. Considering the universal curve $\bm{\pi}: \overline{\calC}_g(X,\mu) \rightarrow \Mbar_g(X,\mu)$, the universal degenerated target $\bm{\pi'}:\calT \rightarrow \Mbar_g(X,\mu)$ with universal maps $\bm{h}:\overline{\calC}_g(X,\mu) \rightarrow \calT$ and $\bm{p}:\calT \rightarrow X$, and universal sections $\bm{q_i}:\Mbar_g(X,\mu)\rightarrow \overline{\calC}_g(X,\mu) $ and $\bm{t}:\Mbar_g(X,\mu)\rightarrow \calT$. We make, once and for all, choices 
\begin{align}
\bm{D_q}:\O_{\overline{\calC}_g(X,\mu)}(-\bm{q} )\rightarrow \O_{\overline{\calC}_g(X,\mu)},
\hspace{0.5cm}
\bm{D_t}:\O_{\calT}(-\bm{t} )\rightarrow \O_{\calT}
\hspace{0.4cm}\mbox{and}\hspace{0.4cm}
D_x:\O_X(-x )\rightarrow \O_X 
\end{align}
which allow us to define the universal section
\begin{align}\label{canonical_ram_section_def_relative}
\bm{\delta^{\log}} : \O_{\overline{\calC}_g(X,\mu)} \longrightarrow \overline{\calR}^{\log}
\end{align}
where we have denoted the universal ramification bundle $\overline{\calR}^{\log} := \omega_{\bm{\pi}}^{\log}  \otimes \bm{f}^*(\omega_{\bm{\pi'}}^{\log})^\vee$.\\

\subsection{\texorpdfstring{$r$}{r}-Prestable curves}\label{r-prestable_curves_section}

Our moduli problem requires the use of nodal curves where the nodes have a balanced $r$-orbifold structure. These curves are also called twisted curves and were introduced in \cite{AbraVist} to study stable maps where the target is a DM stack. They have since been extensively studied in \cite{Twisted_and_Admissible,Olsson,GW_of_DM,FJR1,FJR2}. In this paper we are interested in using them in relation to taking $r$th roots of line bundles, which have been studied in \cite{AbraJarvis,Chiodo_StableTwisted,Chiodo_Towards}.

\begin{definition}
Let $S$ be a scheme. An \textit{$r$-prestable curve} over $S$ of genus $g$ with $n$ markings is:
\[
\Big(~
\smallfamily{C}{S}{\pi}
,~~\Big( \begin{array}{c}\xymatrix@=1em{C   \\ S \ar[u]^{x_{i}} }\end{array}  \Big)_{i\in\{1,\ldots,n\}}
~\Big)
\]
where
\begin{enumerate}
\item $\pi$ is a proper flat morphism from a tame stack to a scheme;
\item each $x_i$ is a section of $\pi$ that maps to the smooth locus of $C$,
\item the fibres of $\pi$ are purely one dimensional with at worst nodal singularities,
\item the smooth locus $C^{\mathrm{sm}}$ is an algebraic space,
\item the coarse space $\overline{\pi}: \overline{C} \rightarrow S$ with sections $\overline{x}_i$ is a genus $g$, $n$-pointed prestable curve
	\[
	\big(~\overline{C},~~ \overline{\pi}:\overline{C} \rightarrow S,~~ (\overline{x}_i: S\rightarrow \overline{C})_{i\in\{1,\ldots,n\}} ~\big)
	\]
\item the local picture at the nodes is given by $\left[ U/ \mu_r\right] \rightarrow T$, where 
	\begin{itemize}
	\item $T = \Spec A$, $U = \Spec A[z,w]/(zw-t)$ for some $t\in A$, and the action of $\mu_r$ is given by $(z,w)\mapsto (\xi_r z, \xi_r^{-1} w)$.
	\end{itemize}
\end{enumerate}
\end{definition}

We denote the space parameterising $r$-prestable curves by $\frakM^r_{g,n}$. This space is shown to be a smooth proper stack in \cite{Chiodo_StableTwisted}. There is a natural forgetful map 
\[
\frakM^r_{g,n} \longrightarrow \frakM_{g,n}
\]
which takes an $r$-prestable curve to its coarse space. This map is flat and surjective of degree 1, but it is not an isomorphism. In fact its restriction to the boundary is degree $\frac{1}{r}$. \\

One can also consider $r$-orbifold structure at smooth marked points as well. Specifically we can include in the definition \'{e}tale gerbes $\calX_i \rightarrow X$ which are closed sub-stacks of the smooth locus of the curve $\calX_i \hookrightarrow C^{\mathrm{sm}}$. The local picture at an $r$-orbifold marked point is given by $\left[ V/ \mu_r\right] \rightarrow T$, where 
	\begin{itemize}
	\item[] $T = \Spec A$, $V = \Spec A[z]$, and the action of $\mu_r$ is given by $z\mapsto \xi_r z$.\\
	\end{itemize}

\subsection{Line Bundles and their \texorpdfstring{$r$}{r}th Roots on \texorpdfstring{$r$}{r}-Twisted Curves}

The theory of $r$th roots on line bundles on prestable curves has origins related to theta characteristics \cite{Corn_theta}. This lead naturally to the study of $r$-spin structures studied in \cite{JarvisTorsionFree, JarvisGeomHigh} using torsion free sheaves for $r$th roots. $r$-spin structures were also studied using twisted curves in \cite{AbraJarvis,Chiodo_StableTwisted, Chiodo_Towards} and other methods in \cite{CCC}. The results of \cite{Chiodo_StableTwisted} will be of particular interest to us.\\

Consider a family of $r$-prestable curves $C\overset{\gamma}{\rightarrow} \overline{C} \overset{\overline{\pi}}{\rightarrow} S$, and let $E$ be a line bundle on $C$ pulled back from $\overline{C}$ with relative degree divisible by $r$. Define the following groupoid $\mathrm{Root}_C^r ( E)$ containing as objects:
\[
					\Big(~
					h: Z\rightarrow S
					,~~ L,~~ e: L^r \overset{\sim}{\rightarrow}E_Z
					~\Big)
\]
where $h$ is a morphism of schemes, $L$ is a line bundle on $C_Z$ and $e$ is an isomorphism.

\begin{theorem}\label{chiodo_torsor_thm}\, \cite[Prop 3.7, Thm 3.9]{Chiodo_StableTwisted} In the situation above we have:
\begin{enumerate}
\item For each geometric point $p\in S$, we have $r^{2g}$ roots of $E_p$. 
\item $\mathrm{Root}_C^r ( \O_C)$ is a finite group stack. 
\item $\mathrm{Root}_C^r ( E)$ is a finite torsor under $\mathrm{Root}_C^r ( \O_C)$.
\item $\mathrm{Root}_C^r ( E) \rightarrow S$ is \'{e}tale of degree $r^{2g-1}$. \\
\end{enumerate}
\end{theorem}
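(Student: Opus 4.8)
The plan is to reduce all four assertions to the computation of the $r$-torsion in the Picard group of a single twisted curve and to propagate this across the base by deformation theory; with this in hand, parts $(2)$ and $(3)$ become largely formal. Tensor product of line bundles makes $\mathrm{Root}_C^r(\O_C)$ a symmetric monoidal groupoid, i.e. a group stack, and tensoring acts on $\mathrm{Root}_C^r(E)$: for $(L_1,e_1),(L_2,e_2)$ in $\mathrm{Root}_C^r(E)$ over a common base, the pair $\big(L_1\otimes L_2^\vee,\, e_1\otimes e_2^\vee\big)$ lies in $\mathrm{Root}_C^r(\O_C)$, which exhibits $\mathrm{Root}_C^r(E)$ as a pseudo-torsor under $\mathrm{Root}_C^r(\O_C)$; it is an honest torsor once $\mathrm{Root}_C^r(E)\to S$ is shown to be surjective, which will come from the fibre analysis below. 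Moreover every object has automorphism group exactly $\mu_r$: an automorphism of $L$ compatible with $e$ is a global unit $\lambda$ with $\lambda^r=1$, and since $\pi_*\O_C=\O_S$ fibrewise this forces $\lambda$ to be an $r$th root of unity.

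The second step is to show that $\mathrm{Root}_C^r(E)\to S$ is \'{e}tale (with formation commuting with base change) and finite. For a square-zero extension $Z\hookrightarrow Z'$ over $S$ with ideal $I$, the twisted curve $C_{Z'}$ is determined, and the deformations, obstructions and infinitesimal automorphisms of a line bundle on it are governed fibrewise by $H^1(C_p,\O_{C_p})\otimes I$, $H^2(C_p,\O_{C_p})\otimes I$ and $H^0(C_p,\O_{C_p})\otimes I$. Because the coarse-space map $\gamma\colon C_p\to\overline{C}_p$ is finite with $\gamma_*\O_{C_p}=\O_{\overline{C}_p}$, these agree with the cohomology of $\O_{\overline{C}_p}$; in particular $H^2=0$, so every line bundle lifts. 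Imposing $L^r\cong E_{Z'}$ then cuts the torsor of lifts by multiplication by $r$ on $H^1(C_p,\O_{C_p})\otimes I$ and kills the residual infinitesimal automorphisms by multiplication by $r$ on $H^0(C_p,\O_{C_p})\otimes I$; both maps are isomorphisms in characteristic $0$. Hence a root admits a unique lift with no nontrivial automorphisms, so the map is formally \'{e}tale, and with local finite presentation together with the algebraicity inherited from the relative Picard stack it is \'{e}tale. It is separated because the relative Picard functor is, and once its relative degree is computed to be the constant $r^{2g}$ (next paragraph), Zariski's main theorem promotes ``separated, quasi-finite, flat of constant degree'' to finite.

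The fibrewise count is the arithmetic heart of the argument and the step I expect to cost the most. Fix a geometric point $p$; let $C_p$ be the twisted curve, $\overline{C}_p$ its coarse curve, $c$ its number of irreducible components and $\delta$ its number of nodes. Since $E_p=\gamma^*\overline{E}_p$ with $\deg\overline{E}_p$ divisible by $r$, using that $\mathrm{Pic}^0(\overline{C}_p)$ is divisible and that the fractional twists at the nodes realise every multidegree of total degree $0$, the class of $E_p$ is divisible by $r$ in $\mathrm{Pic}(C_p)$; hence $\mathrm{Root}_{C_p}^r(E_p)$ is non-empty, which gives the surjectivity used above and the torsor claim of $(3)$. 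The isomorphism classes of $\mathrm{Root}_{C_p}^r(E_p)$ then form a torsor under $\mathrm{Pic}(C_p)[r]$, so $(1)$ reduces to $\#\mathrm{Pic}(C_p)[r]=r^{2g}$. I would obtain this from the exact sequence $0\to\mathrm{Pic}(\overline{C}_p)\to\mathrm{Pic}(C_p)\to(\mathbb{Z}/r)^{\delta}\to 0$ by applying the snake lemma to multiplication by $r$: the group $\mathrm{Pic}^0(\overline{C}_p)$ is semi-abelian of toric rank $b=\delta-c+1$, so its $r$-torsion has order $r^{2g-b}$, while the connecting map $(\mathbb{Z}/r)^{\delta}\to\mathrm{Pic}(\overline{C}_p)/r\cong(\mathbb{Z}/r)^{c}$ is the incidence map of the dual graph, of rank $c-1$, contributing a kernel of order $r^{\delta-(c-1)}=r^{b}$; the product is $r^{2g}$. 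Finally $(2)$ follows, since $\mathrm{Root}_C^r(\O_C)\to S$ is now finite \'{e}tale with geometric fibres consisting of $r^{2g}$ objects each with automorphism group $\mu_r$, hence a finite group stack of order $r^{2g-1}$; and $(4)$ follows by combining \'{e}taleness with the torsor structure of $(3)$, the degree being $r^{2g}/r=r^{2g-1}$.
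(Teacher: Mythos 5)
This statement is not proved in the paper at all: it is quoted verbatim from Chiodo (\emph{Stable twisted curves and their $r$-spin structures}, Prop.\ 3.7 and Thm.\ 3.9), so there is no in-paper argument to compare against. Your reconstruction is essentially Chiodo's original proof and I find it correct. The torsor structure via $(L_1\otimes L_2^\vee, e_1\otimes e_2^\vee)$, the identification of automorphism groups with $\mu_r$ using $\pi_*\O_C=\O_S$, the \'etaleness via the $r$th power map on the relative Picard stack having derivative ``multiplication by $r$'' on $H^1(\O)$ (an isomorphism in characteristic $0$, with obstructions in $H^2(\O)=0$), and the fibre count $\#\mathrm{Pic}(C_p)[r]=r^{2g-b}\cdot r^{b}$ via the snake lemma applied to $0\to\mathrm{Pic}(\overline{C}_p)\to\mathrm{Pic}(C_p)\to(\mathbb{Z}/r)^\delta\to 0$, with the connecting map identified as the boundary map of the dual graph mod $r$, are all the standard ingredients and are correctly assembled; the non-emptiness argument (divisibility of $\mathrm{Pic}^0(\overline{C}_p)$ plus the fact that node twists realise exactly the multidegrees summing to $0$ mod $r$, which is where the hypothesis $r\mid\deg E$ enters) is also right, though it uses connectedness of the fibres, which you should state. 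Two small points of care: the promotion of ``separated, quasi-finite, flat, locally constant fibre degree'' to ``finite'' is the criterion of EGA IV 15.5.9 rather than Zariski's main theorem per se; and since every object of $\mathrm{Root}_C^r(E)$ carries $\mu_r$ automorphisms, the finiteness and degree statements should be phrased either for the rigidification (of which $\mathrm{Root}_C^r(E)$ is a $\mu_r$-gerbe) or in the stacky sense, which is exactly how the degree $r^{2g}/r=r^{2g-1}$ in parts (2) and (4) arises --- you do account for this correctly.
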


\begin{re}
Consider an $r$-prestable curve over $\C$ with an $r$-orbifold marked point $p$. The local picture at the marking $p$ is given by $\left[ (\Spec\,\C[z])/ \mu_r\right] $ where the action of $\mu_r$ is given by $z\mapsto \xi_r z$. Consider a line bundle $L$ supported at $p$. Then $L$ is locally defined at $p$ by $\phi=z^n$ for $n\in \mathbb{Z}$ and we have $\phi(\xi_r z) = \xi_r^k \phi(z)$ for some $k \in \mathbb{Z}/r$. We call $k$ the \textit{multiplicity} of $L$ at $p$.\\

Similarly, at a node $q$ the the local picture is given by $\left[ (\Spec\,\C[u,v]/uv)/ \mu_r\right]$, where the action of $\mu_r$ is given by $(u,v)\mapsto (\xi_r u, \xi_r^{-1} v)$. So a line bundle $L$ on $C$ supported at $q$ is locally defined by $\psi = u^{n_1}-v^{n_2}$ for $n_1,n_2\in \mathbb{Z}\setminus \{0\}$ such that $\psi(\xi_r u, \xi_r v) = \xi_r^a \psi(u, v)$ for some $a \in \mathbb{Z}/r$. In fact $a$ is determined only up to a choice of branch. Hence we obtain a pair numbers $a,b\in \mathbb{Z}/r$ with either $a=b=0$ or $a+b=r$.  We call this pair the \textit{multiplicity} of $L$ at the node $q$.\\
\end{re}

We can also consider an associated sheaf on the coarse space $\overline{C}$. Locally the coarse space is given by the invariant sections of the structure sheaf. Then the sheaf $\overline{L}:=\gamma_* L$ is similarly given by the locally invariant sections of $L$. When $C$ is a smooth curve $\overline{L}$ is a line bundle. However when $C$ is singular, the $\overline{L}$ is only torsion free in general. Using these ideas one can easily show the following lemma is true.\\

\begin{lemma}\label{rth_root_pushforward_divisor_lemma}
Let $C$ be as $r$-prestable curve over $\C$, with $n$ smooth orbifold points $x_1,\ldots, x_n$ and let $\beta:C\rightarrow \mathsf{C}$ be the map forgetting the orbifold structure at the smooth points. Also let $L$ be a line bundle on $C$ with multiplicities $a_1, \ldots, a_n$ at the the orbifold points and let $D$ be the divisor $\sum a_i x_i$. Then for a section $\sigma: \O_C \rightarrow L$ there is a commuting diagram where the bottom row is the divisor sequence:
\[
\xymatrix{
 & \O_{\mathsf{C}} \ar@{=}[r] \ar[d]^{(\beta_* \sigma)^r} & \O_{\mathsf{C}}   \ar[d]^{\beta_* (\sigma^r)}&  & \\
0 \ar[r] & (\beta_* L)^r \ar[r] &\beta_* (L^r) \ar[r] &\O_D \ar[r] &0 
}
\]
\end{lemma}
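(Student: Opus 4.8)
The statement is local on $\mathsf{C}$, so the plan is to reduce to the two local pictures of an $r$-prestable curve --- a smooth orbifold point and a node with orbifold structure --- and to check the claim by an explicit computation of invariant sections in each case. Away from the orbifold points $x_i$ the map $\beta$ is an isomorphism, so there the diagram is tautological and the divisor $D$ contributes nothing; thus the only content is at the points $x_1,\dots,x_n$, which we may treat one at a time. The key preliminary observation is that pushing forward along $\beta$ commutes with tensor powers only up to the torsion captured by $\O_D$: this is precisely what the lemma encodes, and it is what makes the bottom row exact rather than an isomorphism.

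At a smooth orbifold point the local picture is $\big[(\Spec \C[z])/\mu_r\big]\to \Spec\C[z^r]$, and a line bundle $L$ of multiplicity $a$ at $x_i$ is locally free of rank one generated by a section on which $\mu_r$ acts with weight $a$; concretely $L \cong \O\cdot e$ with $\xi_r\cdot e = \xi_r^{a} e$. The invariant (i.e.\ coarse) sections of $L$ are then $z^{a'}\C[z^r]\cdot e$ where $0\le a'<r$ and $a'\equiv a \pmod r$, so $\beta_* L$ is locally the free rank-one $\C[z^r]$-module generated by $z^{a'} e$. Raising to the $r$th power, the generator becomes $z^{ra'}e^{\otimes r}$, whereas $\beta_*(L^r)$ is generated by $e^{\otimes r}$ itself (since $L^r$ has multiplicity $0$). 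Hence the natural inclusion $(\beta_* L)^{r}\hookrightarrow \beta_*(L^r)$ is, in the chosen trivialisations, multiplication by $z^{ra'} = z^{a'\cdot r}$ --- which up to a unit is $z^{a r}$ on $\C[z^r]$ --- and its cokernel is $\O_{a'x_i}$, i.e.\ $\O_D$ restricted to this chart. For the commuting square one simply notes that $\beta_*(\sigma^r)$ is, by functoriality of pushforward, the $r$th power of $\beta_*\sigma$ followed by this inclusion, which is exactly the statement that the top square commutes; that $\O_{\mathsf C}\to\O_D$ composed appropriately is the divisor sequence is then immediate.

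At a node with orbifold structure the local picture is $\big[(\Spec\C[u,v]/(uv))/\mu_r\big]$ with action $(u,v)\mapsto(\xi_r u,\xi_r^{-1}v)$, and $L$ has multiplicity $(a,b)$ with $a+b\in\{0,r\}$. Since the orbifold points $x_i$ in the statement are the \emph{smooth} orbifold points, the nodal charts are not where $D$ lives; there $L$ is still pulled back appropriately and the pushforward computation shows $(\beta_*L)^r \to \beta_*(L^r)$ is an isomorphism on that chart (no torsion), consistent with $D$ being supported only at the $x_i$. The remaining task is to glue: the two rows are coherent sheaves and the maps between them agree with the local descriptions on an open cover, so they patch to a global diagram, and exactness of the bottom row follows from exactness in each chart.

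The main obstacle I expect is bookkeeping rather than anything deep: one must be careful that the ``$r$th power'' of the pushed-forward section is computed in $(\beta_*L)^{\otimes r}$ and not naively in $\beta_*(L^{\otimes r})$, and that the discrepancy is exactly the vanishing order $a_i$ (modulo the ambiguity of replacing $a_i$ by its representative in $\{0,\dots,r-1\}$, which does not change the divisor $D$ as a subscheme since $z^{a_i}$ and $z^{a_i \bmod r}$ cut out the same length). Making the identification $\beta_*(\omega\text{-type generator})^{\otimes r} \leftrightarrow z^{a_i}\cdot(\text{generator of }\beta_*(L^{\otimes r}))$ precise --- and checking it is independent of the chosen local trivialisation of $L$ --- is the one point that needs genuine care; everything else is a direct unwinding of the definition of $\gamma_*$ as invariant sections.
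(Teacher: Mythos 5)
Your proposal is correct and follows exactly the route the paper intends: the paper gives no written proof, only the remark that the lemma follows from the local description of $\beta_*L$ as invariant sections, and your chart-by-chart computation at the smooth orbifold points (with the observation that the nodes, where $\beta$ is an isomorphism, contribute nothing to $D$) is precisely that argument carried out. The one point you flag yourself --- that the cokernel has length equal to the multiplicity $a_i$ in the paper's sign convention --- is indeed the only place requiring care, and your identification of the inclusion $(\beta_*L)^{\otimes r}\hookrightarrow\beta_*(L^{\otimes r})$ with multiplication by $t^{a_i}$ in the coarse coordinate $t=z^r$ settles it.
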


\section{Stable Maps with Roots of Ramification}\label{construction_section}

\textbf{Section \ref*{construction_section} Notation:} Recall the \hyperlink{notation_main}{notation convention}:
\begin{itemize}
\item  $\M$ is either $\Mbar_{g}(X,d)$ or $\Mbar_{g}(X,\mu)$ for $g \geq 0$, $d>0$ and $\mu$ a partition of $d$.
\item $\calC\rightarrow \M$ be the associated universal curve. 
\item  If $\M$ is $\Mbar_{g}(X,d)$ (resp. $\Mbar_{g}(X,\mu)$) then $\frakM$ is $\frakM_g$ (resp. $\frakM_{g,l(\mu)}$).
\item The expected number of order $r$ special loci in the generic case is denoted by $m$. When $\M=\Mbar_g(X,d)$  we have $m = \frac{1}{r} (2g-2-d(2g_X-2))$ and when $\M=\Mbar_g(X,\mu)$ we have $m = \frac{1}{r}  (2g-2+l(\mu)+|\mu|(1-2g_X))$.
\item Whenever we introduce a new related space, the notation will carry through to the original spaces. For $\Mbar_{g}(X,\mu)$ the two key spaces are:
\begin{itemize}
\item $\M^{[r]} = \Mbar^{\,[r]}_{g}(X,\mu)$ and $\calC^{[r]} = \overline{\calC}^{\,[r]}_{g}(X,\mu)$ defined in \ref{def_stable_maps_with_roots_of_the_ramification_bundle}.\\
\item $\M^{[\frac{1}{r}]} = \Mbar^{_{[1/r]}}_{g}(X,\mu)$ and $\calC^{[\frac{1}{r}]} = \overline{\calC}^{_{[1/r]}}_{g}(X,\mu)$ are defined in \ref{main_moduli_spaces_definitions}.\ref{full_space_part}.\\
\end{itemize}
And the associated spaces are:
\begin{itemize}
\item $\M^{r} = \Mbar^{\,r}_{g}(X,\mu)$ and $\calC^{r} = \overline{\calC}^{\,r}_{g}(X,\mu)$ defined in \ref{moduli_with_roots_of_ramification_section}.\\
\item $\M^{{\frac{1}{r}, \calE}} = \Mbar^{_{\frac{1}{r}, \calE}}_{g}(X,\mu)$ and $\calC^{{\frac{1}{r}, \calE}} = \overline{\calC}^{_{\frac{1}{r}, \calE}}_{g}(X,\mu)$ are defined in \ref{roots_of bundle_moduli_definition}.\\
\item $\M^{\frac{1}{r}} = \Mbar^{_{1/r}}_{g}(X,\mu)$ and $\calC^{1/r} = \overline{\calC}^{_{1/r}}_{g}(X,\mu)$ are defined in \ref{main_moduli_spaces_definitions}.\ref{reduced_space_part}.\\
\end{itemize}
\end{itemize}

\subsection{\texorpdfstring{$r$}{r}-Stable Maps with Roots of the Ramification Bundle}\label{moduli_with_roots_of_ramification_section}

In this subsection we will consider the results of \cite{Chiodo_StableTwisted} in the context of stable maps.  We will begin by considering stable maps where the domain curve is $r$-prestable. We call these $r$-stable maps. The moduli stack of these and its universal curve fit into the two cartesian squares:
\[\xymatrix{
\calC^r \ar[r]\ar[d]_{\bm{\gamma}} &\M^r\ar[r]\ar[d]^{} &  \ar[d]^{} \mathfrak{M}^{r}\\
\calC \ar[r] &\M \ar[r] & \mathfrak{M} 
}\]

Now we will considering stable maps with an $r$th root of a line bundle. Let $\overline{\calE}$ be a line bundle on $\calC$  of degree divisible by $r$ and define the line bundle $\calE$ on   $\calC^r $ by $\calE := \bm{\gamma}^* \overline{\calE}$.

\begin{definition} \label{roots_of bundle_moduli_definition}
Denote by $\M^{\frac{1}{r}, \calE}$ the moduli stack of \textit{$r$-stable maps with roots of $\calE$} which contains families:
				\[
					\Big(~
					\xi
					,~~ \L,~~ e: \L^r \overset{\sim}{\longrightarrow} \calE_\xi ~\Big)
				\]
				where
				\begin{enumerate}
					\item $\xi$ is a family of $r$-stable maps in $\M$;
					\item $\L$ is a line bundle on $\calC_{\xi}$; 
					\item $e$ is an isomorphism of line bundles on $\calC_{\xi}$. 
				\end{enumerate}
				
\end{definition}

\begin{lemma}\label{mod_div_ram_construction} The $\M^{\frac{1}{r}, \calE}$ has the following properties:
\begin{enumerate}
\item
$\M^{\frac{1}{r}, \calE}$ is a proper DM stack.
\item
When $\calE$ is the trivial bundle  $\M^{\frac{1}{r}, \O_{\calC}}\rightarrow \M^r $ is a finite group stack.
\item
The forgetful map  $\M^{\frac{1}{r}, \calE}  \rightarrow\M^r$ is a finite torsor under $\M^{\frac{1}{r}, \O_{\calC}}$ and is \'{e}tale of degree $r^{2g-1}$. 
\end{enumerate}
\end{lemma}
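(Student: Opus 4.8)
The plan is to deduce everything from Chiodo's Theorem \ref{chiodo_torsor_thm} by a base-change argument along the universal curve. The key observation is that $\M^{\frac{1}{r},\calE}$ is, essentially by definition, the relative version of $\mathrm{Root}^r_C(E)$: pulling back the diagram $\calC^r \overset{\bm\gamma}{\to} \overline{\calC}^{\,r} \to \M^r$ (where $\overline{\calC}^{\,r}$ is the coarse universal curve) and the bundle $\calE = \bm\gamma^*\overline{\calE}$, a family in $\M^{\frac{1}{r},\calE}$ over a scheme $Z \to \M^r$ is precisely an object of $\mathrm{Root}^r_{\calC^r_Z}(\calE_Z)$. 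So first I would make this identification precise: set up the fibre product and check that the groupoid of Definition \ref{roots_of bundle_moduli_definition}, viewed over $\M^r$, is equivalent to the stack $Z \mapsto \mathrm{Root}^r_{\calC^r_Z}(\bm\gamma^*\overline{\calE}|_Z)$. This requires that $\overline{\calE}$ has relative degree divisible by $r$ over $\M^r$, which holds by hypothesis, and that its pullback to the twisted curve is pulled back from the coarse space — true since $\calE = \bm\gamma^*\overline{\calE}$ — so Chiodo's hypotheses are met fibrewise.

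Granting this identification, parts (2) and (3) are immediate: Theorem \ref{chiodo_torsor_thm}(2) gives that $\mathrm{Root}^r_C(\O_C)$ is a finite group stack, which over the base $\M^r$ reads as $\M^{\frac{1}{r},\O_{\calC}} \to \M^r$ being a finite group stack; Theorem \ref{chiodo_torsor_thm}(3) and (4) give that $\mathrm{Root}^r_C(E)$ is a finite torsor under $\mathrm{Root}^r_C(\O_C)$ and is étale of degree $r^{2g-1}$, which relativizes verbatim to the statement that $\M^{\frac{1}{r},\calE} \to \M^r$ is a finite $\M^{\frac{1}{r},\O_{\calC}}$-torsor, étale of degree $r^{2g-1}$. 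The torsor and étale-ness statements are local on the base, so no global argument is needed beyond the fibrewise input.

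For part (1), that $\M^{\frac{1}{r},\calE}$ is a proper DM stack, I would argue: by part (3) the forgetful map $\M^{\frac{1}{r},\calE} \to \M^r$ is finite (in particular representable, quasi-finite, separated and proper) and étale, hence unramified; since $\M^r$ is a DM stack — it is the fibre product $\M \times_{\frakM} \frakM^r$, with $\frakM^r$ a smooth DM stack by \cite{Chiodo_StableTwisted} and $\M \to \frakM$ representable, so $\M^r$ is DM and proper because $\M$ is proper and $\frakM^r \to \frakM$ is proper (degree $1$, an iso away from the boundary, and proper on the boundary) — a stack finite over a DM stack is again a DM stack, and a stack proper over a proper stack is proper. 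So properness and the DM property both follow by composing the finite morphism $\M^{\frac{1}{r},\calE} \to \M^r$ with $\M^r \to \Spec\C$.

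The main obstacle is purely bookkeeping rather than conceptual: one must be careful that Definition \ref{roots_of bundle_moduli_definition} takes roots on the \emph{twisted} universal curve $\calC^r$ (so the line bundles $\L$ genuinely need the orbifold structure at the nodes to exist when $\calE$ does not already have fibre degree divisible by $r$ in the naive sense), whereas Chiodo's theorem is stated for $E$ pulled back from the coarse curve; the hypothesis $\calE = \bm\gamma^*\overline{\calE}$ is exactly what reconciles these, and I would make sure the fibrewise translation uses this. A secondary point is checking that $\M^r$ is a DM stack and proper — this is where one invokes smoothness and properness of $\frakM^r_{g,n}$ from \cite{Chiodo_StableTwisted} together with the cartesian squares already displayed before the lemma.
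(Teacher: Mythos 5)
Your proposal is correct and follows essentially the same route as the paper: the paper's proof consists exactly of exhibiting the two cartesian squares identifying $\M^{\frac{1}{r},\O_{\calC}}$ and $\M^{\frac{1}{r},\calE}$ over any $S\rightarrow \M^r$ with Chiodo's $\mathrm{Root}_C^r(\O_C)$ and $\mathrm{Root}_C^r(E)$, and then citing Theorem \ref{chiodo_torsor_thm}. Your additional remarks on why part (1) follows (finiteness over the proper DM stack $\M^r$, itself obtained from the cartesian squares over $\frakM^r$) make explicit what the paper leaves implicit, but do not change the argument.
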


\begin{proof}
Let $a: S \rightarrow  \M^r$ be the morphism of stacks defined by the family $\xi \in \M^r$. Also let $C = (\calC^r)_{\xi}$ and $E= (\calE)_{\xi}$. Then we have the following cartesian diagrams:
\[
\xymatrix{
\mathrm{Root}_C^r(\O_C) \ar[r]\ar[d] & \M^{\frac{1}{r},\O_{\calC}} \ar[d]\\
S \ar[r] & \M^r
}
\hspace{1cm}
\xymatrix{
\mathrm{Root}_C^r(E) \ar[r]\ar[d] & \M^{\frac{1}{r},\calE} \ar[d]\\
S \ar[r] & \M^r
}\]
The lemma now follows from theorem \ref{chiodo_torsor_thm}. 
\end{proof}

\begin{definition}\label{def_stable_maps_with_roots_of_the_ramification_bundle}
In the special case where $\calE = \calR$ we call $\M^{\frac{1}{r},\calR}$ the  moduli space of \textit{$r$-stable maps with roots of the ramification bundle} and denote it with the simplifying notation:
\[
\M^{[r]}:= \M^{\frac{1}{r},\calR}. 
\]
\end{definition}

\subsection{Power Map of Abelian Cone Stacks} 

Let $\overline{\calE}$ be a line bundle on $\calC$  of degree divisible by $r$ and define the line bundle $\calE$ on   $\calC^{[r]}$ by $\calE := \bm{\gamma}^* \overline{\calE}$ where $\bm{\gamma}: \calC^r \rightarrow \calC$ is the map forgetting the $r$-orbifold structure of the curves. Consider $\M^{\frac{1}{r}, \calE}$, the space of $r$-stable maps with roots of $\calE$ defined in \ref{roots_of bundle_moduli_definition} with universal curve $\bm{\pi}: \calC^{\frac{1}{r}, \calE} \rightarrow \M^{\frac{1}{r}, \calE}$, universal section $\bm{s}: \M^{\frac{1}{r}, \calE} \rightarrow \calC^{\frac{1}{r}, \calE}$ and universal $r$th root bundle $\L$ and isomorphism $\bm{e}:\L^r\overset{\sim}{\rightarrow} \calE$.\\

\begin{definition}\label{totalspace_definition}
For a line bundle $\calF$ on $\calC^{\frac{1}{r}, \calE}$, we define the the following notation:
\begin{enumerate}
\item $\mathsf{Tot}\,\bm{\pi}_*\calF := \Spec_{\M^{\frac{1}{r}, \calE}}\, \Big(\Sym^\bullet \,R^1\bm{\pi}_*(\calF^\vee \otimes \omega_{\bm{\pi}} ) \Big)$ which contains objects:
\[
					\Big(~
					\xi
					,~~
					\sigma: \O_{C} \longrightarrow \calF_{\xi} ~\Big)
\]
where $\xi$ is an object of $\M^{\frac{1}{r}, \calE}$ and $C:= (\calC^{\frac{1}{r}, \calE})_{\xi}$ (discussed in \cite[Prop 2.2]{ChangLi} and \cite[Thm 2.11]{ChangLiLi}). Also, let $\bm{\alpha}: \mathsf{Tot}\,\bm{\pi}_*\calF \rightarrow \M^{\frac{1}{r}, \calE}$ denote the natural forgetful map.

\item $\bm{\psi}: \calC_{\mathsf{Tot}\,\bm{\pi}_*\calF}\rightarrow \mathsf{Tot}\,\bm{\pi}_*{\calF}$ is the universal curve and $\bm{\widehat{\alpha}}: \calC_{\mathsf{Tot}\,\bm{\pi}_*\calF} \rightarrow \calC^{\frac{1}{r}, \calE}$ is the natural forgetful map.

\item $\mathsf{Tot}\,\calF := \Spec_{ \calC^{\frac{1}{r}, \calE}}\, \Big(\Sym^\bullet \, \calF^\vee \Big)$ which contains objects:
\[
					\Big(~
					\zeta
					,~~
					\lambda: \O_{S} \longrightarrow s^*\calF_{\zeta} ~\Big)
\]
where $\zeta$ is an object of $ \calC^{\frac{1}{r}, \calE}$ over $S$ and $s:=\bm{s}_\zeta$. Also, let $\bm{\check{\alpha}}: \mathsf{Tot}\,\calF \rightarrow \calC^{\frac{1}{r}, \calE}$ denote the natural forgetful map.
\end{enumerate}
\end{definition}

\begin{remark}
Note that while we use the notation $\mathsf{Tot}\,\bm{\pi}_*\calF$, it is often the case that $\bm{\pi}_*\calF$ is not locally free. This space is called an \textit{abelian cone} in \cite{Intrinsic}. \\
\end{remark}

 Let $\zeta$ be a family in $ \calC^{\frac{1}{r}, \calE}$ over $S$ with $\bm{\pi}_{\zeta} = \pi: C\rightarrow S$ and $s:=\bm{s}_\zeta$. There is a natural evaluation morphism $\mathfrak{e} : \calC_{\mathsf{Tot}\,\bm{\pi}_*\calF} \rightarrow \mathsf{Tot}\,\calF$ defined by
 \begin{align}
					\mathfrak{e} ~:~ \Big(~
					\zeta
					,~~
					\sigma: \O_{C} \longrightarrow \calF_{\xi} ~\Big)
					\longmapsto
					\Big(~
					\zeta
					,~~
					s^*\sigma: \O_{S} \longrightarrow s^*\calF_{\zeta} ~\Big). \label{evaluation_map_definition}
\end{align}

This gives the following commutative diagram where the left-most square is Cartesian:
\begin{align}\label{main_cone_squares}\begin{array}{c}
\xymatrix{
\mathsf{Tot}\,\bm{\pi}_*\calF \ar[d]^{\bm{\alpha}} & \calC_{\mathsf{Tot}\,\bm{\pi}_*\calF} \ar[l]_{\bm{\psi}} \ar[r]^{\mathfrak{e}}\ar[d]^{\bm{\widehat{\alpha}}}& \mathsf{Tot}\,\calF \ar[d]^{\bm{\check{\alpha}}} \\
\M^{\frac{1}{r}, \calE} & \calC^{\frac{1}{r}, \calE} \ar[l]^{\bm{\pi}} \ar@{=}[r] &\calC^{\frac{1}{r}, \calE}
}\end{array}
\end{align}\\

There are special cases when $\calF = \L$ and $\calF = \L^r$ and we have we have canonical maps:
\begin{definition}\label{power_map_definition}
The \textit{$r$th power map over $\M^{\frac{1}{r}, \calE} $} is the map $\tau: \mathsf{Tot}\,\bm{\pi}_*\L \rightarrow \mathsf{Tot}\,\bm{\pi}_*\L^r$ defined by:
\[
					\Big(~
					\xi
					,~~
					\sigma	 ~\Big)
					\longmapsto
					\Big(~
					\xi
					,~~
					\sigma^r ~\Big)
\]
and the \textit{$r$th power map over $\calC^{\frac{1}{r}, \calE} $} is the similarly defined map $\bm{\check{\tau}}: \mathsf{Tot}\,\L \rightarrow \mathsf{Tot}\,\L^r$. 
\end{definition}

Out of the two maps defined here, $\bm{\check{\tau}}$ is nicer. It is a fibre-wise $r$-fold cover of the total space of $\mathsf{Tot}\,\L^r$ ramified at the zero section. However, $\tau$ is the map more directly related to out moduli problem.

\begin{lemma}\label{rth_powermap_factor_lemma}
The $r$th power map over $\M^{\frac{1}{r}, \calE} $ is factors via
\[
\xymatrix@R=1.75em{
\mathsf{Tot}\,\bm{\pi}_*\L \ar[rr]^{\tau} \ar[rd]_{\varphi} &&\mathsf{Tot}\,\bm{\pi}_*\L^r\\
& \calX \ar[ru]_{j} &
}
\]
where $j$ is a closed immersion and $\varphi$ is the quotient by the following action of $\mathbb{Z}_r$ on $\mathsf{Tot}\,\bm{\pi}_*\L$: 
\[
					\zeta_r \cdot \Big(~
					\xi
					,~~
					\sigma	 ~\Big)
					=
					\Big(~
					\xi
					,~~
					\zeta_r \cdot \sigma	 ~\Big)
\]
\end{lemma}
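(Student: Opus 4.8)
The plan is to work affine-locally on $\M^{\frac{1}{r},\calE}$, reducing the statement to an assertion about sheaves of symmetric algebras, and then to identify the ring map underlying $\tau$ with a map that manifestly factors through a ring of $\mathbb{Z}_r$-invariants and is manifestly surjective. First I would recall that $\mathsf{Tot}\,\bm{\pi}_*\L = \Spec_{\M^{\frac{1}{r},\calE}}\big(\Sym^\bullet R^1\bm{\pi}_*(\L^\vee\otimes\omega_{\bm{\pi}})\big)$ and $\mathsf{Tot}\,\bm{\pi}_*\L^r = \Spec_{\M^{\frac{1}{r},\calE}}\big(\Sym^\bullet R^1\bm{\pi}_*(\L^{-r}\otimes\omega_{\bm{\pi}})\big)$; by relative duality the fibre of the first over a point is $H^0(C,\L)$ and of the second is $H^0(C,\L^r)$. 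The map $\tau$ is, on $S$-points, $\sigma\mapsto\sigma^r$, i.e. it is induced by the $r$th power (``$r$th symmetric power / Frobenius-like'') comultiplication on the level of the affine coordinate rings: dually it is the ring homomorphism $\Sym^\bullet(\bm{\pi}_*\L^r)^\vee \to \Sym^\bullet(\bm{\pi}_*\L)^\vee$ sending a linear functional $\ell$ to the degree-$r$ element obtained by composing $\ell$ with the multiplication $H^0(\L)^{\otimes r}\to H^0(\L^r)$ restricted to the diagonal. Here I am using that the universal section behaves multiplicatively, so that forming $\sigma^r$ is functorial and compatible with base change; this is exactly what lets $\tau$ be defined as a morphism of abelian cones at all.

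The key algebraic observation is that the image of this ring map lands in the $\mathbb{Z}_r$-invariant subring for the weight-one action $\sigma\mapsto\zeta_r\sigma$ (since $\sigma^r$ is visibly invariant), and that it is \emph{surjective} onto that invariant subring. Surjectivity is the substantive point and it is where I expect the main obstacle to lie: one must show that the invariant subalgebra of $\Sym^\bullet(\bm{\pi}_*\L)^\vee$ under the weight-one $\mu_r$-action is generated by the image of $\tau^\sharp$, equivalently that the invariants of $\Sym^\bullet V^\vee$ (for $V$ a vector bundle, or more precisely for the abelian cone $\bm{\pi}_*\L$ which need not be locally free) are generated in the appropriate degrees by the ``$r$th power'' elements. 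For a genuine vector bundle this is a classical fact about $\mu_r$ acting with all weights equal to $1$: the invariant ring is generated by degree-$r$ monomials, and each such monomial $x_{i_1}\cdots x_{i_r}$ is the image of the linear functional dual to the product section. For the possibly-non-locally-free abelian cone I would either reduce to the locally free case by a resolution/Noetherian approximation argument, or invoke \cite{Intrinsic} / \cite{ChangLi} to present $\bm{\pi}_*\L$ as a kernel of a map of vector bundles and check the invariant-ring statement is preserved.

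Granting surjectivity, I then \emph{define} $\calX$ to be the scheme-theoretic image of $\tau$, i.e. $\calX := \Spec_{\M^{\frac{1}{r},\calE}}\big(\Sym^\bullet(\bm{\pi}_*\L)^\vee\big)^{\mathbb{Z}_r}$, with $j:\calX\hookrightarrow\mathsf{Tot}\,\bm{\pi}_*\L^r$ the closed immersion induced by the (now-established) surjection of the respective coordinate rings, and $\varphi:\mathsf{Tot}\,\bm{\pi}_*\L\to\calX$ the map induced by the inclusion of the invariant subring; by construction $\tau = j\circ\varphi$. It remains to check that $\varphi$ really is the quotient by the stated $\mathbb{Z}_r$-action, which is immediate once we know $\calX$ is the invariant spectrum — this is just the definition of a (good) quotient, valid here because $\mathbb{Z}_r$ is finite and we are working over a base over $\C$ so the invariant construction commutes with the relative $\Spec$. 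Finally I would note that all constructions are compatible with base change along $\M^{\frac{1}{r},\calE}\to\M^r$ and functorial, so the local pieces glue to the global diagram; the argument over $\calC^{\frac{1}{r},\calE}$ for $\bm{\check\tau}$ is formally identical (indeed easier, since $\mathsf{Tot}\,\L$ is a genuine line-bundle total space and the ring in question is a polynomial ring in one variable).
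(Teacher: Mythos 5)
Your plan takes a genuinely different route from the paper's. The paper never works with coordinate rings: it factors $\tau$ through its graph inside $\mathsf{Tot}\,\bm{\pi}_*\L^r \times_{\M^{\frac{1}{r},\calE}} \mathsf{Tot}\,\bm{\pi}_*\L$, identifies that fibre product with an abelian cone over $\mathsf{Tot}\,\bm{\pi}_*\L^r$ so that the projection $\mathrm{pr}_1$ is a closed map, concludes that $\mathrm{im}(\tau)$ is a closed substack, and only then identifies the image with the $\mathbb{Z}_r$-quotient. Your approach instead tries to establish the factorisation at the level of sheaves of algebras, which would be a stronger and more explicit statement if it worked.

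There is, however, a genuine gap at exactly the step you flag as the substantive point. The degree-$r$ piece of the image of $\tau^\sharp$ is the image of $H^0(L^r)^\vee$ under the dual of the multiplication map $m:\Sym^r H^0(L)\rightarrow H^0(L^r)$, so it equals all of $\Sym^r H^0(L)^\vee$ (the degree-$r$ part of the invariant ring) only when $m$ is injective. Your justification --- that each monomial $x_{i_1}\cdots x_{i_r}$ is the image of ``the linear functional dual to the product section'' --- presupposes that the product sections $\sigma_{i_1}\cdots\sigma_{i_r}$ are linearly independent in $H^0(L^r)$, which is precisely injectivity of $m$; and $m$ is usually very far from injective. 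Already for $L=\O_{\P^1}(2)$ and $r=2$ one has $\dim\Sym^2H^0(L)=6$ but $h^0(L^2)=5$: writing $\sigma=as^2+bst+ct^2$, the map $\tau$ is $(a,b,c)\mapsto(a^2,\,2ab,\,2ac+b^2,\,2bc,\,c^2)$, and the degree-$2$ part of the subring generated by the image is the $5$-dimensional span of $a^2,ab,2ac+b^2,bc,c^2$, which does not contain $b^2$. Hence $\mathrm{im}(\tau^\sharp)$ is a proper subring of $\C[a,b,c]^{\mathbb{Z}_2}$ (the two agree only after integral closure), and surjectivity onto the invariant subring fails. The secondary issue you raise --- that $\bm{\pi}_*\L$ is only an abelian cone, not a bundle --- is left unresolved by the sketch, but the multiplication-map failure is already fatal in the locally free case. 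To salvage the argument you would either need to weaken what you prove (e.g.\ that the map from the quotient to the scheme-theoretic image is finite and bijective rather than an isomorphism), or abandon the ring-level surjectivity and obtain closedness of the image by a separate mechanism, as the paper does with the graph/abelian-cone argument.
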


\begin{proof}
We first show that the image of $\tau$ is a closed substack of $\mathsf{Tot}\,\bm{\pi}_*\L^r$. Denote the closed immersion defined by taking the graph of $\tau$ by $i: \mathsf{Tot}\,\bm{\pi}_*\L \rightarrow \mathsf{Tot}\,\bm{\pi}_*\L^r \times_{\M^{\frac{1}{r}, \calE} } \mathsf{Tot}\,\bm{\pi}_*\L $. Then $\tau$ factors via:
\[
\xymatrix@R=1.75em{
\mathsf{Tot}\,\bm{\pi}_*\L \ar@{^{(}->}[r]^{i\hspace{0.75cm}} \ar[rd]_{\tau} &\mathsf{Tot}\,\bm{\pi}_*\L^r \times_{\M^{\frac{1}{r}, \calE} } \mathsf{Tot}\,\bm{\pi}_*\L \ar[d]^{\mathrm{pr}_1}  \\
&\mathsf{Tot}\,\bm{\pi}_*\L^r 
}
\]

We claim that $\mathrm{pr}_1$ is a closed map. To see this we let $\bm{\psi}: \calC_{\mathsf{Tot}\,\bm{\pi}_*\L^r} \rightarrow \mathsf{Tot}\,\bm{\pi}_*\L^r$ be the universal family. Then we have the following abelian cone stack over $\mathsf{Tot}\,\bm{\pi}_*\L^r$ 
\[
p:\Spec_{\mathsf{Tot}\,\bm{\pi}_*\L^r}\Big(\Sym^\bullet R^1{\psi}_*(\psi^*\L^\vee \otimes \omega_{\psi} ) \Big){\longrightarrow} \mathsf{Tot}\,\bm{\pi}_*\L^r
\]
which is isomorphic over $\mathsf{Tot}\,\bm{\pi}_*\L^r$ to the pullback:
\[
\xymatrix@C=0.5em{
\mathsf{Tot}\,\bm{\pi}_*\L^r \times_{\M^{\frac{1}{r}, \calE}} \mathsf{Tot}\,\bm{\pi}_*\L \ar[rr]^{\sim\hspace{0.75cm}} \ar[rd]_{\mathrm{pr}_1} &&  \Spec_{\mathsf{Tot}\,\bm{\pi}_*\L^r} \Big(\Sym^\bullet R^1{\psi}_*(\psi^*\L^\vee \otimes \omega_{\psi} )\Big) \ar[dl]^{p}\\
& \mathsf{Tot}\,\bm{\pi}_*\L^r &
}
\]
$p$ is a closed map, so $\mathrm{pr}_1$ is also closed and $\mathrm{im}(\tau)$ is a well defined closed substack. It is clear that $\mathrm{im}(\tau)$ is isomorphic to the quotient of $\mathsf{Tot}\,\bm{\pi}_*\L$ by the action of $\mathbb{Z}_r$. 
\end{proof}

\subsection{Proof of Theorem \ref{construction_and_proper_theorem}}

In this section we will prove theorem \ref{construction_and_proper_theorem} about the properties of $\M^{{1/r}}$. We will also consider related spaces that contain extra information which we will denote by $\M^{[1/r]}$ and $\M^{(1/r)}$. In particular $\M^{[1/r]}$ will be the key space for study in the later sections of this article. \\

Let $\bm{\pi}: \calC^{[r]} \rightarrow \M^{[r]}$ be the universal curve of $\M^{[r]}$ and $\bm{f}: \calC^{[r]} \rightarrow X$ be the universal $r$-stable map and $\bm{\delta}: \O_{\calC^r} \rightarrow \calR$ be the pullback by $\bm{\gamma}: \calC^r \rightarrow \calC$ of canonical ramification section defined in (\ref{canonical_ram_section_def}) and (\ref{canonical_ram_section_def_relative}). Where $\bm{\gamma}$ is the map which forgets the $r$-orbifold structure.  Also let $\L$ be the universal $r$th root on $\M^{[r]}$.  \\

\begin{definition} \label{main_moduli_spaces_definitions}
The moduli spaces of stable maps with divisible ramification are:
\begin{enumerate}
\item \label{reduced_space_part}
$\M^{{1/r}}$ is the substack of $\M^{{r}}$ containing families $\xi$ where there exists:
\begin{enumerate}
\item a line bundle $L$ on $C:= (\calC^r)_\xi$;
\item an isomorphism $e: L^r \overset{\sim}{\rightarrow} \calR_\xi$;
\item a morphism $\sigma: \O_C \rightarrow L$;
\end{enumerate}
such that $e(\sigma^r) = \bm{\delta}_\xi$. 

\item $\M^{(1/r)}$ is the substack of $\M^{[r]}$ containing families $\zeta = (\xi, L, e)$ where $\xi$, $L$ and $e$ are as above and there exists a morphism $\sigma: \O_C \rightarrow L$ as above. 

\item\label{full_space_part} $\M^{[1/r]}$ is the substack of $\mathsf{Tot}\,\bm{\pi}_*\L$ containing families $\chi = (\xi, L, e, \sigma)$ where $\xi$, $L$, $e$ and $\sigma$ are as above.

\end{enumerate}
\end{definition}

 These three stacks are related by the following diagram where the horizontal arrows are forgetful maps and the vertical arrows are inclusions. 
 \[
\xymatrix{
\M^{[\frac{1}{r}]} \ar[d] \ar[r] & \M^{(\frac{1}{r})} \ar[d] \ar[r] &\M^{\frac{1}{r}} \ar[d] \\
\mathsf{Tot}\,\bm{\pi}_*\L \ar[r] &\M^{[r]} \ar[r] & \M^r
}
\]\\

After pulling back to $\M^{[r]}$, the canonical ramification section $\bm{\delta} :\O_{\calC^r} \rightarrow \calR$  and the universal $r$th root $\bm{e}:\L^r\overset{\sim}{\rightarrow} \calR$ define a natural inclusion:
\begin{align}
\begin{array}{cccc}
\bm{i'} : & \M^{[r]} & \longrightarrow & \mathsf{Tot}\,\bm{\pi}_*\L^r \\
& \xi &\longmapsto & \big(~\xi, \,\bm{e}_\xi^{-1}(\bm{\delta}_\xi)\,\big).
\end{array}\label{inclusion_definition_delta}
\end{align}

$\M^{[{1/r}]}$ now fits into the following cartesian diagram defining $\bm{\nu}$:
\[\xymatrix{
		\M^{[{\frac{1}{r}}]} \ar[r]^{\bm{i}\hspace{0.5em}}\ar[d]_{\bm{\nu}} &\mathsf{Tot}\,\bm{\pi}_*\L \ar[d]^{\bm{\tau}}\\
		\M^{[r]}\ar[r]_{\bm{i'}\hspace{0.5em}} &\mathsf{Tot}\,\bm{\pi}_*\L^r
}\]
Lemma \ref{rth_powermap_factor_lemma} shows that $\M^{[{1/r}]}$ is a proper $DM$ stack. We have that $\M^{(1/r)}$ is the quotient of $\M^{[{1/r}]}$ by the action of $\mathbb{Z}/r$, showing $\M^{(1/r)}$  is a closed substack of $ \M^{[r]}$. Also, since $\M^{[r]} \rightarrow \M^{r}$ is proper we can define $\M^{1/r}$ to be the closed substack of $\M^r$ coming from the image of $\M^{(1/r)}$. Hence we have proved theorem \ref{construction_and_proper_theorem} after composing with $\M^r \rightarrow \M$, which is flat and proper.\\

Note that the forgetful map $\M^{[1/r]} \rightarrow \M^{(1/r)}$ is \'{e}tale of degree $r$. However, the map $\M^{(1/r)} \rightarrow \M^{1/r}$ is more complicated and in general not \'{e}tale. There are cases where the map is \'{e}tale such as when the genus is zero, then the map is degree $1/r$. For another example, consider the space $\M^{(1/r)}_g(\P^1,1)$ where the map is \'{e}tale of degree $r^{2g-1}$.

\section{Branching and Ramification of \texorpdfstring{$r$}{r}-Stable Maps}\label{br_section}

\textbf{Section \ref*{br_section} Notation:} Consider the universal objects of $\M^{[1/r]}$:
\begin{enumerate}
\item The universal curve, $\bm{\rho}:\calC^{[1/r]}\rightarrow \M^{[1/r]}$
\item The universal stable map, $\bm{f}:\calC^{[1/r]}\rightarrow X$ and $\bm{F}:\calC^{[1/r]}\rightarrow X\times \M^{[1/r]}$
\item The universal canonical section, $\bm{\delta}: \O_{\calC^{[1/r]}} \rightarrow \calR$
\item The universal $r$th root of $\bm{\delta}$, $(\L, \bm{e}:\L^r\overset{\sim}{\rightarrow} \calR, \bm{\sigma}:  \O_{\calC^{[1/r]}} \rightarrow \L)$\\
\end{enumerate}

For a family $\xi$ over $S$ in $\M^{[1/r]}$ we will use the following notation
\begin{itemize}
\item[] $C:=\calC^{[1/r]}_\xi$, $ \rho:=\bm{\rho}_\xi$, $f:= \bm{f}_\xi $, $F := \bm{F}_\xi$, $\delta:=\bm{\delta}_\xi$, $L:=\L_\xi $, $ e:=\bm{e}_\xi$ and $ \sigma:=\bm{\sigma}_{\xi}$.\\
\end{itemize}

We denote the expected number of order $r$ ramification loci in the generic case by $m = \frac{1}{r} (2g-2-d(2g_X-2))$ in the case $\M=\Mbar_g(X,d)$ and  $m = \frac{1}{r}  (2g-2+l(\mu)+|\mu|(1-2g_X))$ in the case $\M=\Mbar_g(X,\mu)$. For a morphism of sheaves $a:A\rightarrow B$, we will denote the associated complex in degree $[-1,0]$ by $[a:A\rightarrow B]$.

\subsection{Divisor Construction and the Branch Morphism for Stable Maps}

As we saw in the introduction and discussed in \ref{canonical_ram_subsection} the ramification divisor is not well defined for stable maps. However it is possible to define a branch divisor using the canonical ramification section defined in   \ref{canonical_ram_subsection}. To do this, we must first review a construction of Mumford \cite[\S 5.3]{GIT} which allows us to assign a Cartier divisor to certain complexes of sheaves. \\

Let $Z$ be a scheme and recall that a complex of sheaves $E^\bullet$ is torsion if the support of each $\calH^i(E^\bullet)$ does not contain any of the associated points of $Z$. Let $E^\bullet$ be a finite torsion complex of free sheaves on $Z$ and let $U\subset Z$ be the complement of $\bigcup_i \Supp\,\calH^i(E^\bullet)$. Then $E^\bullet|_U$ is exact and $U$ contains all the associated points of $Z$. There are two ways to construct isomorphisms
\[
\mathrm{det}\, E^\bullet|_U \overset{\sim}{\longrightarrow} \O_U.
\]
\begin{enumerate}
\item $\kappa$: This is a canonical isomorphism which arises from the exactness of $E^\bullet|_U$.
\item $\Psi$: Which is from an explicit choice of isomorphism $E_i\overset{\sim}{\longrightarrow} \O_U$ for each $i$. \\
\end{enumerate}

So a choice of $\Psi$ defines a section $\Psi\circ \kappa^{-1}\in H^0(U, \O_U^*)$. Also, it is shown in \cite[Lemma 1]{FantechiPand} that if $U$ contains all the associated points of $Z$ then a section of $H^0(U, \O_U^*)$ defines a canonical section $\lambda$ of  $H^0(Z, \mathcal{K}^*)$. A different choice of $\Psi$ amounts to multiplication of $\lambda$ by an element of $H^0(Z, \O_Z^*)$. In this way $E^\bullet$ defines an element of $H^0(Z, \mathcal{K}^*/\O_Z^*)$. This construction also holds when $E^\bullet$ is a perfect complex (i.e. locally isomorphic to a finite complex of locally free sheaves). 

\begin{definition}
Let $E^\bullet$ be a perfect torsion complex. The divisor associated to $E^\bullet$ is the divisor constructed above and is denoted by $\div(E^\bullet)$. 
\end{definition}

The divisor construction has the following important properties. 
\begin{lemma} \cite[Prop 1]{FantechiPand} Let $E^\bullet$ be a perfect torsion complex.
\begin{enumerate}
\item $\div(E^\bullet)$ depends only on the isomorphism class of $E^\bullet$ in the derived category of $Z$.
\item If $F$ is a coherent sheaf admitting a finite free resolution $F^\bullet$, then we have $\div(F):= \div(F^\bullet)$ is an effective divisor. 
\item If $D$ is an effective divisor then $\div(\O_D) = D$. 
\item The divisor construction is additive on distinguished triangles. 
\item If $h:Z'\rightarrow Z$ is a base change such that $h^*E^\bullet$ is torsion, then we have $\div(h^*E^\bullet) = h^*\div(E^\bullet)$.
\item If $L$ is a line bundle then $\div(E^\bullet \otimes L) = \div(E^\bullet )$. 
\end{enumerate}
\end{lemma}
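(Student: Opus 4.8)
The statement to prove is the list of six properties of the divisor construction $\div(E^\bullet)$, attributed to \cite[Prop 1]{FantechiPand}. Let me think about how I would prove these.

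\textbf{Plan.} The strategy is to work locally and reduce everything to the case of a genuine bounded complex of free sheaves, where the construction of $\div$ via the comparison of the two trivializations $\kappa$ and $\Psi$ of $\det E^\bullet$ is explicit. Since $\div(E^\bullet) \in H^0(Z,\mathcal{K}^*/\O_Z^*)$ is defined by gluing local data, each property can be checked on a suitable open cover, and on each open set we may assume $E^\bullet$ is a finite complex of frees.

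\textbf{Step 1 (invariance in the derived category).} For (1), I would first observe that a quasi-isomorphism between torsion perfect complexes, after localizing, can be factored through elementary operations: adding an acyclic complex of the form $[\O \xrightarrow{\mathrm{id}} \O]$ in degrees $[i-1,i]$, and isomorphisms of complexes. Adding such an acyclic summand changes neither $\det$ (it contributes $\det[\O\xrightarrow{\sim}\O] = \O$ canonically, compatibly with both $\kappa$ and $\Psi$), so the section $\Psi\circ\kappa^{-1}$ is unchanged. For genuine isomorphisms of complexes the determinant functor is functorial, so again $\Psi\circ\kappa^{-1}$ is unchanged up to $H^0(Z,\O_Z^*)$. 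Finally one invokes that any two quasi-isomorphic bounded complexes of frees are connected by a chain of such moves (the standard statement that the determinant extends to the derived category). The main subtlety here — and I expect this to be the principal obstacle — is being careful that ``torsion'' is preserved and that $U$ (the locus of exactness) can be chosen compatibly; this is where one needs the associated-points condition from \cite[Lemma 1]{FantechiPand} to guarantee that a unit on $U$ determines the rational section canonically.

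\textbf{Steps 2--6 (the remaining properties).} For (2), if $F^\bullet \to F$ is a finite free resolution then $F^\bullet$ is torsion iff $F$ is, and one computes $\Psi\circ\kappa^{-1}$ directly: locally $F = \mathrm{coker}$ of a map of frees of the same rank, $\kappa$ is the determinant of that map, and the resulting rational function is regular — i.e. the divisor is effective. Property (3) is the special case $F = \O_D$: locally $D = \div(t)$ and $\O_D$ has the resolution $[\O \xrightarrow{t} \O]$, whose determinant comparison gives exactly the section $t$, hence the divisor $D$. Property (4), additivity on distinguished triangles, follows because a distinguished triangle of perfect complexes is locally represented by a short exact sequence of complexes $0 \to E_1^\bullet \to E_2^\bullet \to E_3^\bullet \to 0$ of frees, and the determinant is multiplicative on such sequences — $\det E_2^\bullet \cong \det E_1^\bullet \otimes \det E_3^\bullet$ — compatibly with both $\kappa$ (by the snake lemma on the exact loci) and $\Psi$ (choose compatible trivializations). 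Property (5) is immediate from the local, explicit nature of the construction: $h^*$ of a local free resolution is a local free resolution, $h^*\kappa = \kappa$, $h^*\Psi = \Psi$, so the rational section pulls back; one only needs the hypothesis that $h^*E^\bullet$ is still torsion for the construction to make sense downstairs. Property (6) follows from (4) applied to the triangle relating $E^\bullet$, $E^\bullet\otimes L$ — more directly, tensoring a complex of frees by a line bundle multiplies $\det$ by $L^{\otimes \chi}$ where $\chi$ is the alternating sum of ranks, but on the exactness locus $U$ this $\chi$ is zero (the complex is exact there, so ranks alternate to zero), so $\det(E^\bullet\otimes L)|_U = \det(E^\bullet)|_U \otimes L^{\otimes 0}|_U = \det(E^\bullet)|_U$ and both $\kappa$ and $\Psi$ are unchanged.

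Throughout, the one genuine piece of work is (1), and within it the verification that the determinant functor descends to the derived category while respecting the two trivializations; (4) requires a parallel but easier compatibility check. Everything else is a direct local computation with the explicit resolutions, so I would state (1) and (4) carefully and treat (2), (3), (5), (6) as short consequences, referring to \cite[\S 5.3]{GIT} and \cite[Lemma 1]{FantechiPand} for the foundational facts about the determinant and the passage from units on $U$ to rational sections.
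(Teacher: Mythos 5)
The paper does not actually prove this lemma: it is quoted verbatim from \cite[Prop 1]{FantechiPand} and used as a black box, so there is no in-paper argument to compare against. Your outline is a correct reconstruction of the standard proof from that reference (and from Mumford's construction in \cite[\S 5.3]{GIT}): localize, represent the perfect complex by a bounded complex of frees, and track the section $\Psi\circ\kappa^{-1}$ through the multiplicativity and functoriality of the determinant. Parts (1), (3), (4), (5), (6) are handled essentially as in the source; in (6) your first clause (``follows from (4) applied to the triangle relating $E^\bullet$ and $E^\bullet\otimes L$'') does not make sense as stated since there is no such natural triangle, but the determinant computation you immediately substitute, using that the alternating sum of ranks vanishes on the exactness locus, is the right argument. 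The one place where your sketch is genuinely thinner than the facts require is (2): a torsion coherent sheaf with a finite free resolution is not in general locally the cokernel of a square matrix of frees (that would need projective dimension one), so the effectivity of $\div(F)$ really rests on the MacRae-invariant type statement that for a torsion module of finite free dimension the associated determinantal section is regular; this is exactly what \cite[\S 5.3]{GIT} supplies, so the gap is one of attribution rather than a flaw in the conclusion.
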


The divisor construction is use in \cite{FantechiPand} to construct a morphism 
\[
\begin{array}{cccc}
br : &\M &\longrightarrow &\Sym^{m'} X. 
\end{array}
\]
where $m'$ is the virtual dimension of $\M$. In particular, if $\zeta \in \M$ is a family of stable maps over $S$ and $\mathrm{pr}_2:X\times S\rightarrow S$ is the projection, then they show that the canonical ramification section (see \ref{canonical_ram_subsection}) defines a $\mathrm{pr}_2$-relative effective Cartier divisor of degree $m'$:
\[
B_\zeta:= \div\Big( R(\bm{F}_\zeta)_*[\O_{\calC_{\zeta}} \overset{\bm{\delta}_{\zeta}}{\longrightarrow} \overline{\calR}_{\zeta}]\Big).
\]
Hence the map $br$ is defined by $\zeta \mapsto B_\zeta$. 

\begin{remark}
In \cite{FantechiPand} the relative case $\M = \Mbar_g(X, \mu)$ is not considered. However, the results and proofs required to define $br$ work in this case when we use the section $\bm{\delta}$ constructed in section  \ref{canonical_ram_subsection} and  $m'=2g-2-d(2g_X-2)$ is replaced by $m'=2g-2+l(\mu)+|\mu|(1-2g_X)$. 
\end{remark}

\subsection{A Branch Morphism for Maps with Divisible Ramification}

We will show in this section that a branch morphism can be constructed for stable maps with divisible ramification. The role of the canonical ramification section will be replaced by its universal $r$th root.

\begin{lemma}
The direct image $RF_*[\O_C\overset{\sigma}{\longrightarrow} L]$ is a perfect torsion complex. 
\end{lemma}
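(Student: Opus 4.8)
The plan is to reduce to the analogous statement for the morphism $\bm\delta$, which is already known from \cite{FantechiPand} (as recalled just above), by exploiting the relation $e(\sigma^r) = \delta$ and the fact that passing to the coarse space and forgetting orbifold structure behaves well with respect to direct image of perfect complexes. Concretely: I would first argue that the question is local on the base $S$, and that by the base-change property of the divisor construction (part (5) of the cited Lemma) and of $RF_*$ it suffices to treat a single geometric fibre, i.e.\ to show $RF_*[\O_C \overset{\sigma}{\to} L]$ has bounded, coherent cohomology supported away from the associated points of $S$ and is perfect. Perfectness of $RF_*$ of a two-term complex of line bundles on a family of (twisted, nodal) curves is standard: $F$ (equivalently $\overline f$ composed with the coarse-space map) is proper and the complex $[\O_C \to L]$ is a perfect complex on $C$, and $C\to S$ is a proper flat family of curves, so $RF_*$ of a perfect complex is perfect; here I would invoke that $\calC^{[1/r]}$ is a tame stack so that pushforward along $\gamma: C \to \overline C$ is exact on quasi-coherent sheaves and preserves perfectness.

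The substantive point is torsion, i.e.\ that $\calH^i\big(RF_*[\O_C\overset{\sigma}{\to}L]\big)$ is supported on a closed subset of $S$ meeting no associated point of $S$. For this I would use that on a dense open $U\subseteq S$ — the locus where the domain curve is smooth and the map is ``generic'' with every special locus of order exactly divisible by $r$ — the section $\sigma$ is, fibrewise, the divisor sequence for an honest effective divisor $D$ with $rD = R_f$ (this is exactly the content of Lemma \ref{rth_root_pushforward_divisor_lemma}, comparing $\beta_*(\sigma^r)$ with $\beta_*(\sigma)^r$ and with $\delta$); in particular over $U$ the complex $[\O_C\overset{\sigma}{\to}L]$ is quasi-isomorphic to $\O_D$ with $D$ finite over $S$, so $RF_*$ of it is $\O_{F(D)}$, a coherent sheaf finite over $S$, hence torsion. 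Since $\M^{[1/r]}$ is non-empty only in the stated numerical range and, being (locally) cut out inside $\mathsf{Tot}\,\bm\pi_*\L$, contains such a $U$ dense in every component — or at worst one reduces to each component separately — the support of the cohomology is contained in the complement of $U$, a proper closed subset containing no associated point. The one case needing a separate word is a component of $\M^{[1/r]}$ whose generic curve is nodal; there I would instead observe directly that $\sigma$ is fibrewise nonzero (it is an $r$th root of the canonical section $\delta$, which is nonzero on any component not entirely contracted, and on contracted components one checks $\delta\neq 0$ from its explicit local form at the attaching nodes given in \S\ref{canonical_ram_subsection}), so $[\O_C\overset{\sigma}{\to}L]$ has cohomology supported on a proper closed subset of each fibre, and $RF_*$ of it is therefore torsion by the same dimension count.

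I expect the main obstacle to be precisely the verification that $\sigma$ is fibrewise nonzero — equivalently that the cohomology sheaves of $[\O_C\overset\sigma\to L]$ are fibrewise torsion — on the boundary strata where the domain is nodal and possibly has contracted components, since there $L^{\otimes r}\cong\omega_C\otimes f^*\omega_X^\vee$ can be a nontrivial line bundle on a contracted genus-$g'$ subcurve and one must rule out $\sigma$ vanishing identically on such a subcurve. This is handled by the description of special loci and their orders in \S\ref{br_section}: on a contracted component the section $\delta = e(\sigma^r)$ is determined by the branching data $a_i$ at the attaching nodes and is nonzero there, which forces $\sigma\neq 0$; combined with the generic (smooth-domain) analysis via Lemma \ref{rth_root_pushforward_divisor_lemma} this gives the torsion statement on all of $\M^{[1/r]}$. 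Everything else — perfectness, base change, locality — is formal.
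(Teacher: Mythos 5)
There is a genuine gap in the torsion half of your argument, and it starts with the definition you are using. ``Torsion'' here means that the cohomology sheaves of $RF_*[\O_C\overset{\sigma}{\to}L]$, which live on $X\times S$, avoid the associated points of $X\times S$ --- not of $S$. Your strategy of exhibiting a dense open $U$ in the moduli space over which the complex is a structure sheaf of a relative divisor attacks the wrong support condition: over $U$ the cohomology is \emph{not} zero (it is $\O_{F(D)}$), so the support is certainly not contained in the complement of $U$; what matters is that fibrewise over $S$ the support misses the generic point of $X$. Moreover the base $S$ is an arbitrary scheme mapping to the moduli space --- it may be a fat point landing entirely in a boundary stratum --- so no density argument in the moduli space can be run on $S$ itself. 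The paper's route is the correct one and is the one you only gesture at in your ``fallback'' case: by \cite[Lemma 5]{FantechiPand} one reduces to $S$ a geometric point, and there one observes that $[\O_C\overset{\sigma}{\to}L]$ is exact away from the locus $Y\subset C$ where $f$ fails to be \'etale (since $Y=\Supp(\ker\sigma)\cup\Supp(\mathrm{coker}\,\sigma)$), and $f(Y)$ is a finite set of points of $X$ even when $Y$ contains whole contracted components; hence $Rf_*[\sigma]$ has cohomology supported on finitely many points of $X$.

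Your identified ``main obstacle'' is also resolved incorrectly: $\sigma$ is \emph{not} fibrewise nonzero on a contracted component $B$. Since $df=0$ on $B$, the canonical section $\delta$ vanishes identically there (the local form at an attaching node restricts to $0$ on the contracted branch), so $\sigma^r|_B=0$ and hence $\sigma|_B=0$ on the reduced fibre. This is harmless precisely because $f(B)$ is a single point of $X$, so it contributes nothing to the support at the generic point of $X$ --- but your proposed proof that $\delta\neq 0$ on $B$ is false and the argument built on it would not close. The perfectness part of your proposal is essentially fine and matches the paper in spirit, though the paper does the work explicitly: it passes to the coarse space using exactness of $\gamma_*$, resolves each term by locally free sheaves via an ample bundle on the fibres of $X\times S\to S$, and uses that those fibres are one-dimensional to see the kernels are locally free.
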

\begin{proof}
Recall that $F$ factors via the forgetful map to the coarse space $F = \overline{F} \circ \gamma$ where $\overline{F}:= (\overline{f},\overline{\rho})$. Also recall that $\gamma_*$ is an exact functor, so we have the quasi-isomorphism $RF_*[\O_C\overset{\sigma}{\longrightarrow} L] \cong R\overline{F}_*[\O_{\overline{C}} \overset{\gamma_*\sigma}{\longrightarrow} \gamma_*L]$. $\overline{F}$ has finite tor-dimension, so $R\overline{F}_*[\gamma_*\sigma]$ is quasi-isomorphic to a finite complex of quasi-coherent sheaves on $X\times S$ flat over $S$. Denote this complex by $E^\bullet$.\\

Perfect is a local property so we can assume that $S=\Spec A$. Also, let $pr_1: X\times S\rightarrow X$ and $pr_2: X\times S\rightarrow S$ be the natural projections. Thus we have that $M:=pr_1^* \O_X(1)$ is an ample line bundle on the fibres of $pr_2$.  Then for sufficiently large $n$ we have for each $E_i$ the following properties:
\begin{enumerate}
\item $A_i :=\rho^* \rho_*( E_i\otimes M^{n})\otimes M^{\SmNeg n}$ is locally free. 
\item The natural map $a_i:A_i \longrightarrow E_i$ is surjective.
\end{enumerate}
Let $K_i = \ker a_i$ and note that these sheaves are all flat over $S$. Hence, restricting to the fibres of $s\in S$ we have an exact sequence
\[
0 \longrightarrow (K_i)_s \longrightarrow (A_i)_s \overset{a_i}{\longrightarrow} (E_i)_s \longrightarrow 0.
\]
We have $pr_2$ is smooth of relative dimension $1$ so any module on the fibres has homological dimension at most $1$. Thus showing that $(K_i)_s$ is locally free and hence $K_i$ is locally free. A finite complex of locally free sheaves quasi-isomorphic to $E^\bullet$ can be constructed from the total complex associated to the double complex of these resolutions. \\

By  \cite[Lemma 5]{FantechiPand} we can show $RF_* [\sigma]$ is torsion on $X\times S$ by showing is when $S$ is a point. Define $Y\subset C$ to be the locus where $f$ is not \'{e}tale and $Z= f(Y)\subset C$. Note that $Z$ is a finite collection of points in $X$. Define $\widetilde{C} = C\setminus Y$  with inclusion $j:\widetilde{C} \rightarrow C$ and $\widetilde{X} = X\setminus Z$  with inclusion $i:\widetilde{X} \rightarrow X$. We also have that $Y = \Supp(\mathrm{ker}\,\sigma)\cup \Supp(\mathrm{coker}\,\sigma)$ so $[j^*\sigma]$ is exact. Letting $\widetilde{f}=f|_{\widetilde{C}}$ be the restriction we have $i^* Rf_* [\sigma] = R\widetilde{f}_* [j^*\sigma]$. Hence, $i^* Rf_* [\sigma]$ is exact also, showing that the cohomology of $Rf_*[\sigma]$ is supported on points.
\end{proof}

\begin{lemma}\label{existence_of_regular_sections}
Let $S=\Spec\,A$ be Noetherian and let $E$ be a line bundle on $C$. There exists a $\rho$-relative line bundle $M$ on $C$ such that $H^0(C,M)$ and $H^0(C,M\otimes E)$ contain sections which define injective morphisms. 
\end{lemma}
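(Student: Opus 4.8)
The plan is to produce the line bundle $M$ as a sufficiently positive twist, so that both $M$ and $M\otimes E$ become globally generated with enough sections to avoid the ``bad'' locus of $C$. First I would reduce to a statement about the coarse space. Since $C$ is an $r$-prestable curve with coarse moduli map $\gamma: C\to \overline{C}$ and $\gamma_*$ is exact and preserves global sections, it suffices to construct a line bundle on $\overline{C}$ and then pull back; alternatively one works directly on $C$ but uses an ample line bundle pulled back from $\overline{C}$ (as in the previous lemma, where $M = pr_1^*\O_X(1)$ played this role). So fix an $S$-relative ample line bundle $A$ on $\overline{C}$ (e.g. $\overline{\rho}^*(\text{ample})\otimes \omega^{\log}$ or the canonical polarization of the stable map), pull it back to an $S$-relative line bundle on $C$, and take $M = A^{\otimes n}$ for $n\gg 0$.

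The key steps, in order, are as follows. (1) By relative Serre vanishing / cohomology-and-base-change over the Noetherian affine $S = \Spec A$, choose $n$ large enough that $R^1\rho_*(M) = R^1\rho_*(M\otimes E) = 0$ and both $\rho_*M$ and $\rho_*(M\otimes E)$ are locally free, commuting with base change, with $M$ and $M\otimes E$ globally generated on every fibre. (2) A section $s\in H^0(C,M)$ defines an injective morphism $\O_C\to M$ precisely when $s$ does not vanish on any associated point of $C$ — equivalently, on a genus-zero nodal curve or more generally, when $s$ is nonzero on each irreducible component of each fibre (the fibres are reduced, so the associated points are the generic points of components). (3) Because $M$ is globally generated and $S$ is affine Noetherian, a general section (or a suitably chosen explicit combination of a generating set of sections) avoids the finitely many generic points of the components of the fibres; here one may either invoke a prime-avoidance / general-position argument fibrewise and then spread out, or, to get a single section valid over all of $S$ at once, choose $n$ large enough that $\rho_*M \to \bigoplus (\text{fibres at generic points of components})$ is surjective and pick a lift of a unit in that finite product. (4) Run the same argument for $M\otimes E$, which is also globally generated for the same (or a larger) $n$. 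Then $M$ works for both.

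I would organize the avoidance argument to treat $M$ and $M\otimes E$ simultaneously: enlarge $n$ until both line bundles have the vanishing/global-generation properties, and note the set of generic points of irreducible components of $C$ over $S$ is the same finite set for both twists, so a single sufficiently-general choice works. For the relative (degenerated target) case of $\M = \Mbar_g(X,\mu)$ nothing changes: the domain curve $C$ is still an $r$-prestable curve flat over $S$, and the construction of $M$ makes no reference to the target, so the argument is identical.

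The main obstacle I expect is step (3): producing a \emph{single} section over the whole base $S$ that is regular (injective as $\O_C\to M$) on every fibre, rather than merely a section on each fibre. Fibrewise general position is easy, but patching requires either that $S$ be affine (which is given, so one can argue that after twisting high enough the restriction map from global sections onto the finite-dimensional product of stalks at the generic points of all components of all fibres is surjective, and then lift a unit) or a Bertini-type genericity statement over $S$. The cleanest route is: choose $n$ so large that for the finite closed subscheme $W\subset C$ cut out by the (finitely many, after possibly stratifying $S$, or handled uniformly by a single ample twist) generic points of the fibre components we have $H^1(C, M\otimes I_W)=0$ and likewise for $M\otimes E$; then $H^0(C,M)\to H^0(W, M|_W)$ is surjective, $M|_W$ is a line bundle on the reduced finite scheme $W$, and any section restricting to a nowhere-vanishing section of $M|_W$ is the desired injective morphism. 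The analogous choice works for $M\otimes E$, completing the proof.
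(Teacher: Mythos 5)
Your proposal is correct and follows essentially the same route as the paper: take $M$ to be a high power of the relative ample bundle $\omega_{\overline{\rho}}\otimes\overline{f}^*\O_X(3)$ pulled back from the coarse space, use global generation plus the finiteness of the associated points of $C$ to run an avoidance argument, and transfer between $C$ and $\overline{C}$ via the exactness of $\gamma_*$. Your step (3), making the avoidance work uniformly over the affine Noetherian base by surjecting onto a finite subscheme and lifting a unit, is a more explicit version of what the paper dismisses as ``a standard argument,'' but it is the same idea.
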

\begin{proof}
Let $G$ be the bundle $\omega_{\overline{\rho}} \otimes \overline{f}^* \O_X(3)$ which is an ample $\overline{\rho}$-relative line bundle on $\overline{C}$. Let $n\in\mathbb{N}$ be large enough that both $G^N$ and $\overline{E}\otimes G^N$ are generated by global sections. We claim that $M := \gamma^* G^N$ has the desired properties. To see this note that $C$ is quasi-compact and so has a finite number of associated points. A standard argument then shows that the subspaces of $H^0(\overline{C},G^N)$ and $H^0(\overline{C},\overline{E}\otimes G^N)$ which are not injective morphisms will then have strictly lower dimension. Then consider the isomorphism
\[
\gamma_*: H^0(C, E\otimes M) \overset{\sim}{\longrightarrow} H^0(\overline{C}, \overline{E}\otimes G^N)
\]
and consider the pre-image $s$ of a regular section $\overline{s}\in H^0(\overline{C}, \overline{E}\otimes G^N)$. $\gamma_*$ is an exact functor and $\gamma_* K = 0$ if and only if $K=0$. Hence we have that $s$ is injective if and only if $\overline{s}$ is.
\end{proof}

\begin{lemma}\label{difference_of_divisors_is_zero}
Let $\widetilde{M}$ be a relative line bundle on $C$ and an injective morphism $s: \O_C \rightarrow  \widetilde{M}$. Let $D$ be the divisor on $C$ defined by $s^\vee$. Then
\[
\mathtt{D}:= \div\Big(RF_*\big[\O_D\otimes\O_C \overset{\mathrm{id}\otimes \sigma^k}{\longrightarrow } \O_D\otimes L^k\big]\Big) = 0.
\]
\end{lemma}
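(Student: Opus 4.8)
The plan is to show that the complex $\bigl[\O_D\otimes\O_C \overset{\mathrm{id}\otimes\sigma^k}{\longrightarrow}\O_D\otimes L^k\bigr]$ is, after pushing forward by $F$, a torsion complex whose divisor vanishes because the morphism is supported on a divisor $D$ living in the étale locus of $f$, where $\sigma$ (hence $\sigma^k$) is an isomorphism. First I would reduce to the case where $S$ is a point, using property (5) of the divisor construction (compatibility with base change) together with \cite[Lemma 5]{FantechiPand}, exactly as in the proof of the preceding lemma: it suffices to check that $RF_*[\,\cdot\,]$ is torsion on $X\times S$, and torsion-ness can be tested fibrewise. So assume $S=\Spec\,\C$.

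Next I would locate the support. Since $s:\O_C\to\widetilde M$ is injective, $D=\div(s^\vee)$ is an effective Cartier divisor on $C$, and — since $\widetilde M=\gamma^*G^N$ is pulled back from the coarse space and the regular section was chosen generically (Lemma \ref{existence_of_regular_sections}) — $D$ may be taken to avoid the finitely many special points of $f$, i.e. $D$ is contained in $\widetilde C = C\setminus Y$ where $Y$ is the non-étale locus. On $\widetilde C$ we have $Y=\Supp(\ker\sigma)\cup\Supp(\mathrm{coker}\,\sigma)=\varnothing$, so $\sigma|_{\widetilde C}:\O_{\widetilde C}\to L|_{\widetilde C}$ is an isomorphism, hence so is $\sigma^k|_{\widetilde C}$. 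Tensoring the isomorphism $\mathrm{id}\otimes\sigma^k$ with $\O_D$ (whose support lies in $\widetilde C$) gives an isomorphism of sheaves on $C$, so the two-term complex $[\O_D\otimes\O_C\overset{\mathrm{id}\otimes\sigma^k}{\longrightarrow}\O_D\otimes L^k]$ is quasi-isomorphic to the zero complex. Pushing forward by $F$ preserves this, so $RF_*[\,\mathrm{id}\otimes\sigma^k\,]\cong 0$ in the derived category, and by property (1) of the divisor construction $\mathtt D=\div(0)=0$.

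The only point requiring care — and the step I would expect to be the main obstacle — is the claim that $D$ can genuinely be arranged to miss $Y$. Lemma \ref{existence_of_regular_sections} produces an $M$ with an injective global section, but a priori such a section could vanish on some special point; I would argue that since $Y$ is a finite set of reduced points and $G^N$ is globally generated, the sections of $H^0(\overline C,\overline E\otimes G^N)$ vanishing at a prescribed point form a proper linear subspace, so a generic regular section avoids all of $Y$, and this is what one should build into (or immediately after) Lemma \ref{existence_of_regular_sections}. Alternatively, if $D$ cannot be assumed disjoint from $Y$, one can instead filter $\O_D$ along $D\cap Y$ and use additivity on distinguished triangles (property (4)) together with the fact that $RF_*$ of the piece supported away from $Y$ is already exact, reducing to the local contribution at the special points; but this is heavier and I would only fall back on it if the genericity argument runs into trouble with non-reducedness of $D$. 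Either way, once the support statement is in hand, the vanishing of $\mathtt D$ is immediate from the derived-category invariance of $\div$.
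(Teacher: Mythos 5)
There is a genuine gap, and it sits exactly at the two points you yourself flag as needing care. First, the reduction to $S=\Spec\,\C$ is not available for the vanishing statement itself. Base change (property (5)) lets you compute the restriction $\mathtt{D}_z$ to each geometric fibre, but $\mathtt{D}$ is a priori only a difference of effective divisors, and a Cartier divisor on $X\times S$ can restrict to zero on every geometric fibre without being zero (for instance the divisor of $(x+\epsilon)/x$ on $\P^1\times\Spec\,\C[\epsilon]/(\epsilon^2)$). Fibrewise arguments suffice only after one knows $\mathtt{D}$ is a \emph{relative effective} Cartier divisor, since then its degree is locally constant and degree zero forces vanishing. Second, the genericity claim fails in families: the lemma must apply to the sections produced by Lemma \ref{existence_of_regular_sections} over an arbitrary Noetherian base $S$ (that is how it is used in Corollary \ref{divisor_tensor_product}), and there the non-\'{e}tale locus $Y$ and the divisor $D$ are both of codimension one in $C$, so for $\dim S\geq 1$ they meet in codimension two and cannot in general be made disjoint by any choice of section. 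Hence neither strengthening Lemma \ref{existence_of_regular_sections} nor invoking it as stated will give you $D\cap Y=\varnothing$, and your primary argument collapses outside the case of a zero-dimensional base.

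The paper's proof avoids both problems and is closer to your fallback than to your main line. By additivity on distinguished triangles, the divisor of a two-term torsion complex $[A\to B]$ depends only on $A$ and $B$, not on the map between them. Since $\overline{D}$ is quasi-finite and proper, hence finite, over $S$, it is affine, so $(\gamma|_D)_*(\O_D\otimes L^k)$ is globally generated and admits a section $\Phi$ defining an injective morphism; replacing $\mathrm{id}\otimes\sigma^k$ by $\Phi$ does not change the divisor and exhibits $\mathtt{D}=\div\big(\overline{F}_*j_*\,\mathrm{coker}\,\Phi\big)$ as a relative effective Cartier divisor. Its degree is then locally constant and is computed to be zero on geometric fibres, whence $\mathtt{D}=0$. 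Your alternative suggestion (filter $\O_D$ along $D\cap Y$ and use additivity) correctly identifies additivity as the key tool, but as written it still presupposes the unjustified reduction to a point.
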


\begin{proof}
We first show that $\mathtt{D}$ is an effective divisor on $X\times S$ by considering the case where $S=\Spec A$. Note that the map forgetting the stack structure $\gamma:C\rightarrow \overline{C}$ has  the property that $\gamma_*$ is left exact. Also, $\O_{\overline{D}}$ is supported in relative dimension $0$, so $\mathtt{D}$ is given by
\[
\mathtt{D}= \div\Big(\overline{F}_*\big[\O_{\overline{D}} \,\longrightarrow  \gamma_*(\O_D\otimes L^k) \big]\Big).
\]

We have that $i:D\rightarrow C$ is a relative effective divisor on $C$ with coarse space $j:\overline{D}\rightarrow \overline{C}$. The natural map $\phi: \overline{D}\rightarrow S$ is quasi-finite and proper so it is also finite. Thus $\overline{D}$ is affine which shows that $(\gamma|_D)_*(\O_D\otimes L^k)$ is generated by global sections and so has sections which give injective morphism. \\

Let $\Phi$ be a section of $(\gamma|_D)_*(\O_D\otimes L^k)$ giving rise to an injective morphism. Then since  the divisor construction is additive on distinguished triangles we have:
\[
\mathtt{D}= \div\Big(\overline{F}_* j_*[\Phi ]\Big)
\]
Also,  $\Phi$ is regular so it is injective and we have $\mathtt{D}= \div( \overline{F}_* j_*\mathrm{coker}\,\Phi )$. Hence showing that it is a relative effective Cartier divisor. \\

The degree of a relative effective Cartier divisor for a smooth morphism is locally constant. Hence we can compute the degree at geometric points. We see that the degree of $(\mathtt{D})_z$ is zero for geometric points $z\in S$. 
\end{proof}

\begin{corollary}\label{divisor_tensor_product}
There is an equality of divisors
\[
\div\big(RF_*[\O_C \overset{\sigma^k}{\rightarrow} L^k]\big) = \div\big(RF_*(L \otimes [\O_C \overset{\sigma^k}{\rightarrow} L^k])\big)
\]
\end{corollary}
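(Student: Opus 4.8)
The plan is to realise both complexes $[\O_C\overset{\sigma^k}{\longrightarrow}L^k]$ and $L\otimes[\O_C\overset{\sigma^k}{\longrightarrow}L^k]=[L\overset{\sigma^k}{\longrightarrow}L^{k+1}]$ as subcomplexes of one common two-term complex of line bundles whose remaining quotient is supported in relative dimension zero, and then to conclude by additivity of the divisor construction together with Lemma \ref{difference_of_divisors_is_zero}. First I would note that the asserted equality of Cartier divisors on $X\times\M^{[1/r]}$ is local on $\M^{[1/r]}$ and, by compatibility of $\div$ with base change and flat base change for $RF_*$, may be checked over an affine atlas; so we may assume $S=\Spec\,A$ is Noetherian. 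Then Lemma \ref{existence_of_regular_sections}, applied to the line bundle $E=L^\vee$, produces a $\rho$-relative line bundle $M$ on $C$ equipped with an injective morphism $s:\O_C\hookrightarrow M$ and, after twisting an injective section of $M\otimes L^\vee$ by $L$, an injective morphism $t:L\hookrightarrow M$. Both cokernels $\mathrm{coker}\,s$ and $\mathrm{coker}\,t$ are then the restriction of $M$ to a relative effective Cartier divisor of $C$, hence are flat over $S$ and supported in relative dimension zero.

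Next I would tensor the two short exact sequences $0\to\O_C\overset{s}{\to}M\to\mathrm{coker}\,s\to 0$ and $0\to L\overset{t}{\to}M\to\mathrm{coker}\,t\to 0$ with the locally free complex $[\O_C\overset{\sigma^k}{\longrightarrow}L^k]$ — exactness is preserved since $\O_C$ and $L^k$ are flat — and apply $RF_*$. This yields two distinguished triangles with common middle term $RF_*[M\overset{\sigma^k}{\longrightarrow}M\otimes L^k]$, outer terms $RF_*[\O_C\overset{\sigma^k}{\longrightarrow}L^k]$ and $RF_*[L\overset{\sigma^k}{\longrightarrow}L^{k+1}]$ respectively, and ``correction'' terms $RF_*(\mathrm{coker}\,s\otimes[\O_C\overset{\sigma^k}{\longrightarrow}L^k])$ and $RF_*(\mathrm{coker}\,t\otimes[\O_C\overset{\sigma^k}{\longrightarrow}L^k])$. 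By the same reasoning already used to show $RF_*[\O_C\overset{\sigma}{\to}L]$ is a perfect torsion complex, all six complexes are perfect torsion complexes on $X\times S$, so $\div$ is defined on each and is additive along the triangles. Hence it suffices to show that the two correction terms have vanishing associated divisor.

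This vanishing is the heart of the matter and I expect it to be the only genuine obstacle: it is exactly the content of Lemma \ref{difference_of_divisors_is_zero}, except that $\mathrm{coker}\,s$ and $\mathrm{coker}\,t$ are structure sheaves of relative effective Cartier divisors twisted by $M$, rather than $\O_D$ on the nose. The proof of Lemma \ref{difference_of_divisors_is_zero} nonetheless applies verbatim: such a cokernel is flat over $S$ and supported in relative dimension zero, so $RF_*$ of its tensor with $[\O_C\overset{\sigma^k}{\longrightarrow}L^k]$ is computed by $F_*$, the result is a $\mathrm{pr}_2$-relative effective Cartier divisor on $X\times S$, and its degree — locally constant, hence computable at a geometric point $z\in S$ — equals $\ell\big((\mathrm{coker}\,s\otimes L^k)_z\big)-\ell\big((\mathrm{coker}\,s)_z\big)=0$, since tensoring by the line bundle $L^k$ preserves length; the same computation disposes of $\mathrm{coker}\,t$. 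Combining the two triangles then gives
\[
\div\big(RF_*[\O_C\overset{\sigma^k}{\longrightarrow}L^k]\big)=\div\big(RF_*[M\overset{\sigma^k}{\longrightarrow}M\otimes L^k]\big)=\div\big(RF_*(L\otimes[\O_C\overset{\sigma^k}{\longrightarrow}L^k])\big),
\]
which is the assertion. Everything beyond the length computation is formal manipulation with distinguished triangles and the divisor calculus established above.
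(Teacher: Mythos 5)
Your proposal is correct and follows essentially the same route as the paper: the same auxiliary bundle $M$ from Lemma \ref{existence_of_regular_sections}, the same two distinguished triangles comparing $[\sigma^k]$ and $L\otimes[\sigma^k]$ to the common middle term $[\sigma^k]\otimes M$, and the same appeal to Lemma \ref{difference_of_divisors_is_zero} to kill the two cones. If anything you are slightly more careful than the paper in noting that the cokernels are $\O_D\otimes M$ rather than $\O_D$ on the nose and checking that the length argument is unaffected by this twist.
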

\begin{proof}
We have that $\div$ is additive on exact sequences. So, to show that two sequences give the same divisor, it will suffice to show that the cone of a morphism between the two complexes is the zero divisor.\\

We have two distinguished triangles coming from injective sections of $M$ and $M\otimes L^{-1}$, where $M$ is the line bundle from lemma \ref{existence_of_regular_sections}:
\[
[\sigma^k] \overset{s_1}{\longrightarrow} [\sigma^k]\otimes M \longrightarrow \mathrm{Cone}(s_1) \longrightarrow [\sigma^k] [1]
\]
\[
[\sigma^k]\otimes L \overset{s_2}{\longrightarrow} [\sigma^k]\otimes M \longrightarrow \mathrm{Cone}(s_2) \longrightarrow [\sigma^k] [1]
\]
We saw in the lemma \ref{difference_of_divisors_is_zero} that $\div(\mathrm{Cone}(s_1)) = \div(\mathrm{Cone}(s_2)) =0$ which shows that $[\sigma^k] $ and $[\sigma^k]\otimes L $ have the same divisor. 

\end{proof}

\begin{lemma}\label{distinguished_triangle_cone_composition}
Let $E^\bullet \overset{a}{\rightarrow} G^\bullet \overset{b}{\rightarrow} H^\bullet$ be morphisms in the derived category. Then there is a distinguished triangle:
\[
\mathrm{cone}(a) \longrightarrow \mathrm{cone}(b\circ a) \longrightarrow \mathrm{cone}(b) \longrightarrow \mathrm{cone}(a)[1]
\]
\end{lemma}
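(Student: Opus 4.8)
The plan is to recognize this as precisely the statement of the octahedral axiom (TR4) for the triangulated (derived) category, so the whole task reduces to unwinding what TR4 gives and matching it to the claimed triangle. First I would complete each of the two given morphisms to distinguished triangles: write $\mathrm{cone}(a)$ fitting into $E^\bullet \overset{a}{\to} G^\bullet \to \mathrm{cone}(a) \to E^\bullet[1]$, write $\mathrm{cone}(b)$ fitting into $G^\bullet \overset{b}{\to} H^\bullet \to \mathrm{cone}(b) \to G^\bullet[1]$, and write $\mathrm{cone}(b\circ a)$ fitting into $E^\bullet \overset{b\circ a}{\to} H^\bullet \to \mathrm{cone}(b\circ a) \to E^\bullet[1]$. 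These three triangles exist and are unique up to (non-canonical) isomorphism by the axiom TR1 on completing morphisms to triangles.

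Next I would invoke the octahedral axiom directly: given the composable pair $E^\bullet \overset{a}{\to} G^\bullet \overset{b}{\to} H^\bullet$, TR4 asserts the existence of a distinguished triangle
\[
\mathrm{cone}(a) \longrightarrow \mathrm{cone}(b\circ a) \longrightarrow \mathrm{cone}(b) \longrightarrow \mathrm{cone}(a)[1]
\]
whose morphisms are compatible with the structure maps of the three triangles above (that is, the relevant squares involving $G^\bullet \to \mathrm{cone}(a)$, $H^\bullet \to \mathrm{cone}(b\circ a)$, etc., commute). Since the derived category of an abelian category — in particular the derived categories of sheaves appearing throughout this paper — is triangulated, TR4 applies and yields exactly the claimed triangle. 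For concreteness one can also exhibit the triangle at the level of honest complexes: taking $E^\bullet, G^\bullet, H^\bullet$ to be complexes and $a, b$ genuine chain maps (which one may always arrange by replacing objects with projective or injective resolutions), the mapping cones are the usual mapping cones, and there is an explicit short exact sequence of complexes
\[
0 \longrightarrow \mathrm{cone}(a) \longrightarrow \mathrm{cone}(b\circ a) \longrightarrow \mathrm{cone}(b) \longrightarrow 0
\]
built from the evident inclusions and projections on the graded pieces $E^\bullet[1]\oplus G^\bullet$, $E^\bullet[1]\oplus H^\bullet$, $G^\bullet[1]\oplus H^\bullet$; the associated long exact triangle in the derived category is the one we want, and its connecting map is checked to be the composite $\mathrm{cone}(b)\to G^\bullet[1]\to \mathrm{cone}(a)[1]$.

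There is essentially no obstacle here beyond bookkeeping: the only thing to be careful about is the sign and the precise identification of the connecting morphism $\mathrm{cone}(b)\to\mathrm{cone}(a)[1]$ with the composite of the canonical maps, and the fact that "$\mathrm{cone}$" is only well-defined up to non-canonical isomorphism — but since the statement only asserts existence of \emph{a} distinguished triangle, this ambiguity is harmless. I would therefore keep the proof short, citing the octahedral axiom and, if desired, recording the explicit short exact sequence of mapping cones for the reader's convenience.
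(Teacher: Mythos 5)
Your proposal is correct and is essentially the same argument as the paper's: the paper proves the lemma by exhibiting the standard $4\times 4$ commuting diagram whose rows and columns are distinguished triangles, which is precisely the octahedral axiom you invoke, and your explicit short exact sequence of mapping cones is the usual way to verify it. No gaps.
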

\begin{proof}
The result follows immediately from the following commuting diagram with distinguished triangles for rows and columns:
\[
\xymatrix@R=1.75em{
E^\bullet \ar[r]^{\mathrm{id}}\ar[d]^{a} & E^\bullet \ar[r]\ar[d]^{b\circ a}& 0 \ar[r] \ar[d]& E^\bullet[1] \ar[d]^{a[1] }\\
G^\bullet \ar[r]^{b}\ar[d] & H^\bullet \ar[r]\ar[d]& \mathrm{cone}(b) \ar[r] \ar[d]& G^\bullet[1] \ar[d]\\
\mathrm{cone}(a) \ar[r] \ar[d]& \mathrm{cone}(b\circ a) \ar[r]\ar[d]& \mathrm{cone}(b) \ar[r] \ar[d]& \mathrm{cone}(a)[1]\ar[d] \\
E^\bullet[1] \ar[r]^{\mathrm{id}[1] } & E^\bullet[1]  \ar[r]& 0 \ar[r] & E^\bullet[2] 
}
\]
\end{proof}

\begin{corollary} \label{distinguished_triangle_composition}
Let $a:E\rightarrow G$ and $b: G \rightarrow H$ be morphisms of coherent sheaves. Then there is a distinguished triangle:
\[
\big[\,E\overset{a}{\longrightarrow} G\,\big]
\longrightarrow
\big[\,E\overset{b\circ a}{\longrightarrow} C\,\big]
\longrightarrow
\big[\,G\overset{b}{\longrightarrow} H\,\big] 
\longrightarrow
\big[\,E\overset{a}{\longrightarrow} G\,\big] [1]
\]
where $[a]$, $[b]$ and $[b\circ a]$ are considered to be in degree $[-1,0]$.
\end{corollary}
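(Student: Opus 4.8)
The plan is to deduce this directly from Lemma \ref{distinguished_triangle_cone_composition} by specialising to complexes concentrated in a single degree. First I would regard the coherent sheaves $E$, $G$, $H$ as objects $E^\bullet$, $G^\bullet$, $H^\bullet$ of the derived category of $Z$ placed in degree $0$, with the morphisms between them taken to be $a$ and $b$. The key observation is that for a morphism $f\colon A\to B$ of sheaves viewed in degree $0$, the mapping cone $\mathrm{cone}(f)$ is canonically the two-term complex $[A\overset{f}{\to}B]$ sitting in degrees $[-1,0]$ (up to the usual sign on the differential, which does not affect the isomorphism class in the derived category). Applying this to $a$, $b$ and $b\circ a$ identifies $\mathrm{cone}(a)\simeq[E\overset{a}{\to}G]$, $\mathrm{cone}(b)\simeq[G\overset{b}{\to}H]$ and $\mathrm{cone}(b\circ a)\simeq[E\overset{b\circ a}{\to}H]$, all in degree $[-1,0]$.

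Then I would simply invoke Lemma \ref{distinguished_triangle_cone_composition}, whose conclusion is a distinguished triangle $\mathrm{cone}(a)\to\mathrm{cone}(b\circ a)\to\mathrm{cone}(b)\to\mathrm{cone}(a)[1]$, and transport it along the three identifications above to obtain exactly the asserted triangle. The only point requiring a line of care is checking that the cone-identifications are compatible with the maps appearing in the lemma's triangle, i.e.\ that the natural map $\mathrm{cone}(a)\to\mathrm{cone}(b\circ a)$ corresponds to the evident map of two-term complexes $[E\to G]\to[E\to H]$ induced by $b$ in degree $0$, and similarly for the other two arrows. This is immediate from the explicit description of cones and of the maps in the $3\times 3$ diagram used to prove the lemma.

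I do not anticipate any genuine obstacle here: the statement is a formal consequence of the preceding lemma, and the only bookkeeping is the standard dictionary between two-term complexes in degrees $[-1,0]$ and cones of maps of sheaves, together with keeping track of signs, which are irrelevant up to isomorphism in the derived category. If one preferred to avoid signs altogether, one could instead write out the $3\times 3$ diagram of Lemma \ref{distinguished_triangle_cone_composition} directly for the one-term complexes $E$, $G$, $H$ and simply read off the third row as the desired triangle.
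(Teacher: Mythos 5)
Your proposal is correct and is exactly the paper's argument: the paper simply declares the corollary immediate from Lemma \ref{distinguished_triangle_cone_composition}, and your identification of $\mathrm{cone}(f)$ with the two-term complex $[A\to B]$ in degrees $[-1,0]$ is precisely the dictionary that makes that deduction work. (Note the target of the middle complex in the statement should read $H$ rather than $C$, as you implicitly assumed.)
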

\begin{proof}
The proof is immediate from lemma \ref{distinguished_triangle_cone_composition}.
\end{proof}

\begin{corollary}\label{divisor_power}
We have the equality of divisors on $X\times S$:
\[
\div\Big( RF_*[\O_C\overset{\sigma^r}{\longrightarrow} L^r]\Big) = 
r\cdot \div\Big( RF_*[\O_C\overset{\sigma}{\longrightarrow} L]\Big) 
\]
\end{corollary}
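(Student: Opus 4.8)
The plan is to factor the map $\sigma^r:\O_C\to L^r$ as a composition of $r$ copies of multiplication-type maps and apply additivity of $\div$ on distinguished triangles. First I would write $\sigma^r$ as the composite
\[
\O_C\overset{\sigma}{\longrightarrow} L \overset{\sigma}{\longrightarrow} L^2 \longrightarrow\cdots\longrightarrow L^{r-1}\overset{\sigma}{\longrightarrow} L^r,
\]
where the $k$th arrow is $\mathrm{id}_{L^{k-1}}\otimes\sigma = $ ``tensor $[\O_C\overset{\sigma}{\to}L]$ by $L^{k-1}$''. Applying Corollary \ref{distinguished_triangle_composition} repeatedly (with $E=\O_C$, the composite being $\sigma^k$ at each stage), I get for each $k$ a distinguished triangle relating $[\O_C\overset{\sigma^k}{\to}L^k]$, $[\O_C\overset{\sigma^{k-1}}{\to}L^{k-1}]$ and $L^{k-1}\otimes[\O_C\overset{\sigma}{\to}L]$. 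Pushing forward by $RF_*$ preserves distinguished triangles, and by the previous lemma $RF_*[\O_C\overset{\sigma}{\to}L]$ (and hence each $RF_*[\O_C\overset{\sigma^k}{\to}L^k]$, by induction and the torsion part of the earlier lemma or by the same support argument) is a perfect torsion complex, so $\div$ is defined on all the terms and is additive on the triangle.

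Next I would feed in Corollary \ref{divisor_tensor_product}, which says exactly that tensoring the complex $[\O_C\overset{\sigma^k}{\to}L^k]$ by a line bundle — here $L^{k-1}$ — does not change the divisor of its derived pushforward; more precisely the corollary is stated for tensoring by $L$, but iterating it (or reusing Lemma \ref{difference_of_divisors_is_zero} with $k$ replaced throughout) gives
\[
\div\big(RF_*(L^{k-1}\otimes[\O_C\overset{\sigma}{\to}L])\big)=\div\big(RF_*[\O_C\overset{\sigma}{\to}L]\big)
\]
for every $k$. Combining this with the additivity on the $k$th triangle yields the recursion
\[
\div\big(RF_*[\O_C\overset{\sigma^k}{\to}L^k]\big)=\div\big(RF_*[\O_C\overset{\sigma^{k-1}}{\to}L^{k-1}]\big)+\div\big(RF_*[\O_C\overset{\sigma}{\to}L]\big),
\]
and a trivial induction on $k$ from $k=1$ to $k=r$ gives the claimed equality.

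The main obstacle is the bookkeeping needed to invoke Corollary \ref{divisor_tensor_product} in the form I need: as stated it compares $\div(RF_*[\sigma^k])$ with $\div(RF_*(L\otimes[\sigma^k]))$, whereas in the $k$th triangle the summand that appears is $L^{k-1}\otimes[\O_C\overset{\sigma}{\to}L]$, i.e.\ the single-step complex tensored by a power of $L$, not the $k$-step complex tensored by $L$. I would handle this by noting that Lemma \ref{difference_of_divisors_is_zero} was proved for arbitrary $k$ and an arbitrary injective auxiliary section, so the argument of Corollary \ref{divisor_tensor_product} applies verbatim with $[\sigma^k]$ replaced by $[\sigma]$ and the tensoring line bundle $M,M\otimes L^{-1}$ adjusted to $M,M\otimes L^{-(k-1)}$ (using Lemma \ref{existence_of_regular_sections} to produce regular sections of both). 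Everything else — that the relevant complexes are perfect torsion, that $RF_*$ is exact on triangles, that $\div$ is additive — is already in hand from the lemmas above, so once the tensoring identity is pinned down the induction is routine.
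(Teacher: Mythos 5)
Your proposal is correct and follows essentially the same route as the paper: the paper likewise applies Corollary \ref{distinguished_triangle_composition} to the factorisation $\sigma^{n+1}=\sigma\circ\sigma^{n}$ to obtain the triangle $[\sigma^{n}]\to[\sigma^{n+1}]\to L^{n}\otimes[\sigma]\to[\sigma^{n}][1]$, then uses Corollary \ref{divisor_tensor_product} (via Lemma \ref{difference_of_divisors_is_zero}) to discard the line-bundle twist and concludes by induction. Your extra care in adjusting the auxiliary sections to $M$ and $M\otimes L^{-(k-1)}$ addresses a point the paper glosses over, but it is the same argument.
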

\begin{proof}
From corollary \ref{distinguished_triangle_composition} we have the distinguished triangle:
\[
\big[\,\O_C \overset{\sigma^n}{\longrightarrow} L^n \,\big]
\longrightarrow
\big[\,\O_C\overset{\sigma^{n+1}}{\longrightarrow} L^n\,\big]
\longrightarrow
\big[\,L\overset{\sigma}{\longrightarrow} L^{2}\,\big] 
\longrightarrow
\big[\,\O_C \overset{\sigma^n}{\longrightarrow} L^n \,\big][1].
\]
After applying corollary \ref{divisor_tensor_product} this shows that $\div\big( RF_*[\sigma^{n+1}]\big) = \div\big( RF_*[\sigma^n]\big) +\div\big( RF_*[\sigma]\big)$. The result follows from the induction hypothesis.
\end{proof}

\begin{corollary}
(Theorem \ref{br_theorem}) The divisor $\div( RF_*[\sigma])$ is relative effective and the associated morphism $b_\xi:S\rightarrow \Sym^m(X)$ defines a morphism of stacks:
\[\begin{array}{cccc}
\mathtt{br}:& \M^{[1/r]} &\longrightarrow &\Sym^m(X)\\
&\xi &\longmapsto &b_\xi.
\end{array}
\]
which satisfies the following commutative diagram:
\[
\xymatrix@R=1.75em{
 \M^{[{1/r}]} \ar[r]^{\mathtt{br}\hspace{0.5em}}\ar[d]_{\chi} &\Sym^{m} X \ar[d]^{\Delta}\\
 \M \ar[r]^{br\hspace{1em}} & \Sym^{rm} X  
}
\]
\end{corollary}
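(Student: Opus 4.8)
The plan is to assemble the statement from the intermediate results already established. First, by the preceding lemma, $RF_*[\O_C \overset{\sigma}{\to} L]$ is a perfect torsion complex, so $\div(RF_*[\sigma])$ is defined; I would then argue it is \emph{effective} and \emph{$\mathrm{pr}_2$-relative} of degree $m$. Effectivity does not follow from $[\sigma]$ being a resolution of a sheaf (it need not be), so instead I would invoke Corollary \ref{divisor_power}: $r\cdot\div(RF_*[\sigma]) = \div(RF_*[\O_C\overset{\sigma^r}{\to}L^r])$, and the right-hand side equals $\div(RF_*[\O_C\overset{\delta}{\to}\calR})\otimes L$-type expression — more precisely, using $e:\L^r\overset{\sim}{\to}\calR$ and $e(\sigma^r)=\delta$, the complex $[\O_C\overset{\sigma^r}{\to}L^r]$ is isomorphic to $[\O_C\overset{\delta}{\to}\calR]$, whose direct image has divisor $B_\xi$, the Fantechi--Pandharipande branch divisor, which is relative effective of degree $rm$. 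Since multiplication by $r$ on the group of relative Cartier divisors is injective on the effective cone (a relative Cartier divisor $E$ with $rE$ effective and $rE$ of degree $rm$ forces $E$ effective of degree $m$, checking fibrewise where divisors on a smooth curve are just effective $0$-cycles once one knows $E$ is a genuine divisor), we conclude $\div(RF_*[\sigma])$ is relative effective of degree $m$.

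Next I would promote this fibrewise/affine-local divisor to a morphism $\mathtt{br}:\M^{[1/r]}\to\Sym^m X$. A relative effective Cartier divisor of degree $m$ in $X\times S$ over $S$, with $X$ a smooth curve, is the same data as a morphism $S\to\Sym^m X$ (this is standard: $\Sym^m X=\mathrm{Hilb}^m(X)$ for a smooth curve, and a finite flat length-$m$ subscheme flat over $S$ is classified by a map to the Hilbert scheme). The divisor $\div(RF_*[\sigma])$ was constructed compatibly with base change by Lemma \ref{difference_of_divisors_is_zero}-type arguments and the base-change property of $\div$ (item (5) of the divisor-properties lemma, applied since $RF_*$ commutes with base change here because the complex is perfect and torsion), so the resulting maps $b_\xi$ glue to a morphism of stacks; I would spell this out by checking the cocycle/compatibility condition on overlaps, which is immediate from functoriality of $\div$ under pullback.

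Finally, the commuting square. By definition $\chi$ sends $\xi=(C\to X, L, e, \sigma)$ to the underlying stable map (composed down to $\M$), whose branch divisor under $br$ of \cite{FantechiPand} is $B_{\chi(\xi)}=\div(RF_*[\O_C\overset{\delta}{\to}\calR})$ — strictly, the version on the coarse curve, but $\gamma_*$ exact makes these agree. We showed $B_{\chi(\xi)} = r\cdot \div(RF_*[\sigma]) = r\cdot b_\xi$ as divisors, and $\Delta:\Sym^m X\to\Sym^{rm}X$, $\sum x_i\mapsto \sum r x_i$, is exactly the operation "multiply the divisor by $r$". Hence $br\circ\chi = \Delta\circ\mathtt{br}$ on $S$-points, functorially in $S$, which gives the square.

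\textbf{Main obstacle.} The delicate point is the effectivity and degree of $\div(RF_*[\sigma])$: the complex $[\O_C\overset{\sigma}{\to}L]$ is \emph{not} a resolution of a sheaf in general (its $\calH^{-1}$ can be nonzero where $\sigma$ vanishes on a component — though on the locus of interest it is torsion), so one genuinely needs the $r$th-power comparison with the honest branch divisor plus an injectivity-of-multiplication-by-$r$ argument in the relative Picard/Cartier setting, rather than a direct construction. Everything else is bookkeeping with the divisor formalism of \cite{FantechiPand} and the identification $\Sym^m X=\mathrm{Hilb}^m X$ for the smooth curve $X$.
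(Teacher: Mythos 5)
Your proposal is correct and follows essentially the same route as the paper: identify $[\sigma^r]$ with $[\delta]$ via $e$, invoke the Fantechi--Pandharipande result that $\div(RF_*[\delta])$ is relative effective of degree $rm$, and use Corollary \ref{divisor_power} to divide by $r$ and to obtain the commuting square. The paper leaves implicit the point you spell out (that $rE$ effective forces $E$ effective, checked fibrewise), but this is the same argument, not a different one.
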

\begin{proof}
We have a natural quasi-isomorphism $[\sigma^r] \overset{\sim}{\rightarrow} [\delta]$. It is shown in \cite[3.2]{FantechiPand} that $\div( RF_*[\delta])$ is a relative effective divisor of degree $r m$. Hence, corollary \ref{divisor_power} shows that $\div( RF_*[\sigma])$ is relative effective as well and is of degree $m$. Corollary \ref{divisor_power} also shows that the given diagram is commutative. 
\end{proof}

\subsection{Special Loci of the Moduli Points}

In this subsection we will prove theorem \ref{br_theorem} part \ref{br_theorem_moduli_points} by considering the case when $S = \Spec\,\C$ and examining the ramification properties induced by the $r$th root condition.\\

Following \cite{Vakil_Enum, GraVakil} we will call a \textit{special loci} a connected component where the map $f:C\rightarrow X$ is not \'{e}tale. Then each special locus is one of:
\begin{enumerate}
\item A smooth point of $C$ where $f$ is locally of the form $z\mapsto z^{a+1}$ with $a\in\mathbb{N}$.
\item A node of $C$ such that on each brach $f$ is locally of the form $z\mapsto z^{a_i}$ with $a_i\in\mathbb{N}$.
\item A genus $g$ component $B$ of $C$ where $f|_{B}$ is constant and on the branches of $C$ meeting $B$ the map  $f$ is locally of the form $z\mapsto z^{a_i}$ with $a_i\in\mathbb{N}$.
\end{enumerate}
We can also define a \textit{ramification order} to each type of locus by:
\begin{enumerate}
\item $a$.  \hspace{1cm} (2) $a_1+a_2$. \hspace{1cm} (3) $2g_B-2 + \sum(a_i+1)$.\\
\end{enumerate}

\begin{remark} \label{relative_special_loci}
We use a slightly different definition for stable maps relative to a point $x\in X$. Let $(h: C\rightarrow T, p\rightarrow X)$ be over $S= \Spec\,\C$ in $\Mbar_g^{_{[1/r]}}(X,\mu)$ with $f=p\circ h$. Then a special locus of $f$ will be a connected component where the map $h:C\rightarrow T$ is not \'{e}tale and not in the pre-image of a node of $x$. Everything else is the same. This agrees with lemma \ref{lemma_ramification_morphism_degerated_target} which shows that $\delta$ will be an isomorphism at pre-images of nodes of $T$.
\end{remark}

We will show that the existence of an $r$th root of $\delta$ is equivalent to each of these special loci having ramification order divisible by $r$.\\

Suppose we have $\xi \in \M^{1/r}$ over $S = \Spec\,\C$. Then locally on the coarse space $\overline{C}$ for each of the types of special loci $\overline{\delta}:\O_{\overline{C}} \rightarrow \overline{\calR}_\xi$ is of the form:

\begin{enumerate}
\item $\C[x] \rightarrow \frac{1}{x^a} \C[x]$ given by $a \mapsto a\frac{x^{a}}{x^{a}}$.  
\item $\C[x,y]/(xy) \rightarrow \frac{1}{x^{a_1} - x^{a_2}} \C[x,y]/(xy)$ given by $a \mapsto a\frac{x^{a_1} - x^{a_2}}{x^{a_1} - x^{a_2}}$.
\item At each node $\C[x,y]/(xy) \rightarrow \frac{1}{x^{a_i} - x^{b_i}} \C[x,y]/(xy)$ given by $a \mapsto a\frac{x^{a_i}}{x^{a_i} - x^{b_i}}$.\\
\end{enumerate}

\begin{re} \label{smooth_nodal_rth_root}
The $r$th root condition $\sigma^r = e(\delta)$ forces there to be local roots for special loci of types $1$ and $2$. This forces the divisibility of the ramification order:
\begin{enumerate}
\item[]\textit{For type $1$:} Locally we must have $\sigma$ being of the form $\C[x] \rightarrow \frac{1}{x^{a/r}} \C[x]$ and thus $r$ divides $a$.  
\item[]\textit{For type $2$:} Pulling back from the coarse space via $\gamma$ we see that $\delta$ is of the form $\C[u,v]/(uv) \rightarrow \frac{1}{u^{a_1r} - v^{a_2 r}} \C[u,v]/(uv)$. Then taking the $r$th root we see that $\sigma$ is of the form $\C[u,v]/(uv) \rightarrow \frac{\zeta_r^k}{u^{a_1} - \zeta_r v^{a_2}} \C[u,v]/(uv)$ for some $k \in \mathbb{Z}/r$. However, there are multiplicities $e_1$ and $e_2$ of $L$ at the node with $e_1+e_2 = r$ or $e_1=e_2=0$. Also we have $a_i = e_i + n_ir$. Hence, $r$ divides $a_1+a_2$. 
\end{enumerate}

\end{re}

\begin{re}\label{contracted_component_ramification_equivalence}
We now consider special loci of type $3$. Suppose there is a genus $g$ sub-curve $B$ of $C$ where $f|_{B}$ is constant and on the branches of $C$ meeting $B$, the map  $f$ is locally of the form $z\mapsto z^{a_i}$ with $a_i\in\mathbb{N}$. \\
 
Let $A= C\setminus B$ and  $\alpha: A \sqcup B \rightarrow C$ be the partial normalisation of $C$ separating the contracted component $B$ from $A$. Also, let $p_i$ be the pre-images of the nodes on $A$ and $q_i$ the pre-images on $B$. Finally, let $a_i$ and $b_i$ be the multiplicities of $L$ corresponding to the branches on the nodes on A and B respectively. \\

Now $e$ restricts to an isomorphism $e_B : (L_B)^r \overset{\sim}{\longrightarrow} (\calR_\xi)_B\cong \omega_B(\sum q_i)$. We have a map $\mathsf{g}: B\rightarrow \mathsf{B}$ which forgets the orbifold structure at the points $q_i$. Pushing forward via $\mathsf{g}$ we have the following isomorphism coming from lemma \ref{rth_root_pushforward_divisor_lemma}:
\[
e_{\mathsf{B}} : (L_\mathsf{B})^r \overset{\sim}{\longrightarrow}  \omega_{\mathsf{B}}(\sum q_i-\sum b_i q_i)
\]
Hence, $\omega_{\mathsf{B}}(\sum q_i-\sum b_i q_i)$ must have degree divisible by $r$. Then $r$ divides $2g-2 + \sum(1-bi) $ and also divides $2g-2 + \sum(a_i +1) $. \\
\end{re}

\begin{remark}
To consider the relative case in \ref{contracted_component_ramification_equivalence} we must replace $f:C\rightarrow X$ by $h: C\rightarrow T$. Everything else remains the same.\\
\end{remark}

\begin{re}
To finish the proof of the theorem we to show that such an $r$th root can be constructed if the ramification loci are of the desired form. First we observe that there is a tensor product decomposition of $\delta$:
\[
\delta = \delta_{\mathrm{sm}}\otimes \delta_{\mathrm{n}} \otimes \delta_{\mathrm{cn}}
\]
where $\delta_{\mathrm{sm}}:\O_C\rightarrow R_{\mathrm{sm}}$ and $\delta_{\mathrm{n}}:\O_C\rightarrow R_{\mathrm{n}}$ define the divisors of $\delta$ supported on the smooth locus of $C$ and nodes of $C$ not meeting contracted components. Then $\delta_{\mathrm{cn}}:\O_C\rightarrow R_{\mathrm{cn}}$ is the unique section such that the above decomposition holds. \\

After reversing the reasoning of \ref{smooth_nodal_rth_root} we have $r$th roots $\sigma_{\mathrm{sm}}:\O_C\rightarrow L_{\mathrm{sm}}$ and $\sigma_{\mathrm{n}}:\O_C\rightarrow L_{\mathrm{n}}$ of $\delta_{\mathrm{sm}}$ and $\delta_{\mathrm{n}}$ respectively. For the contracted components $\delta_{\mathrm{cn}}$ the line bundle $R_{\mathrm{cn}}$ will locally of the form  $\frac{1}{u^{a_i r} - v^{b_i r}} \C[u,v]/(uv)$ at the connecting nodes and $\delta_{\mathrm{cn}}$ will be of the form  $1 \mapsto \frac{ u^{a_ir}}{u^{a_ir} - v^{b_i r}}$. Then let $L_{\mathrm{cn}}$ be an $r$th root of $R_{\mathrm{cn}}$ which is locally of the form  $\frac{1}{u^{a_i} - \zeta_r v^{b_i }} \C[u,v]/(uv)$ at the connecting nodes and $\sigma_{\mathrm{cn}}$ will be of the form  $1 \mapsto \frac{u^{a_i}}{u^{a_i} - v^{b_i }}$ and identical to $\delta_{\mathrm{cn}}$ elsewhere.\\

Hence we have proved theorem \ref{br_theorem} part \ref{br_theorem_moduli_points}.
\end{re}

\section{Cotangent Complex of \texorpdfstring{$\Mbar^{_{1/r}}_g(X, d)$}{Mg(X,d)}}\label{pot_section}

\textbf{Section \ref*{pot_section} Notation:} Recall the \hyperlink{notation_main}{notation convention}. The following diagram shows the relationships between the relevant spaces. It is commutative and many of the squares are cartesian. 
\begin{align}
\begin{tikzpicture}[baseline=(current  bounding  box.center), node distance=3.5cm, auto,
  f->/.style={->,preaction={draw=white, -,line width=3pt}},
  d/.style={double distance=1pt},
  fd/.style={double distance=1pt,preaction={draw=white, -,line width=3pt}}]
  \node (11) {$\mathsf{Tot}\,\bm{\pi}_*\L^r$};
  \node [right of=11] (12) {$\calC_{\mathsf{Tot}\,\bm{\pi}_*\L^r}$};
  \node [right of=12] (13) {$\mathsf{Tot}\,\L^r$};
  \node [below of=11, node distance=2.4cm] (21) {$\M^{[r]}$};
  \node [right of=21] (22) {$\calC^{[r]}$};
  \node [right of=22] (23) {$\calC^{[r]}$};
  \node (11b) [right of=11, above of=11, node distance=1.05cm] {$\mathsf{Tot}\,\pi_*\L$};
  \node [right of=11b] (12b) {$\calC_{\mathsf{Tot}\,\bm{\pi}_*\L}$};
  \node [right of=12b] (13b) {$\mathsf{Tot}\,\L$};
  \node [below of=11b, node distance=2.4cm] (21b) {$\M^{[r]}$};
  \node [right of=21b] (22b) {$\calC^{[r]}$};
  \node [right of=22b] (23b) {$\calC^{[r]}$};
  \node [above of =11, node distance=2.4cm] (01) {$\M^{[r]}$};
  \node [right of=01] (02) {$\calC^{[r]}$};  
  \node [right of=01, above of=01, node distance=1.05cm] (01b) {$\M^{[\frac{1}{r}]}$};
  \node [right of=01b] (02b) {$\calC^{[\frac{1}{r}]}$};
 %back horizontal (top \\ bottom)
    \draw[->]  (02b) -> (01b) node[pos=0.7, above]{$\bm{\rho}$};
    \draw[->]  (12b) -> (11b) node[pos=0.7, above]{${\bm{\psi}}$}; \draw[->]  (12b) -> (13b) node[pos=0.5]{$\mathfrak{e}$};  
  \draw[->]  (22b) -> (21b) node[pos=0.7, above]{$\bm{\pi}$}; \draw[d]  (22b) -> (23b) ; 
   %back vertical
 \draw[->] (11b) -> (21b) node[pos=0.7]{$\bm{\beta}$};  \draw[->] (12b) -> (22b) node[pos=0.7]{$\bm{\widehat{\beta}}$}; \draw[->] (13b) -> (23b) node[pos=0.7]{$\bm{\check{\beta}}$}; 
  \draw[->]  (01b) -> (11b) node[pos=0.7]{$\bm{i}$}; \draw[->]  (02b) -> (12b) node[pos=0.3]{$\bm{j}$};
  %front horizontal (top \\ bottom)
  \draw[f->]  (02) -> (01) node[pos=0.4, above]{$\bm{\pi}$};
  \draw[f->]  (12) -> (11) node[pos=0.4, above]{${\bm{\varphi}}$}; \draw[f->]  (12) -> (13) node[pos=0.5]{$\mathfrak{e}'$};  
  \draw[f->]  (22) -> (21) node[pos=0.4, above]{$\bm{\pi}$}; \draw[fd]  (22) -> (23); 
  %front vertical
  \draw[f->] (11) -> (21) node[pos=0.25, align=right, left]{$\bm{\alpha}$};  \draw[f->] (12) -> (22) node[pos=0.25]{$\bm{\widehat{\alpha}}$}; \draw[f->] (13) -> (23) node[pos=0.25]{$\bm{\check{\alpha}}$};
   \draw[f->]  (01) -> (11) node[pos=0.25, align=right, left]{$\bm{i'}$}; \draw[f->]  (02) -> (12) node[pos=0.25, align=right, left]{$\bm{j'}$};
  %diagonal (top \\ bottom)
   \draw[->]  (01b) -> (01) node[pos=0.8, align=left, above]{$\bm{\nu}$}; \draw[->]  (02b) -> (02) node[pos=0.7, align=left, above]{$\bm{\widehat{\nu}}$};
  \draw[->]  (11b) -> (11) node[pos=0.7, align=left, above]{$\bm{\tau}$}; \draw[->] (12b) -> (12) node[pos=0.7, align=left, above]{$\bm{\widehat{\tau}}$}; \draw[->] (13b) -> (13) node[pos=0.7, align=left, above]{$\bm{\check{\tau}}$}; 
  \draw[d] (21b) -> (21) ; \draw[d] (22b) -> (22) ;  \draw[d] (23b) -> (23) ; 
   %EXTRA diagonal 
   \draw[->]  (02b) -> (13b) node[pos=0.45]{$\mathfrak{f}$};   \draw[f->]  (02) -> (13) node[pos=0.45]{$\mathfrak{f}'$};
\end{tikzpicture} 
\label{main_diagram}
\end{align}

Here $\bm{\pi}$ and $\bm{\rho}$ are the universal curves of their respective spaces. The maps $\bm{i}$ and $\bm{i'}$ are the natural inclusions defined by definition \ref{main_moduli_spaces_definitions}.\ref{full_space_part} and equation (\ref{inclusion_definition_delta}) respectively. The maps $\bm{\varphi}$ and $\bm{\psi}$ are the universal curves defined in \ref{totalspace_definition} with $\mathfrak{e}$ and $\mathfrak{e}'$ being the natural evaluation maps defined by equation (\ref{evaluation_map_definition}). The power maps $\bm{\tau}$ and $\bm{\check{\tau}}$ are defined in \ref{power_map_definition} and $\widehat{\bm{\tau}}$ is the pullback by $\bm{\varphi}$ of $\bm{\tau}$. The maps $\bm{\alpha}$, $\widehat{\bm{\alpha}}$, $\bm{\check{\alpha}}$, $\bm{\beta}$, $\widehat{\bm{\beta}}$ and $\bm{\check{\beta}}$ are the natural projection maps. The maps $\bm{j}$ and $\bm{j'}$ are pullbacks of $\bm{i}$ and $\bm{i'}$ by $\bm{\psi}$ and $\bm{\varphi}$ respectively. Lastly, we also define the maps $\mathfrak{f}:=\bm{i}\circ \mathfrak{e}$ and $\mathfrak{f}':=\bm{i'}\circ \mathfrak{e'}$. \\

We denote the expected number of special loci of order $r$ in the generic case by $m = \frac{1}{r} (2g-2-d(2g_X-2))$ in the case $\M=\Mbar_g(X,d)$ and  $m = \frac{1}{r}  (2g-2+l(\mu)+|\mu|(1-2g_X))$ in the case $\M=\Mbar_g(X,\mu)$.

\subsection{Perfect Relative Obstruction Theory}

Recall, that for a proper representable Gorenstein morphism $a: \calX \rightarrow \calY$ of relative dimension $n$ with relative dualising sheaf $\omega_{a}$ and any complexes $\calF^\bullet \in \mathrm{D}(\calX)$ and $\calG^\bullet \in \mathrm{D}(\calY)$ one has the following functorial isomorphism coming from Serre duality (see for example \cite[eq. C.12]{Fourier-Mukai_book}):
\begin{align}
\mathrm{Hom}_{\mathrm{D}(\calX)} \big(Ra_* \calF^\bullet, \calG^\bullet  \big) \overset{\sim}{\longrightarrow} \mathrm{Hom}_{\mathrm{D}(\calY)} \big( \calF^\bullet, a^*\calG^\bullet \otimes \omega_a[n]  \big). 
\end{align}
Hence one obtains the following natural morphism by looking at the pre-image of the identity: 
\begin{align}
Ra_* (a^* \calG^\bullet \otimes \omega_a )[n] \longrightarrow \calG^\bullet. \label{adjoint_map}
\end{align}

Now, consider the following sub-diagram of (\ref{main_diagram}) coming from the topmost horizontal square and the diagonal square: 
\[\xymatrix@=3em{ 
\M^{[1/r]}  \ar[d]_{\bm{\nu}} &\calC^{[1/r]}   \ar[l]_{\bm{\rho}} \ar[d]^{\bm{\widehat{\nu}}} \ar[r]^{\mathfrak{f}} & \mathsf{Tot}\,\L \ar[d]^{\bm{\check{\tau}}}\\
\M^{[r]} & \calC^{[r]}\ar[l]_{\bm{\pi}} \ar[r]^{\mathfrak{f}'} & \mathsf{Tot}\,\L^r
}
\]

There are two natural maps arising from this diagram:
\[
R\bm{\rho}_* (\bm{\rho}^* \bbL_{\bm{\nu}} \otimes \omega_{\bm{\rho}} )[1] \longrightarrow \bbL_{\bm{\nu}}
\hspace{1cm}\mbox{and}\hspace{1cm}
L\mathfrak{f}^*\bbL_{\bm{\check{\tau}}}  \longrightarrow \bbL_{\bm{\widehat{\nu}}} \cong  \bm{\rho}^* \bbL_{\bm{\nu}}.
\]
Combining these two we can we define the following morphism:
\[
\phi_{\bm{\nu}}: R\bm{\rho}_* (L\mathfrak{f}^*\bbL_{\bm{\check{\tau}}} \otimes \omega_{\bm{\rho}} )[1] \longrightarrow \bbL_{\bm{\nu}}.
\]
We will show in this subsection that this morphism is a perfect relative obstruction theory.\\

We will begin by examining a related morphism constructed in the same way. Specifically, we consider the  the following sub-diagram of (\ref{main_diagram}) coming from the middle horizontal squares:
\begin{align}
\begin{array}{c}
\xymatrix@=3em{ 
\mathsf{Tot}\,\bm{\pi}_*\L  \ar[d]_{\bm{\tau}} &\calC_{\mathsf{Tot}\,\bm{\pi}_*\L} \ar[l]_{{\bm{\psi}}} \ar[d]^{\bm{\widehat{\tau}}} \ar[r]^{\mathfrak{e}} &\mathsf{Tot}\,\L  \ar[d]^{\bm{\check{\tau}}} \\
\mathsf{Tot}\,\bm{\pi}_*\L^r & \calC_{\mathsf{Tot}\,\bm{\pi}_*\L^r}\ar[l]_{{\bm{\varphi}}} \ar[r]^{\mathfrak{e}'} & \mathsf{Tot}\,\L^r 
}
\end{array}\label{commuting_lemma_subdiagrams}
\end{align}
As before we have two natural maps 
\[
R{\bm{\psi}}_* ({\bm{\psi}}^* \bbL_{\bm{\tau}} \otimes \omega_{\bm{\psi}} )[1] \longrightarrow \bbL_{\bm{\tau}}
\hspace{1cm}\mbox{and}\hspace{1cm}
L\mathfrak{e}^*\bbL_{\bm{\check{\tau}}}  \longrightarrow \bbL_{\bm{\widehat{\tau}}} \cong  {\bm{\psi}}^* \bbL_{{\bm{\tau}}}.
\]
which combine to obtain the morphism:
\[
\phi_{{\bm{\tau}}}: R{\bm{\psi}}_* (L\mathfrak{e}^*\bbL_{\bm{\check{\tau}}} \otimes \omega_{\bm{\psi}} )[1] \longrightarrow \bbL_{\bm{\tau}}.
\]
The following lemma shows that $\phi_{{\bm{\tau}}}$ is a relative obstruction theory.

\begin{lemma} \label{relative_obstruction_theory_diagram_lemma}
There is a commuting diagram where the rows are distinguished triangles:
{\footnotesize
\[
\hspace{-1em}\xymatrix@C=1em{
L{\bm{\tau}}^* R{\bm{\varphi}}_* \big(\bm{\widehat{\alpha}}^*\L^{\SmNeg r} \otimes \omega_{\bm{\varphi}} \big)[1] \ar[r] \ar[d]^{L{\bm{\tau}}^*\phi_{\bm{\alpha}}}& R{\bm{\psi}}_* \big(\bm{\widehat{\beta}}^*\L^{\SmNeg 1} \otimes \omega_{\bm{\psi}} \big)[1]   \ar[r]\ar[d]^{\phi_{\bm{\beta}}} & R{\bm{\psi}}_* \big(L\mathfrak{e}^*\bbL_{\bm{\check{\tau}}} \otimes \omega_{\bm{\psi}} \big)[1] \ar[d]^{\phi_{{\bm{\tau}}}} \ar[r] &  L{\bm{\tau}}^* R{\bm{\varphi}}_* \big(\bm{\widehat{\alpha}}^*\L^{\SmNeg r} \otimes \omega_{\bm{\varphi}} \big)[2] \ar[d]^{\phi_{{\bm{\tau}}}[1]}\\
L{\bm{\tau}}^* \bbL_{\bm{\alpha}}  \ar[r] & \bbL_{\bm{\beta}}  \ar[r]& \bbL_{{\bm{\tau}}}  \ar[r] &L{\bm{\tau}}^* \bbL_{\bm{\alpha}} [1]
}
\]
}

such that $\phi_{\bm{\beta}}$ and $\phi_{\bm{\alpha}}$ are relative obstruction theories. Moreover, $\phi_{{\bm{\tau}}}$ is also a relative obstruction theory. 
\end{lemma}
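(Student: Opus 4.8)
# Proof Proposal

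\textbf{Overall strategy.} The plan is to build the diagram row by row. The bottom row is the standard distinguished triangle of cotangent complexes associated to the composition $\bm{\beta} = \bm{\tau} \circ \bm{\beta}'$ (where I mean the tower $\mathsf{Tot}\,\bm{\pi}_*\L \to \mathsf{Tot}\,\bm{\pi}_*\L^r \to \M^{[r]}$), pulled back appropriately; more precisely it is the triangle $L\bm{\tau}^*\bbL_{\bm{\alpha}} \to \bbL_{\bm{\beta}} \to \bbL_{\bm{\tau}} \to$. The top row I would obtain by applying $R\bm{\psi}_*\big((-)\otimes\omega_{\bm\psi}\big)[1]$ to the pullback to $\calC_{\mathsf{Tot}\,\bm{\pi}_*\L}$ of the distinguished triangle $L\mathfrak{e}^*\bbL_{\bm{\check\tau}} \to \bbL_{\bm{\widehat\tau}} \to \bbL_{\bm{\widehat\beta}} \to$ associated to $\bm{\widehat\beta} = \bm{\check\tau}\circ(\text{proj})$ — using that $\bbL_{\bm{\widehat\beta}} \cong \bm{\widehat\beta}^*$ of the line bundle term since $\bm{\widehat\beta}$ is a (pullback of an) affine-bundle-type projection $\mathsf{Tot}\,\L \to \calC^{[r]}$, whose cotangent complex is just $\bm{\widehat\alpha}^*\L^{\SmNeg 1}$ in degree $0$, and similarly $\bbL_{\bm{\check\tau}}$-pullback computations identifying $L\bm{\tau}^*R\bm{\varphi}_*(\bm{\widehat\alpha}^*\L^{\SmNeg r}\otimes\omega_{\bm\varphi})[1]$ via base change (the left square of (\ref{commuting_lemma_subdiagrams}) is Cartesian, so cohomology-and-base-change applies). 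The vertical maps $\phi_{\bm\alpha}$, $\phi_{\bm\beta}$, $\phi_{\bm\tau}$ are all the ``adjoint'' maps of the shape (\ref{adjoint_map}), and commutativity of the two squares is functoriality of (\ref{adjoint_map}) in the complex argument.

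\textbf{Key steps, in order.} First I would record that $\bm{\check\tau}:\mathsf{Tot}\,\L \to \mathsf{Tot}\,\L^r$ is, fibrewise, the $r$-th power map $\mathbb{A}^1\to\mathbb{A}^1$, hence a finite flat l.c.i. morphism of relative dimension $0$, so $\bbL_{\bm{\check\tau}}$ is perfect, concentrated in degree $0$, and given by a line bundle on $\mathsf{Tot}\,\L$ (the conormal of the zero-section divisor, pulled back) — this makes $L\mathfrak{e}^*\bbL_{\bm{\check\tau}} \otimes \omega_{\bm\psi}$ a genuine line bundle and the pushforwards perfect of amplitude $[0,1]$. Second, I would identify the three terms of the top row with the displayed expressions: $\bbL_{\bm\beta}$-side via $\bbL_{\bm{\widehat\beta}} \cong \bm{\widehat\beta}^*\L^{\SmNeg 1}$, and the $\bm\alpha$-side by base change along the Cartesian left square of (\ref{commuting_lemma_subdiagrams}), which gives $L\bm\tau^* R\bm\varphi_*(-) \cong R\bm\psi_*(L\bm{\widehat\tau}^*(-))$ and then $L\bm{\widehat\tau}^*(\bm{\widehat\alpha}^{*'}\L^{\SmNeg r}\otimes\omega_{\bm\varphi}) \cong \bm{\widehat\alpha}^*\L^{\SmNeg r}\otimes\omega_{\bm\psi}$. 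Third, I invoke that $\phi_{\bm\alpha}$ and $\phi_{\bm\beta}$ are relative obstruction theories: these are the standard perfect obstruction theories for abelian cones $\mathsf{Tot}\,\bm\pi_*\calF \to \M^{[r]}$ of the form $R\bm\pi_*(\calF^\vee\otimes\omega_{\bm\pi})[1]\to\bbL$, which is well-known (e.g.\ the cone construction in \cite{Intrinsic, ChangLi}); here $\calF = \L$ or $\L^r$, and after pulling back by $\bm\tau$ for $\phi_{\bm\alpha}$. Fourth, having the outer two vertical maps be quasi-isomorphisms on $h^0$ and surjections on $h^{-1}$, the five lemma / long exact sequence of cohomology sheaves of the two triangles forces $\phi_{\bm\tau}$ to be a relative obstruction theory as well; I also need to check $\phi_{\bm\tau}$'s source is perfect in $[-1,0]$, which follows from the identification above plus the amplitude estimates on $R\bm\psi_*$.

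\textbf{Main obstacle.} The genuinely delicate point is the commutativity of the two squares together with the correct identification of the connecting morphisms — i.e.\ that the octahedron / $3\times 3$ arrangement of triangles really does assemble with the adjunction maps $\phi_{(-)}$ as vertical arrows, rather than merely ``a'' set of vertical arrows. Concretely, one must show that the adjoint map (\ref{adjoint_map}) is compatible with the triangle $L\mathfrak{e}^*\bbL_{\bm{\check\tau}}\to\bbL_{\bm{\widehat\tau}}\to\bbL_{\bm{\widehat\beta}}$ upstairs mapping to the triangle $L\bm\tau^*\bbL_{\bm\alpha}\to\bbL_{\bm\beta}\to\bbL_{\bm\tau}$ downstairs after applying $R\bm\psi_*((-)\otimes\omega_{\bm\psi})[1]$; this is functoriality of trace/adjunction in the base, but the bookkeeping of which pullback (by $\bm\psi$, $\bm\varphi$, $\bm\tau$, $\mathfrak{e}$) lands where is where errors hide. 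I would handle it by choosing one ``master'' commuting diagram of spaces — the cube formed by (\ref{commuting_lemma_subdiagrams}) — and deriving every arrow as the image under a single functor of the canonical transitivity triangles for $\bbL$, so that commutativity is automatic from $2$-functoriality of $\bbL$ and naturality of (\ref{adjoint_map}); the remaining content is then purely the perfectness/amplitude check and the cohomology-sheaf diagram chase in the last step.
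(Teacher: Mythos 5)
Your overall strategy is the same as the paper's: split diagram (\ref{commuting_lemma_subdiagrams}) into its two squares, apply the transitivity triangle of cotangent complexes to each, use cohomology-and-base-change along the cartesian left square to rewrite $R\bm{\psi}_*L\bm{\widehat{\tau}}^*(-)$ as $L\bm{\tau}^*R\bm{\varphi}_*(-)$, identify $L\mathfrak{e}^*\bbL_{\bm{\check{\beta}}}\cong\bm{\widehat{\beta}}^*\L^{\SmNeg 1}$ and $L\mathfrak{e}'^*\bbL_{\bm{\check{\alpha}}}\cong\bm{\widehat{\alpha}}^*\L^{\SmNeg r}$, and obtain the vertical arrows from naturality of (\ref{adjoint_map}). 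But your first ``key step'' contains a concretely false assertion: $\bbL_{\bm{\check{\tau}}}$ is \emph{not} a line bundle concentrated in degree $0$. Since $\bm{\check{\tau}}$ is a morphism of smooth affine bundles over $\calC^{[r]}$ which is fibrewise $w\mapsto w^r$, one has $\bbL_{\bm{\check{\tau}}}\cong[\bm{\check{\tau}}^*\bbL_{\bm{\check{\alpha}}}\rightarrow\bbL_{\bm{\check{\beta}}}]$ in degrees $[-1,0]$ with differential locally given by multiplication by $rw^{r-1}$; its $\calH^0$ is a torsion sheaf supported on a thickening of the zero section, not the conormal line bundle. Worse, after pulling back by $\mathfrak{e}$ the differential becomes $r\bm{\sigma}^{r-1}$ (lemma \ref{global_sections_power_lemma}), which vanishes identically on contracted components of the domain curve, so even $\calH^{\SmNeg 1}$ of the pullback need not vanish. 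Hence your claim that the pushforward is automatically of perfect amplitude in a two-term range does not come for free: in the paper the perfectness of $R\bm{\rho}_*(L\mathfrak{f}^*\bbL_{\bm{\check{\tau}}}\otimes\omega_{\bm{\rho}})[1]$ is only established in the theorem \emph{following} this lemma, via a fibrewise $H^0$-vanishing argument for the kernel of $\bm{\sigma}^{r-1}$. For the lemma as stated (obstruction theory, not perfect obstruction theory) this error is not fatal, but the argument you build on it is.

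The concluding step also needs more than you supply. The left vertical map is $L\bm{\tau}^*\phi_{\bm{\alpha}}$, a derived pullback, and the long exact sequence of the triangle of cones requires $\calH^{\SmNeg 1}$, $\calH^{0}$ and $\calH^{1}$ of $\mathrm{cone}(L\bm{\tau}^*\phi_{\bm{\alpha}})=L\bm{\tau}^*\mathrm{cone}(\phi_{\bm{\alpha}})$ to vanish; this is not a tautological consequence of ``$\phi_{\bm{\alpha}}$ is an obstruction theory''. One must observe, as the paper does, that $\mathrm{cone}(\phi_{\bm{\alpha}})$ is represented by a flat complex concentrated in degrees $\leq -2$ (using that $\bm{\alpha}$ is representable, so $\calH^{1}(\bbL_{\bm{\alpha}})=0$), whence its derived pullback also lives in degrees $\leq -2$. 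With that addition, and after correcting the garbled transitivity triangle (``$L\mathfrak{e}^*\bbL_{\bm{\check{\tau}}}\rightarrow\bbL_{\bm{\widehat{\tau}}}\rightarrow\bbL_{\bm{\widehat{\beta}}}$'' should be the triangle $L\bm{\check{\tau}}^*\bbL_{\bm{\check{\alpha}}}\rightarrow\bbL_{\bm{\check{\beta}}}\rightarrow\bbL_{\bm{\check{\tau}}}$ for the factorisation $\bm{\check{\beta}}=\bm{\check{\alpha}}\circ\bm{\check{\tau}}$, pulled back by $\mathfrak{e}$), your argument coincides with the paper's.
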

\begin{proof}
Consider the leftmost square of (\ref{commuting_lemma_subdiagrams}) and note that it is cartesian. The distinguished triangle arising from the cotangent complex gives the following diagram where the rows are distinguished triangles: 
{\footnotesize
\begin{align}
\hspace{-1em}
\begin{array}{c}
\xymatrix@C=1em{
R{\bm{\psi}}_* ({\bm{\psi}}^*L{\bm{\tau}}^* \bbL_{\bm{\alpha}} \otimes \omega_{\bm{\psi}} )[1] \ar[r] \ar[d]^{}& R{\bm{\psi}}_* ( {\bm{\psi}}^*\bbL_{\bm{\beta}} \otimes \omega_{\bm{\psi}} )[1]   \ar[r]\ar[d]^{} & R{\bm{\psi}}_* ({\bm{\psi}}^*\bbL_{{\bm{\tau}}} \otimes \omega_{\bm{\psi}} )[1] \ar[d]^{} \ar[r] &  R{\bm{\psi}}_* ({\bm{\psi}}^*L{\bm{\tau}}^* \bbL_{\bm{\alpha}} \otimes \omega_{\bm{\psi}} )[2] \ar[d]^{}\\
L{\bm{\tau}}^* \bbL_{\bm{\alpha}}  \ar[r] & \bbL_{\bm{\beta}}  \ar[r]& \bbL_{{\bm{\tau}}}  \ar[r] &L{\bm{\tau}}^* \bbL_{\bm{\alpha}} [1]
}
\end{array} \label{Rel_POT_commute1}
\end{align}
}

We also have isomorphisms:
\[
R{\bm{\psi}}_* ({\bm{\psi}}^*L{\bm{\tau}}^* \bbL_{\bm{\alpha}} \otimes \omega_{\bm{\psi}} )
\cong
R{\bm{\psi}}_* L\widehat{{\bm{\tau}}}^* ({\bm{\varphi}}^*\bbL_{\bm{\alpha}} \otimes \omega_{\bm{\varphi}} )
\cong 
 L{\bm{\tau}}^* R{\bm{\varphi}}_* ({\bm{\varphi}}^*\bbL_{\bm{\alpha}} \otimes \omega_{\bm{\varphi}} )
\]
making the first column into the derived pullback of the canonical morphism from equation (\ref{adjoint_map}):
\[
R{\bm{\varphi}}_* ({\bm{\varphi}}^*\bbL_{\bm{\alpha}} \otimes \omega_{\bm{\varphi}} )[1] \longrightarrow \bbL_{\bm{\alpha}}.
\]

Now consider the rightmost square of (\ref{commuting_lemma_subdiagrams}) and note that  it has all morphisms over $\calC$. This gives the following commutative diagram with distinguished triangles as rows, noting that $L\bm{\widehat{\tau}}^* L\mathfrak{e}'^* \bbL_{\bm{\check{\alpha}}} \cong L\mathfrak{e}^* L\bm{\check{\tau}}^*  \bbL_{\bm{\check{\alpha}}}$:

\begin{align}
\begin{array}{c}
\xymatrix@=3em{
 L\bm{\widehat{\tau}}^* L\mathfrak{e}'^*\bbL_{\bm{\check{\alpha}}}\ar[r]\ar[d]& L\mathfrak{e}^*\bbL_{\bm{\check{\beta}}}  \ar[r]\ar[d]& L\mathfrak{e}^*\bbL_{\bm{\check{\tau}}}   \ar[d] \ar[r]&  \ar[d]  L\bm{\widehat{\tau}}^* L\mathfrak{e}'^*\bbL_{\bm{\check{\alpha}}}[1]\\
 L\bm{\widehat{\tau}}^* \bbL_{\bm{\widehat{\alpha}}} \ar[r] & \bbL_{\bm{\widehat{\beta}}} \ar[r]& \bbL_{\bm{\widehat{\tau}}}  \ar[r]&   L\bm{\widehat{\tau}}^* \bbL_{\bm{\widehat{\alpha}}}[1] \\
  L\bm{\widehat{\tau}}^*  {\bm{\varphi}}^*\bbL_{\bm{\alpha}} \ar[r] \ar[u]_\cong & {\bm{\psi}}^* \bbL_{\bm{\beta}} \ar[r]\ar[u]_\cong&{\bm{\psi}}^* \bbL_{{\bm{\tau}}} \ar[u]_\cong \ar[r]&    L\bm{\widehat{\tau}}^*  {\bm{\varphi}}^*\bbL_{\bm{\alpha}}[1] \ar[u]_\cong
}
\end{array} \label{Rel_POT_commute2}
\end{align}

Also, note that $ L\mathfrak{e}^* \bbL_{\bm{\check{\beta}}} \cong  L\mathfrak{e}^* (\bm{\check{\beta}}^*\L^\vee) \cong L(\bm{\check{\beta}} \circ \mathfrak{e})^*\L^\vee \cong \bm{\widehat{\beta}}^* \L^\vee$ and similarly, $ L\mathfrak{e}'^* \bbL_{\bm{\check{\alpha}}}  \cong \bm{\widehat{\alpha}}^* (\L^r)^\vee$. We now obtain the desired diagram by combining (\ref{Rel_POT_commute2}) with (\ref{Rel_POT_commute1}). We denote the appropriate morphisms by $\phi_{{\bm{\tau}}}$, $\phi_{\bm{\beta}}$ and $\phi_{\bm{\alpha}}$. It is shown in \cite[Prop. 2.5]{ChangLi} that $\phi_{\bm{\beta}}$ and $\phi_{\bm{\alpha}}$ are perfect relative obstruction theories.\\

To show that $\phi_{\bm{\tau}}$ is an obstruction theory it will suffice to show that $\calH^{\SmNeg 1}(\mathrm{cone}(\phi_{{\bm{\tau}}}))=\calH^0(\mathrm{cone}(\phi_{{\bm{\tau}}}))=0$. We have that $\bm{\beta}:\mathsf{Tot}\,\bm{\pi}_*\L^r\rightarrow \M$ is representable, so $\calH^{1}(\bbL_{\bm{\beta}}) = 0$ and $\calH^{i}(\phi_{\bm{\beta}}) = 0$ for all $i\geq -1$.  Also, $\mathrm{cone}(\phi_{\bm{\alpha}})$ is quasi-isomorphic to a flat complex $\calF^\bullet$ which is zero in all degrees greater than $-2$. Now by definition $L{\bm{\tau}}^* \mathrm{cone}(\phi_{\bm{\alpha}}) = {\bm{\tau}}^*\calF^\bullet$ also vanished in degrees greater than $-2$, making $\calH^{i}(L{\bm{\tau}}^*\phi_{\bm{\alpha}}) = 0$ for all $i\geq -1$. The result now  follows from taking the cohomology exact sequence of the distinguished triangle of the cones: 
\[\xymatrix@R=0.75em{
&\calH^{\SmNeg1}(\mathrm{cone}( \phi_{\bm{\beta}} ))\ar[r] &\calH^{\SmNeg1}(\mathrm{cone}(\phi_{{\bm{\tau}}})) \ar[r] &\calH^0(\mathrm{cone}(L{\bm{\tau}}^*\phi_{\bm{\alpha}})) \\
\ar[r] &\calH^0(\mathrm{cone}( \phi_{\bm{\beta}})) \ar[r]& \calH^0(\mathrm{cone}(\phi_{{\bm{\tau}}})) \ar[r]& \calH^1(\mathrm{cone}(L{\bm{\tau}}^*\phi_{\bm{\alpha}})).
}
\]
\end{proof}

\begin{lemma}
$\phi_{\bm{\nu}}$ is the composition of $L\bm{i}^* \phi_{\bm{\tau}}$ and the natural differential morphism $L\bm{i}^* \bbL_{\bm{\tau}} \rightarrow \bbL_{\bm{\nu}}$. In particular, $\phi_{\bm{\nu}}$ is a relative obstruction theory. 
\end{lemma}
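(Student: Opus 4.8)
The plan is to recognise $\phi_{\bm{\nu}}$ and $\phi_{\bm{\tau}}$ as outputs of one and the same construction — compose the differential morphism into $\bm{\rho}^{*}\bbL_{\bm{\nu}}$ (resp.\ $\bm{\psi}^{*}\bbL_{\bm{\tau}}$), push it down by $R\bm{\rho}_{*}(-\otimes\omega_{\bm{\rho}})[1]$ (resp.\ $R\bm{\psi}_{*}(-\otimes\omega_{\bm{\psi}})[1]$), and apply the Serre-duality adjunction map (\ref{adjoint_map}) — applied respectively to the $\bm{\nu}$-diagram displayed above and to (\ref{commuting_lemma_subdiagrams}), the former being the base change of the latter along $\bm{i}$. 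First I would record that the square in (\ref{main_diagram}) with vertices $\calC^{[\frac{1}{r}]}$, $\calC_{\mathsf{Tot}\,\bm{\pi}_*\L}$, $\M^{[\frac{1}{r}]}$, $\mathsf{Tot}\,\bm{\pi}_*\L$ and edges $\bm{j},\bm{\psi},\bm{\rho},\bm{i}$ is Cartesian — the universal curve of a fibre product is the fibre product of the universal curves — so $\bm{\rho}$ is the base change of the proper flat morphism $\bm{\psi}$, whence $\omega_{\bm{\rho}}\cong\bm{j}^{*}\omega_{\bm{\psi}}$; and since $\mathfrak{f}=\mathfrak{e}\circ\bm{j}$ with $\bm{\check{\tau}}$ literally the same morphism in both diagrams, $L\mathfrak{f}^{*}\bbL_{\bm{\check{\tau}}}\cong L\bm{j}^{*}L\mathfrak{e}^{*}\bbL_{\bm{\check{\tau}}}$. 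Flat base change then produces a canonical isomorphism $L\bm{i}^{*}R\bm{\psi}_{*}\big(L\mathfrak{e}^{*}\bbL_{\bm{\check{\tau}}}\otimes\omega_{\bm{\psi}}\big)[1]\cong R\bm{\rho}_{*}\big(L\mathfrak{f}^{*}\bbL_{\bm{\check{\tau}}}\otimes\omega_{\bm{\rho}}\big)[1]$, identifying the source of $L\bm{i}^{*}\phi_{\bm{\tau}}$ with that of $\phi_{\bm{\nu}}$.

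Next I would check that the two building blocks of the construction are natural under this base change. For the differential block: pulling back the Cartesian squares of (\ref{commuting_lemma_subdiagrams}) along $\bm{i}$ and using naturality of the cotangent complex together with the base-change isomorphisms $\bbL_{\bm{\widehat{\tau}}}\cong\bm{\psi}^{*}\bbL_{\bm{\tau}}$ and $\bbL_{\bm{\widehat{\nu}}}\cong\bm{\rho}^{*}\bbL_{\bm{\nu}}$, one sees that the morphism $L\mathfrak{f}^{*}\bbL_{\bm{\check{\tau}}}\to\bm{\rho}^{*}\bbL_{\bm{\nu}}$ equals $L\bm{j}^{*}$ of the corresponding morphism for $\bm{\tau}$, followed by $\bm{\rho}^{*}$ of the natural differential $L\bm{i}^{*}\bbL_{\bm{\tau}}\to\bbL_{\bm{\nu}}$ of the Cartesian square of definition \ref{main_moduli_spaces_definitions}. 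For the adjunction block: (\ref{adjoint_map}) is the counit of $(R\bm{\psi}_{*},\bm{\psi}^{!})$ for the proper Gorenstein morphism $\bm{\psi}$ of relative dimension one — equivalently it is assembled from the trace $R\bm{\psi}_{*}\omega_{\bm{\psi}}[1]\to\O$ — and these are compatible with base change, so $L\bm{i}^{*}$ of (\ref{adjoint_map}) for $\bm{\psi}$ is (\ref{adjoint_map}) for $\bm{\rho}$. Combining the two blocks, $\phi_{\bm{\nu}}$ is precisely $L\bm{i}^{*}\phi_{\bm{\tau}}$ postcomposed with $L\bm{i}^{*}\bbL_{\bm{\tau}}\to\bbL_{\bm{\nu}}$: the extra factor appears exactly because the construction outputs a morphism into $\bbL_{\bm{\tau}}$ (resp.\ $\bbL_{\bm{\nu}}$), and those two complexes are related only by that differential, whereas everything upstream of them base-changes strictly.

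For the final clause, recall from the proof of lemma \ref{relative_obstruction_theory_diagram_lemma} that $\mathrm{cone}(\phi_{\bm{\tau}})$ is concentrated in degrees $\le-2$. Since $L\bm{i}^{*}$ is right $t$-exact, $\mathrm{cone}(L\bm{i}^{*}\phi_{\bm{\tau}})\cong L\bm{i}^{*}\mathrm{cone}(\phi_{\bm{\tau}})$ is again concentrated in degrees $\le-2$, so $L\bm{i}^{*}\phi_{\bm{\tau}}$ is an isomorphism on $\calH^{0}$ and $\calH^{-1}$. The natural differential $L\bm{i}^{*}\bbL_{\bm{\tau}}\to\bbL_{\bm{\nu}}$ of a Cartesian square is always an isomorphism on $\calH^{0}$ and a surjection on $\calH^{-1}$ (visible from the truncated cotangent complexes: for $B'=B\otimes_{A}A'$ the natural map $\tau_{\ge-1}\bbL_{B/A}\otimes_{B}B'\to\tau_{\ge-1}\bbL_{B'/A'}$ is an isomorphism in degree $0$ and a surjection in degree $-1$). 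Composing, $\phi_{\bm{\nu}}$ is an isomorphism on $\calH^{0}$ and a surjection on $\calH^{-1}$, hence a relative obstruction theory.

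The step I expect to demand the most care is the base-change naturality of the Serre-duality counit (\ref{adjoint_map}) in the generality of a morphism of algebraic stacks representable by proper Gorenstein curves, together with the bookkeeping that $\mathfrak{e},\mathfrak{e}',\bm{\widehat{\tau}},\bm{\widehat{\nu}},\mathfrak{f},\mathfrak{f}'$ genuinely exhibit (\ref{commuting_lemma_subdiagrams}) as its own pullback along $\bm{i}$ — i.e.\ that the relevant squares of (\ref{main_diagram}) really are Cartesian. Once these compatibilities are in place, both the asserted identity of morphisms and the obstruction-theory conclusion are formal.
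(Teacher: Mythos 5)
Your argument is correct and takes essentially the same route as the paper: identify the source of $\phi_{\bm{\nu}}$ with $L\bm{i}^*$ of the source of $\phi_{\bm{\tau}}$ via the Cartesian squares and base change of the adjunction and differential maps, then deduce the obstruction-theory property from the vanishing of $\calH^{-1}$ and $\calH^0$ of $\mathrm{cone}(\phi_{\bm{\tau}})$ (preserved under $L\bm{i}^*$ by right $t$-exactness) together with the standard property of $L\bm{i}^*\bbL_{\bm{\tau}}\rightarrow\bbL_{\bm{\nu}}$ for a Cartesian square. The only quibble is that a cone concentrated in degrees $\le -2$ gives an isomorphism on $\calH^0$ and a \emph{surjection} (not necessarily an isomorphism) on $\calH^{-1}$ for $L\bm{i}^*\phi_{\bm{\tau}}$, which is all your composition argument actually needs.
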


\begin{proof}
There is a commuting diagram, noting that there is an isomorphism $L\bm{j}^* {\bm{\psi}}^* \bbL_{{\bm{\tau}}} \cong \bm{\rho}^* Lj^* \bbL_{{\bm{\tau}}}$: 
\[
\xymatrix{
L\bm{j}^* L\mathfrak{e}^* \bbL_{\bm{\check{\tau}}} \ar[r] \ar@{=}[d] & L\bm{j}^* \bbL_{\bm{\widehat{\tau}}}  \ar[d] & L\bm{j}^* {\bm{\psi}}^* \bbL_{{\bm{\tau}}} \ar[l]_{\cong} \ar[d]\\ 
L\mathfrak{f}^* \bbL_{\bm{\check{\tau}}} \ar[r] &\bbL_{\bm{\widehat{\nu}}} &\bm{\rho}^*\bbL_{{\bm{\nu}}} \ar[l]_{\cong}
}
\]
which gives the left square of the following diagram after applying the functor $R\bm{\rho}_*( \,\_\,  \otimes\omega_{\bm{\rho}})[1]$ and using the isomorphism of functors $R\bm{\rho}_*( L\bm{j}^*\,\_\,  \otimes\omega_{\bm{\rho}})[1] \cong L\bm{i}^* R{\bm{\psi}}_*( \, \_\, \otimes\omega_{\bm{\psi}})[1]$ :
\[
\xymatrix{
L\bm{i}^* R{\bm{\psi}}_*( L\mathfrak{e}^* \bbL_{\bm{\check{\tau}}} \otimes\omega_{\bm{\rho}})[1] \ar[r] \ar[d]_\cong & L\bm{i}^* R{\bm{\psi}}_*( {\bm{\psi}}^* \bbL_{{\bm{\tau}}} \otimes\omega_{\bm{\psi}})[1] \ar[d] \ar[r] & L\bm{i}^* \bbL_{{\bm{\tau}}}\ar[d] \\ 
R\bm{\rho}_*( L\mathfrak{f}^* \bbL_{\bm{\check{\tau}}} \otimes\omega_{\bm{\rho}})[1]\ar[r] &R\bm{\rho}_*(\bm{\rho}^*\bbL_{{\bm{\nu}}} \otimes\omega_{\bm{\rho}})[1] \ar[r] & \bbL_{\bm{\check{\tau}}}
}
\]
Now, $L\bm{i}^* \phi_{\bm{\tau}}$  is the composition of the top row and $\phi_{\bm{\nu}}$ is the composition of the bottom row. Hence, $\phi_{\bm{\nu}}$ is the composition of the desired morphisms. \\

The maps $\bm{i}$ and $\bm{i'}$ are immersions and ${\bm{\tau}}$ and ${\bm{\nu}}$ are representable, so $L\bm{i}^* \bbL_{\bm{\tau}} \rightarrow \bbL_{\bm{\nu}}$ is a relative obstruction theory (see for example \cite[\S 7]{Intrinsic}). We now consider the distinguished triangle of cones coming from composition of lemma \ref{distinguished_triangle_cone_composition}. The reasoning that $\phi_{\bm{\nu}}$ is a relative obstruction theory is now the same as for $\phi_{\bm{\tau}}$ in the previous lemma, lemma \ref{relative_obstruction_theory_diagram_lemma}.
\end{proof}

\begin{lemma}\label{global_sections_power_lemma}
The left derived pullback by $\mathfrak{f}$ of the map $\bm{\check{\tau}}^*\bbL_{\bm{\check{\alpha}}} \rightarrow \bbL_{\bm{\check{\beta}}}$ is the map:
\[
r\,\bm{j}^*\bm{\sigma}^{r-1} : \bm{j}^*\bm{\widehat{\beta}}^*\L^{\SmNeg r} \longrightarrow \bm{j}^*\bm{\widehat{\beta}}^*{\L}^{\SmNeg 1}
\] 
where $\sigma$ is the universal $r$th root. 
\end{lemma}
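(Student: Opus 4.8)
The plan is to reduce the statement to the elementary computation that the derivative of the $r$th power map on a line is multiplication by $r$ times the $(r-1)$st power of the coordinate, and then pull the result back along $\mathfrak{f}$. First I would identify the cotangent complexes involved. Since $\mathsf{Tot}\,\L^r = \Spec_{\calC^{[r]}}(\Sym^\bullet(\L^r)^\vee)$ and $\mathsf{Tot}\,\L = \Spec_{\calC^{[r]}}(\Sym^\bullet\L^\vee)$ are total spaces of line bundles over $\calC^{[r]}$, the structure morphisms $\bm{\check{\alpha}}$ and $\bm{\check{\beta}}$ are smooth of relative dimension one, so $\bbL_{\bm{\check{\alpha}}} = \Omega_{\bm{\check{\alpha}}} = \bm{\check{\alpha}}^{*}(\L^{r})^{\vee}$ and $\bbL_{\bm{\check{\beta}}} = \Omega_{\bm{\check{\beta}}} = \bm{\check{\beta}}^{*}\L^{\vee}$ are line bundles; in particular every left derived pullback occurring in the statement is underived and it suffices to pull back an honest morphism of line bundles. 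Using $\bm{\check{\alpha}}\circ\bm{\check{\tau}} = \bm{\check{\beta}}$, the source becomes $\bm{\check{\tau}}^{*}\bbL_{\bm{\check{\alpha}}} = \bm{\check{\beta}}^{*}(\L^{r})^{\vee}$, so the natural map $\bm{\check{\tau}}^{*}\bbL_{\bm{\check{\alpha}}}\to\bbL_{\bm{\check{\beta}}}$ is a morphism $\bm{\check{\beta}}^{*}(\L^{r})^{\vee}\to\bm{\check{\beta}}^{*}\L^{\vee}$, equivalently a global section of $\bm{\check{\beta}}^{*}\L^{r-1}$ on $\mathsf{Tot}\,\L$.

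The crux is to identify this section. Trivialising $\L$ by a local generator $\ell$ over an open of $\calC^{[r]}$ gives a fibre coordinate $a$ on $\mathsf{Tot}\,\L$ (dual to $\ell$) and a fibre coordinate $b$ on $\mathsf{Tot}\,\L^{r}$ (dual to $\ell^{\otimes r}$), and by Definition \ref{power_map_definition} the power map $\bm{\check{\tau}}$ is given on functions by $b\mapsto a^{r}$. Hence $\bm{\check{\tau}}^{*}(db) = d(a^{r}) = r\,a^{r-1}\,da$, which says exactly that $\bm{\check{\tau}}^{*}\bbL_{\bm{\check{\alpha}}}\to\bbL_{\bm{\check{\beta}}}$ is multiplication by $r$ times the $(r-1)$st tensor power of the tautological section $\mathtt{taut}\colon\O_{\mathsf{Tot}\,\L}\to\bm{\check{\beta}}^{*}\L$ (locally $\mathtt{taut}=a\,\ell$, and the factors $r\,a^{r-1}$ on overlaps glue precisely as $r\,\mathtt{taut}^{\otimes(r-1)}$ does). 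Carrying out and gluing this local computation is essentially the only mathematical content of the lemma.

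Finally I would pull back along $\mathfrak{f}$. The morphism $\mathfrak{f}\colon\calC^{[1/r]}\to\mathsf{Tot}\,\L$ is, by its construction from the evaluation map $\mathfrak{e}$ of (\ref{evaluation_map_definition}) and the inclusion $\bm{j}$ (see (\ref{main_diagram})), the classifying morphism of the universal $r$th root $\bm{\sigma}\colon\O_{\calC^{[1/r]}}\to\L$; concretely $\mathfrak{f}^{*}\mathtt{taut} = \bm{j}^{*}\bm{\sigma}$ as a section of $\mathfrak{f}^{*}\bm{\check{\beta}}^{*}\L \cong \bm{j}^{*}\bm{\widehat{\beta}}^{*}\L$, where the identification of bundles uses $\bm{\check{\beta}}\circ\mathfrak{e} = \bm{\widehat{\beta}}$ visible in (\ref{main_cone_squares}). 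Since $\mathfrak{f}^{*}$ is monoidal it carries $r\,\mathtt{taut}^{\otimes(r-1)}$ to $r\,(\bm{j}^{*}\bm{\sigma})^{\otimes(r-1)} = r\,\bm{j}^{*}\bm{\sigma}^{r-1}$, and therefore $L\mathfrak{f}^{*}$ of $\bm{\check{\tau}}^{*}\bbL_{\bm{\check{\alpha}}}\to\bbL_{\bm{\check{\beta}}}$ is the asserted morphism $r\,\bm{j}^{*}\bm{\sigma}^{r-1}\colon\bm{j}^{*}\bm{\widehat{\beta}}^{*}\L^{\SmNeg r}\to\bm{j}^{*}\bm{\widehat{\beta}}^{*}\L^{\SmNeg 1}$. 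The step I expect to be the real obstacle is not the computation but the bookkeeping: keeping the several copies of $\L$ — on $\calC^{[r]}$, on $\calC_{\mathsf{Tot}\,\bm{\pi}_*\L}$, and on $\calC^{[1/r]}$ — consistently identified, and verifying $\mathfrak{f}^{*}\mathtt{taut} = \bm{j}^{*}\bm{\sigma}$ cleanly from the moduli descriptions of the spaces involved.
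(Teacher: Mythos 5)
Your proposal is correct and follows essentially the same route as the paper: both reduce to the local computation that the differential of the $r$th power map $b\mapsto a^r$ on fibre coordinates is $r\,a^{r-1}\,da$, identify $\Omega_{\bm{\check{\beta}}}\cong\bm{\check{\beta}}^*\L^\vee$ and $\Omega_{\bm{\check{\alpha}}}\cong\bm{\check{\alpha}}^*\L^{\SmNeg r}$, and then pull back along the map classified by the universal section $\bm{\sigma}$ (the paper writes this as $w\mapsto\sigma_B$ in a chart, which is exactly your identification $\mathfrak{f}^*\mathtt{taut}=\bm{j}^*\bm{\sigma}$). Your packaging via the tautological section is a slightly more invariant way of stating the same gluing the paper performs in equivariant local charts $[\mathbb{A}^1\times U/G]$.
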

\begin{proof}
It will suffice to show this locally. The local situation is described by the diagram
\[
\xymatrix@R=2em{
\mathbb{A}^1\times U \ar[r]\ar[d]^{\mathtt{t}}& [\mathbb{A}^1\times U/G] \ar[r]\ar[d]& \mathsf{Tot}\,\L\ar[d]^{\bm{\check{\tau}}}\\
\mathbb{A}^1\times U \ar[r]\ar[d]^{\mathtt{a}}& [\mathbb{A}^1\times U/G] \ar[r]\ar[d]& \mathsf{Tot}\,\L^r\ar[d]^{\bm{\check{\alpha}}}\\
 U \ar[r]       & [U/G] \ar[r]        & \calC^{[r]}\\
}
\]
where $U=\Spec B$, $G$ is a finite group and $\mathtt{t}$ is defined by the morphism of $B$-algebras $B[z]\rightarrow B[w]$ with $z\mapsto w^r$. On $U$ we have that $\L$ is given by an equivariant line bundle $E$ defined by a local generator $\phi$. Then $\L^r$ corresponds to $E^r$ with $\phi^r$.\\

We have that $\bbL_{\bm{\check{\beta}}} \cong \Omega_{\bm{\check{\beta}}} \cong \bm{\check{\beta}}^*{\L}^{\SmNeg 1}$ and the last morphism is defined locally by $dw \mapsto \frac{1}{\phi}$. Similarly, $\bbL_{\bm{\check{\alpha}}} \cong \Omega_{\bm{\check{\alpha}}} \cong \bm{\check{\alpha}}^*{\L}^{\SmNeg r}$ is defined by $dz \mapsto \frac{1}{\phi^r}$ and the map $d\bm{\check{\tau}}$ is defined by:
\[
\begin{array}{cccc}
d{\mathtt{t}}:& B[w] \otimes A[z]dz& \longrightarrow & A[w] dw\\
& 1\otimes dz & \longmapsto &  rw^{r-1}dw.
\end{array}
\]

Let $\sigma_B \in B$ be the pullback of $\sigma$ to $U$. Locally on $U$ the map $\bm{j}$ is defined by the section $\sigma_B$ via the following map $B[w]\rightarrow B: w\mapsto \sigma_B $. Hence, pulling back via $\bm{j}$ we have that the map $\bm{j}^* d\bm{\check{\tau}}$ is locally defined by:
\[
\begin{array}{cccc}
\bm{j}^* d{\mathtt{t}}:& B \tfrac{1}{\phi^r}& \longrightarrow & B \tfrac{1}{\phi}\\
&  \tfrac{1}{\phi^r}& \longmapsto &   r(\sigma_B)^{r-1} \tfrac{1}{\phi}.
\end{array}
\]
\end{proof}

\begin{theorem}
The map $\phi_{{\bm{\nu}}}: R\bm{\rho}_* (L\mathfrak{f}^*\bbL_{\bm{\check{\tau}}} \otimes \omega_{\bm{\rho}} )[1] \longrightarrow \bbL_{\bm{\nu}}$ is a perfect relative obstruction theory perfect with relative virtual dimension $(1-r)m$. (Recall that $m$ is the dimension of $\M^{[r]}$.)
\end{theorem}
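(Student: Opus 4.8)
The plan is to build on the two preceding lemmas, which already give that $\phi_{\bm{\nu}}$ is a relative obstruction theory, and reduce the claim to two points: that $E^{\bullet}:=R\bm{\rho}_{*}\!\left(L\mathfrak{f}^{*}\bbL_{\bm{\check{\tau}}}\otimes\omega_{\bm{\rho}}\right)[1]$ is perfect of amplitude $[-1,0]$, and that its rank equals $(1-r)m$.

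First I would make $L\mathfrak{f}^{*}\bbL_{\bm{\check{\tau}}}$ explicit. Since $\mathsf{Tot}\,\L^{r}$ and $\mathsf{Tot}\,\L$ are total spaces of line bundles over $\calC^{[r]}$, one has $\bbL_{\bm{\check{\alpha}}}\cong\bm{\check{\alpha}}^{*}\L^{-r}$ and $\bbL_{\bm{\check{\beta}}}\cong\bm{\check{\beta}}^{*}\L^{-1}$; inserting this in the cotangent triangle of $\bm{\check{\beta}}=\bm{\check{\alpha}}\circ\bm{\check{\tau}}$ and invoking Lemma~\ref{global_sections_power_lemma} (which identifies $L\mathfrak{f}^{*}$ of the differential of $\bm{\check{\tau}}$ with multiplication by $r\sigma^{r-1}$) gives, on $\calC^{[1/r]}$, a quasi-isomorphism
\[
L\mathfrak{f}^{*}\bbL_{\bm{\check{\tau}}}\;\simeq\;\big[\,\L^{-r}\xrightarrow{\,r\sigma^{r-1}\,}\L^{-1}\,\big]
\]
in degrees $[-1,0]$, where $\sigma$ is the universal $r$th root. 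Tensoring with $\omega_{\bm{\rho}}$ leaves a two-term complex of line bundles.

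For perfectness I would use that $\bm{\rho}$ is flat, proper and Gorenstein of relative dimension $1$: for any line bundle $N$ on $\calC^{[1/r]}$ relative Serre duality gives $R\bm{\rho}_{*}(N\otimes\omega_{\bm{\rho}})[1]\cong(R\bm{\rho}_{*}N^{\vee})^{\vee}$, which is perfect of amplitude $[-1,0]$. Applying $R\bm{\rho}_{*}(\,\cdot\otimes\omega_{\bm{\rho}})[1]$ to the distinguished triangle $\L^{-r}\to\L^{-1}\to L\mathfrak{f}^{*}\bbL_{\bm{\check{\tau}}}\to$ then presents $E^{\bullet}$ as the cone of a morphism between two complexes of amplitude $[-1,0]$, so a priori $E^{\bullet}$ has amplitude $[-2,0]$ and it suffices to show $\calH^{-2}(E^{\bullet})=0$. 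Since $E^{\bullet}$ is perfect, $\calH^{-2}(E^{\bullet})\otimes\kappa(s)$ injects into $\mathbb{H}^{-1}$ of the fibre of $L\mathfrak{f}^{*}\bbL_{\bm{\check{\tau}}}\otimes\omega_{\bm{\rho}}$ at each point $s$, so by Nakayama it is enough to check that, for every geometric fibre $C$,
\[
\ker\!\big(H^{0}(C,\L^{-r}\otimes\omega_{C})\xrightarrow{\,\sigma^{r-1}\,}H^{0}(C,\L^{-1}\otimes\omega_{C})\big)=0 .
\]
I expect this to be the only point requiring real work. A section $t$ in this kernel must vanish on every component of $C$ on which $\sigma$ is not identically zero; by the special loci analysis of Remarks~\ref{smooth_nodal_rth_root}--\ref{contracted_component_ramification_equivalence} the remaining components are contracted, so on their union $B$ the map $f|_{B}$ is constant and hence $\calR|_{B}\cong\omega_{C}|_{B}$, whence $(\L^{-r}\otimes\omega_{C})|_{B}\cong\O_{B}$. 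As $C$ is connected and not everywhere contracted, each connected piece of $B$ meets the rest of $C$ in a nonempty set of nodes, so $t|_{B}$ is a regular function on $B$ vanishing at those nodes, forcing $t=0$. Therefore $E^{\bullet}$ is perfect of amplitude $[-1,0]$.

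Lastly the rank is computed fibrewise over the genus $g$ curve $C$ by Riemann--Roch and Serre duality:
\[
\mathrm{rk}\,E^{\bullet}=\chi(C,\L^{-r}\otimes\omega_{C})-\chi(C,\L^{-1}\otimes\omega_{C})=\chi(C,\L)-\chi(C,\L^{r})=(1-r)\deg\L ,
\]
and $\deg\L=\tfrac1r\deg\calR=\tfrac1r\big(2g-2-d(2g_{X}-2)\big)=m$ (the analogous $\log$-twisted computation gives the same $m$ in the relative case $\M=\Mbar_{g}(X,\mu)$). Hence $\phi_{\bm{\nu}}$ is a perfect relative obstruction theory of relative virtual dimension $\mathrm{rk}\,E^{\bullet}=(1-r)m$, as claimed.
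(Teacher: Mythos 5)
Your argument is correct, and its crux coincides with the paper's: everything reduces to the fibrewise vanishing of $\ker\bigl(H^0(C_z,f_z^*\omega_X)\xrightarrow{\;\sigma_z^{r-1}\;}H^0(C_z,f_z^*\omega_X\otimes L_z^{r-1})\bigr)$, proved by noting that a kernel section is supported on the $f$-contracted subcurve, where the bundle is trivial and the section must vanish at the connecting nodes. Where you genuinely differ is in how perfectness is extracted from that vanishing. The paper builds an explicit global resolution: it surjects $E_0=f^*\omega_X\otimes L^{r-1}$ by $A_{E_0}=\rho^*\rho_*(E_0\otimes M^n)\otimes M^{-n}$ for $M$ relatively ample, forms the fibre product $G$, uses the vanishing to get $H^0(C_z,G_z)=0$, and concludes that $R\rho_*[G\to A_{E_0}]\cong[R^1\rho_*G\to R^1\rho_*A_{E_0}][-1]$ is a two-term complex of vector bundles. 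You instead use relative duality to see that each of $R\bm{\rho}_*(\L^{-r}\otimes\omega_{\bm{\rho}})[1]$ and $R\bm{\rho}_*(\L^{-1}\otimes\omega_{\bm{\rho}})[1]$ has amplitude $[-1,0]$ and then test the tor-amplitude of the cone on residue fields; this is shorter, while the paper's route hands you an actual global two-term presentation. One technical caveat: the assertion that $\calH^{-2}(E^\bullet)\otimes\kappa(s)$ \emph{injects} into $\mathbb{H}^{-1}$ of the fibre is not literally true (there is a $\mathrm{Tor}_1$ obstruction to that injectivity); the correct, standard statement is that the tor-amplitude of a perfect complex can be checked on residue fields, which yields your conclusion all the same. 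Your rank computation via Serre duality and $\deg\L=\tfrac1r\deg\calR=m$ matches the paper's appeal to Riemann--Roch for twisted curves (with the usual care that degrees and Euler characteristics are taken orbifold-theoretically).
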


\begin{proof}
Let $\rho:C\rightarrow S$ be a family in $\M^{[\frac{1}{r}]}$. In the derived category we have the following isomorphisms
\[
L\mathfrak{f}^*\bbL_{\bm{\check{\tau}}} \otimes \omega_{\bm{\rho}}
 \cong  
L\mathfrak{f}^*\big([\check{\bm{\tau}}^*\bbL_{\bm{\check{\alpha}}} \longrightarrow \bbL_{\bm{\check{\beta}}}] \otimes \omega_{\bm{\rho}}\big)
 \cong 
\big[\,\widehat{\bm{\beta}}^* \L^{\SmNeg r} \otimes \omega_{\bm{\rho}}\longrightarrow \widehat{\bm{\beta}}^* \L^{\SmNeg 1}\otimes \omega_{\bm{\rho}} \,\big].
\]
Denote the restriction to $S$ of this complex by 
\begin{align}
E^\bullet = [E_{-1}\overset{\theta}{\longrightarrow} E_0] = [f^*\omega_{X} \otimes \O_C \overset{\mathrm{id}\otimes\sigma^{r-1}}{\longrightarrow} f^*\omega_{X} \otimes L^{r-1}] \label{theta_complex_definition}
\end{align}
with the last equality following from lemma \ref{global_sections_power_lemma}. Let $M$ be a line bundle on $C$ which is ample on the fibres of $\rho$. Then for sufficiently large $n$ we have for each $E_i$ the following properties:
\begin{enumerate}
\item\label{natural_map_surjective} $\rho^* \rho_*( E_i\otimes M^{n})\otimes M^{\SmNeg n} \longrightarrow E_i$ is surjective.
\item $R^1\rho_* E_i\otimes M^{n} = 0$.
\item\label{pull_push_vanishing property} For all $z\in S$ we have $H^0(C_z,  \rho^* \rho_*( E_i\otimes M^{n})\otimes M^{\SmNeg n}) =0$. 
\end{enumerate}

Denote the locally free sheaf $\rho^* \rho_*( E_0\otimes M^{n}) \otimes M^{-n}$ by $A_{E_0}$ and the associated map from property \ref{natural_map_surjective} above by $a$. Then using the fibre product for modules we have the following commuting diagram with exact rows
\begin{align}
\xymatrix{
0\ar[r] & \ker(a) \ar[r]\ar@{=}[d] & G\ar[r]\ar[d]^{\widetilde{\theta}} & {E_{\SmNeg1}} \ar[r] \ar[d]^\theta& 0\\
0\ar[r] & \ker(a) \ar[r] & A_{E_0} \ar[r]^{a} & {E_0} \ar[r] & 0
}\label{extension_construction_G}
\end{align}
where $G$ also fits into the exact sequence:
\begin{align}
\xymatrix{
0\ar[r] & G \ar[r] & {E_{\SmNeg1}} \oplus A_{E_{0}}\ar[r]^{\hspace{1em}\binom{\SmNeg \theta}{a}}& {E_{0}} \ar[r]& 0.
}\label{kernel_construction_G}
\end{align}
The diagram in (\ref{extension_construction_G}) shows that there is an isomorphism $[G\overset{\widetilde{\theta}}{\longrightarrow} A_{E_0}] \cong [E_{\SmNeg 1} \overset{\theta}{\longrightarrow} E_0]$ in the derived category.\\

The exact sequence in (\ref{kernel_construction_G}) shows that $G$ is locally free and hence the diagram in (\ref{extension_construction_G}) contains only flat modules. Hence for $z\in S$ we may restrict to the fibre $C_z$ and maintain exactness. Then using the snake lemma we have an isomorphism
\[
\ker \theta_z  \cong \ker \widetilde{\theta}_z . 
\]
We claim that $H^0(C_z, \ker \theta_z)=0$. To see this take $s \in H^0(C_z, \ker \theta_z)$ and note that $s \in H^0(C_z, E_{\SmNeg 1})$ and $s$ is in the kernel of $\theta_z$. From (\ref{theta_complex_definition}) we know that $(E_{\SmNeg 1})_z = f_z^*\omega_X$ and $\theta_z = \sigma_z^{r-1}$, so $\theta_z$ only vanishes where $f_z$ is constant. Hence, we let $B\subset C_z$ be the union of components contracted by $f_z$. Then we have $\Supp\,s \subset B$ and $(E_{\SmNeg 1})_z|_B =(f_z^*\omega_X)|_B \cong \O_B$ so we must have $s=0$. Hence, $H^0(C_z, \ker \theta_z)=H^0(C_z, \ker \widetilde{\theta}_z)=0$. \\

From property \ref{pull_push_vanishing property} of the definition of $A_{E_0}$ we have $H^0(C_z, (A_{E_0})_z) \cong 0$. So the following exact sequence shows that $ H^0(C_z, G_z) = 0$:
\[
0 \longrightarrow H^0(C_z, \ker\widetilde{\theta}_z)  \longrightarrow H^0(C_z,G_z)\longrightarrow H^0(C_z,  (A_{\widetilde{L}})_z)
\]
Hence, we have that $R^1\rho_* G$ is locally free and $R\rho_* G \cong [ R^1\rho_* G][-1]$. Moreover $R\rho_* [G\overset{\widetilde{\theta}}{\longrightarrow} A_{\widetilde{L}}] \cong [R^1\rho_* G \overset{R^1\rho_*\widetilde{\theta}}{\longrightarrow} R^1\rho_* A_{\widetilde{L}}][-1]$ is a complex of locally free sheaves concentrated in degree $[0,1]$. \\

The virtual dimension follows immediately from Riemann-Roch for twisted curves (see \cite[\S 7.2]{GW_of_DM}) applied to $E^\bullet$. 
\end{proof}

The space ${\M^{[r]}}$ has a natural perfect obstruction theory following from the construction of \cite{Behrend_GW, li2}. It will suffice to show that the space of $r$-stable maps $\M^r$ has a perfect obstruction theory since by lemma \ref{mod_div_ram_construction} the map $\M^{[r]}\rightarrow \M^r$ is \'{e}tale. Recall the construction of the perfect obstruction theory for the moduli space of stable maps $\M$ is defined via the relative to the forgetful morphism 
\[
\Mbar_g(X,d) \longrightarrow \frakM_g.
\]
It is pointed out in \cite[\S 2.8]{GraVakil} that in the case of relative stable maps the perfect obstruction theory can be constructed relative to the morphism 
\[
\Mbar_g(X,\mu) \longrightarrow \frakM_{g,l(\mu)}\times \calT_X
\]
where $\calT_X$ is the moduli space parameterising the degenerated targets. We have the following two cartesian squares where the bottom arrows are flat:
\[
\xymatrix{
 \Mbar^r_g(X,d) \ar[r]\ar[d]_{\bm{p}^{\mathrm{abs}}} & \Mbar_g(X,d) \ar[d]\\
\frakM^r_g \ar[r] &  \frakM_g
}
\hspace{1.5cm}
\xymatrix{
 \Mbar^r_g(X,\mu) \ar[r]\ar[d]_{\bm{p}^{\mathrm{rel}}} & \Mbar_g(X,\mu) \ar[d]\\
\frakM^r_{g,l(\mu)}\times \calT_X \ar[r] &  \frakM_{g,l(\mu)}\times \calT_X
}
\]
We let $\bm{p}:\M^r \rightarrow \calX$ be one of $\bm{p}^{\mathrm{abs}}$ or $\bm{p}^{\mathrm{rel}}$ maps depending on the choice of $\M^r$. Then we have a natural perfect relative obstruction for $\bm{p}$ by pulling back via $\M^r \rightarrow \M$:
\[
\phi_{\bm{p}}: E^\bullet_{\bm{p}} \longrightarrow \bbL_{\bm{p}}.
\]
Then a the perfect obstruction theory for $\M^r$ is given by the following cone construction:
\[
\xymatrix{
E_{\bm{p}}^\bullet[-1] \ar[r] \ar[d]^{\phi_{{\bm{p}}}[-1]} &  {\bm{p}}^* \bbL_{\calX}   \ar[r] \ar@{=}[d] &F_{\M^m}^\bullet  \ar[r] \ar[d]^\phi & E_{\bm{p}}^\bullet \ar[d]^{\phi_{{\bm{p}}}}  \\
\bbL_{{\bm{p}}}[-1] \ar[r] & {\bm{p}}^* \bbL_{\calX}  \ar[r] & \bbL_{\M^r}  \ar[r] & \bbL_{{\bm{\nu}}}
}
\]

\begin{corollary}
(Theorem \ref{pot_theorem}) If $g=0$ there is a perfect obstruction theory for $\M^{[1/r]}$ giving virtual dimension $m$. Moreover, since $\M^{[1/r]}\rightarrow \M^{1/r}$ is \'{e}tale in genus $0$, there is a perfect obstruction theory for $\M^{1/r}$. 
\end{corollary}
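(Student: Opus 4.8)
The plan is to produce the absolute obstruction theory on $\M^{[1/r]}$ by composing the perfect relative obstruction theory $\phi_{\bm{\nu}}\colon E^{\bullet}_{\bm{\nu}}:=R\bm{\rho}_{*}(L\mathfrak{f}^{*}\bbL_{\bm{\check{\tau}}}\otimes\omega_{\bm{\rho}})[1]\to\bbL_{\bm{\nu}}$ of the preceding theorem with the natural obstruction theory of $\M^{[r]}$, using the same cone construction that was carried out above for $\M^{r}$ (cf. \cite{Intrinsic}), and then transporting the result to $\M^{1/r}$ along an \'{e}tale map.

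First I would observe that when $g=0$ the stack $\M^{[r]}$ is smooth. Since $\M^{r}=\M\times_{\frakM}\frakM^{r}$ (resp. the analogous fibre product in the relative case), the natural morphism from $\M^{r}$ to its smooth base $\frakM^{r}_{0}$ (resp. $\frakM^{r}_{0,l(\mu)}\times\calT_X$) is the base change along a flat map of the smooth morphism $\Mbar_{0}(X,d)\to\frakM_{0}$ (resp. $\Mbar_{0}(X,\mu)\to\frakM_{0,l(\mu)}\times\calT_X$), hence smooth; so $\M^{r}$ is smooth, and since $\M^{[r]}\to\M^{r}$ is \'{e}tale by lemma \ref{mod_div_ram_construction}, $\M^{[r]}$ is smooth of dimension $rm$ and we may take $\Omega_{\M^{[r]}}\cong\bbL_{\M^{[r]}}$, concentrated in degree $0$, as its obstruction theory. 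I would then set $\mathbb{F}^{\bullet}:=\mathrm{cone}\big(E^{\bullet}_{\bm{\nu}}[-1]\to\bm{\nu}^{*}\Omega_{\M^{[r]}}\big)$, the map being $\phi_{\bm{\nu}}[-1]$ followed by the connecting morphism $\bbL_{\bm{\nu}}[-1]\to\bm{\nu}^{*}\bbL_{\M^{[r]}}=\bm{\nu}^{*}\Omega_{\M^{[r]}}$ of the cotangent triangle of $\bm{\nu}$; the octahedral axiom then furnishes a morphism $\mathbb{F}^{\bullet}\to\bbL_{\M^{[1/r]}}$ compatible with $\phi_{\bm{\nu}}$ and $\mathrm{id}_{\Omega_{\M^{[r]}}}$. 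Taking the cohomology long exact sequence of this cone, the summand $\bm{\nu}^{*}\Omega_{\M^{[r]}}$ sits in degree $0$ while $E^{\bullet}_{\bm{\nu}}$ has amplitude $[-1,0]$, so $\calH^{i}(\mathbb{F}^{\bullet})=0$ for $i\notin\{-1,0\}$; a cone of perfect complexes being perfect, $\mathbb{F}^{\bullet}$ is perfect of amplitude $[-1,0]$, and the verification that $\mathbb{F}^{\bullet}\to\bbL_{\M^{[1/r]}}$ is an obstruction theory is the cohomological diagram chase already used for $\phi_{\bm{\tau}}$ in lemma \ref{relative_obstruction_theory_diagram_lemma}. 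Finally $\mathrm{rank}\,\mathbb{F}^{\bullet}=\mathrm{rank}\,\Omega_{\M^{[r]}}+\mathrm{rank}\,E^{\bullet}_{\bm{\nu}}=rm+(1-r)m=m$, so the virtual class has dimension $m$.

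For the assertion about $\M^{1/r}$, the composite $q\colon\M^{[1/r]}\to\M^{(1/r)}\to\M^{1/r}$ is, in genus $0$, surjective and \'{e}tale (the first arrow is \'{e}tale of degree $r$, the second is \'{e}tale as recorded after definition \ref{main_moduli_spaces_definitions}), whence $\bbL_{q}=0$ and $\bbL_{\M^{[1/r]}}\cong q^{*}\bbL_{\M^{1/r}}$. The universal curve over $\M^{[1/r]}$, together with $\omega_{\bm{\rho}}$, $\bm{f}^{*}\omega_{X}$ and $\calR$, is pulled back from $\M^{1/r}$, while $\mathbb{F}^{\bullet}$ and its morphism to $\bbL_{\M^{[1/r]}}$ are manufactured functorially out of these and the universal triple $(\L,\bm{e},\bm{\sigma})$. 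I would then check that $\mathbb{F}^{\bullet}$ is independent of the $\mu_{r}$-ambiguity in $(\L,\bm{e},\bm{\sigma})$ and hence carries an \'{e}tale descent datum for $q$; it would then descend to a perfect complex $\mathbb{G}^{\bullet}$ on $\M^{1/r}$ equipped with a morphism $\mathbb{G}^{\bullet}\to\bbL_{\M^{1/r}}$ pulling back to the one above, and since $q$ is faithfully flat, both perfectness and the obstruction-theory property are fppf-local, so $\mathbb{G}^{\bullet}\to\bbL_{\M^{1/r}}$ is the desired perfect obstruction theory of virtual dimension $m$. This descent is the step I expect to be the main obstacle, precisely because $\L$ and $\bm{\sigma}$ do not themselves descend to $\M^{1/r}$; the cleanest route is probably to rerun the entire cone construction \'{e}tale-locally over $\M^{1/r}$, where local choices of $(\L,\bm{e},\bm{\sigma})$ are available, and to verify that the resulting local obstruction theories agree on overlaps.
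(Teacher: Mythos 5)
Your proposal is correct and follows essentially the same route as the paper: the cone of $\phi_{\bm{\nu}}[-1]$ composed with the connecting map into $\bm{\nu}^{*}\bbL_{\M^{[r]}}$ yields the obstruction theory, with the same cohomological diagram chase and the same dimension count $rm+(1-r)m=m$. You are in fact more explicit than the paper on the two points it leaves implicit, namely that $g=0$ is used to make $\M^{[r]}$ smooth (so $\bbL_{\M^{[r]}}$ is perfect in degree $0$ and the cone has amplitude $[-1,0]$) and that the transfer to $\M^{1/r}$ requires an \'etale descent argument.
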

\begin{proof}
Let $E^\bullet = R\bm{\rho}_* (L\mathfrak{f}^*\bbL_{\bm{\check{\tau}}} \otimes \omega_{\bm{\rho}} )$. Then there the following is a commutative diagram with distinguished triangles for rows:
\[
\xymatrix{
E^\bullet[-1] \ar[r] \ar[d]^{\phi_{{\bm{\nu}}}[-1]} &  {\bm{\nu}}^* \bbL_{\M^{[r]}}   \ar[r] \ar@{=}[d] &F^\bullet  \ar[r] \ar[d]^\phi & E^\bullet \ar[d]^{\phi_{{\bm{\nu}}}}  \\
\bbL_{{\bm{\nu}}}[-1] \ar[r] & {\bm{\nu}}^* \bbL_{\M^{[r]}}  \ar[r] & \bbL_{\M^{[1/r]}}  \ar[r] & \bbL_{{\bm{\nu}}}
}
\]
Here $F^\bullet$ is defined via the cone construction. As before we have an exact sequence of cohomology of the cones:
\[\xymatrix@R=0.75em{
&\calH^{\SmNeg1}(\mathrm{cone}( \mathrm{id} ))\ar[r] & \calH^{\SmNeg1}(\mathrm{cone}(\phi)) \ar[r] &\calH^{\SmNeg1}(\mathrm{cone}(\phi_{{\bm{\nu}}})) \\
\ar[r] &\calH^{0}(\mathrm{cone}( \mathrm{id} ))\ar[r] & \calH^{0}(\mathrm{cone}(\phi)) \ar[r] &\calH^{0}(\mathrm{cone}(\phi_{{\bm{\nu}}})).
}
\]
Which shows that $\calH^{\SmNeg}(\mathrm{cone}(\phi)) = \calH^{0}(\mathrm{cone}(\phi)) = 0$.
\end{proof}

\end{document}